\newcommand{\eqnum}{\refstepcounter{equation}\textup{\tagform@{\theequation}}}
\numberwithin{equation}{section}
\newtheorem*{rep@theorem}{\rep@title}
\newcommand{\newreptheorem}[2]{%
\newenvironment{rep#1}[1]{%
 \def\rep@title{#2 \ref{##1}}%
 \begin{rep@theorem}}%
 {\end{rep@theorem}}}
\newtheorem{thm}{Theorem}[section]
\newtheorem{prop}[thm]{Proposition} 
\newtheorem{lem}[thm]{Lemma}
\newtheorem{cor}[thm]{Corollary}
\theoremstyle{definition}
\newtheorem{dfn}[thm]{Definition}
\newtheorem{exmpl}[thm]{Example}
\newtheorem{?}[thm]{Question}
\newtheorem*{b?}{Question}
\newtheorem*{prob*}{Problem}
\theoremstyle{remark}
\newtheorem{rmk}[thm]{Remark}
\newcommand{\HOM}{\mathbb{H}\text{om}}
\newcommand{\FR}{\mathfrak}
\newcommand{\ds}{\displaystyle}
\newcommand{\tql}{\textquotedblleft}
\newcommand{\tqr}{\textquotedblright}
\newcommand{\noin}{\noindent}
\newcommand{\mc}{\mathcal}
\newcommand{\mb}{\mathbb}
\newcommand{\ms}{\mathscr}
\newcommand{\floor}[1]{\lfloor #1 \rfloor}
\begin{document}
\title[Ultraproduct embeddings and amenability]{On ultraproduct embeddings and amenability for tracial von Neumann algebras}

\author{Scott Atkinson and Srivatsav Kunnawalkam Elayavalli}

\address{University of California Riverside, Riverside, CA, USA}
\email{scott.atkinson@ucr.edu}
\address{Vanderbilt University, Nashville, TN, USA}
\email{srivatsav.kunnawalkam.elayavalli@vanderbilt.edu}

\begin{abstract}
We define the notion of \emph{self-tracial stability} for tracial von Neumann algebras and show that a tracial von Neumann algebra satisfying the Connes Embedding Problem is self-tracially stable if and only if it is amenable. We then generalize a result of Jung by showing that a separable tracial von Neumann algebra that satisfies the Connes Embedding Problem is amenable if and only if any two embeddings into $R^\mc{U}$ are ucp-conjugate. Moreover we show that for a II$_1$ factor $N$ satisfying CEP, the space $\mb{H}$om$(N, \prod_{k\to \mc{U}}M_k)$ of unitary equivalence classes of embeddings is separable if and only $N$ is hyperfinite. This resolves a question of Popa for Connes embeddable factors. These results hold when we further ask that the pairs of embeddings commute, admitting a nontrivial action of $\text{Out}(N\otimes N)$ on $\HOM(N\otimes N, \prod_{k\to \mc{U}}M_k)$ whenever  $N$ is non-amenable.  We also obtain an analogous result for commuting sofic representations of countable sofic groups.
\end{abstract}

\maketitle


The problem of identifying amenability in von Neumann algebras has received an immense amount of attention going back to the beginning of the subject.  Indeed, the first result of this kind was Murray-von Neumann's famous proof that the hyperfinite II$_1$-factor $R$ is not isomorphic to the free group factor $L(\mb{F}_2)$ using Property $\Gamma$ (see \S6 of \cite{mvn4}).  Likewise, the subject of ultraproduct analysis for tracial von Neumann algebras has a deep and rich history dating back to the early years of the subject with McDuff's article \cite{mcduff} making the value of such techniques readily apparent.  Connes's seminal paper \cite{connes} makes use of such ultraproduct considerations to establish the groundbreaking classification of injective von Neumann algebras.  Throughout the history of the subject, problems in von Neumann algebras involving amenability and ultraproducts have steadily maintained momentum, interest, and relevance.

In this article, we identify multiple new characterizations of amenability for tracial von Neumann algebras in the context of embeddings into ultraproducts. These results are obtained under the assumption that the algebra in question satisfies the Connes Embedding Problem (CEP):

\begin{prob*}[CEP]
Let $N$ be a separably acting tracial von Neumann algebra, $R$ be the separably acting amenable II$_1$-factor, and $\mc{U}$ be a free ultrafilter on $\mb{N}$. Does there exist an embedding $N \rightarrow R^\mc{U}$? 
\end{prob*}

\noin An affirmative resolution of this problem was famously conjectured in \cite{connes}. Recently, in \cite{connessol} Ji-Natarajan-Vidick-Wright resolved CEP in the negative.  This means that our assumption that a tracial von Neumann algebra satisfies CEP is nontrivial.

For the first characterization of this paper, we introduce the idea of \emph{self-tracial stability} for tracial von Neumann algebras together with related notions.  A tracial von Neumann algebra $(N,\tau)$ is self-tracially stable if maps $N \rightarrow N$ that are approximate $*$-homomorphisms are near honest $*$-homomorphisms; self-tracial stability is a specific instance of tracial stability---see Definition \ref{ssdef}. It is a direct consequence of a well-known result that amenability implies self-tracial stability. We prove the following theorem.

\begin{repthm}{tscon} 
Let $(N,\tau)$ be a separably acting tracial von Neumann algebra satisfying CEP. Then $N$ is self-tracially stable if and only if $N$ is amenable.
\end{repthm}

\noin This resolves Question 1.8 of \cite{sellin} modulo CEP.  It follows that self-tracial stability is not axiomatizable (see Proposition \ref{axiom}). One appeal of self-tracial stability is that its definition does not require a CEP assumption.  Thus when we perceive this characterization as a necessary condition of CEP we obtain a strategy for identifying counterexamples of CEP by way of contraposition: if $N$ is self-tracially stable and not amenable, then $N$ does not satisfy CEP. 

The next characterization generalizes a well-known result of Jung.  In \cite{jung}, Jung showed that if $(N,\tau)$ is a finitely generated tracial von Neumann algebra satisfying CEP, then $N$ is amenable if and only if any pair of embeddings into $R^\mc{U}$ are unitarily conjugate. We will call this result \emph{Jung's theorem}.  For the same reason as in the previous paragraph, the \tql only if\tqr direction of Jung's theorem is a direct exercise.  Jung's strategy utilizes the finitary notion of \emph{(quasi)tubularity}--a condition on the unitary orbits of matrices that tracially approximate the generators of $N$. (It is important to note that Hadwin's \emph{dimension ratio} for the generators of a tracial von Neumann algebra is a quantified predecessor of tubularity--see item (6) of Theorem 3.3 in \cite{hadwin}.)
The concept of (quasi)tubularity is the mechanism one uses to pivot from unitary equivalence of embeddings to semidiscreteness (i.e. amenability). The generalization appearing in this article is based on the notion of ucp-conjugation. Two embeddings $\pi, \rho: N\to R^{\mc{U}}$ are ucp-conjugate if there exists a sequence of unital completely positive maps $\varphi_k: R\to R$ that assemble to conjugate $\pi$ and $\rho$ (see Definition \ref{wms}). The result is as follows.

\begin{repthm}{jungv2} 
Let $(N,\tau)$ be a separably acting tracial von Neumann algebra satisfying CEP. Then any pair of embeddings of $N$ into $R^\mc{U}$ are ucp-conjugate if and only if $N$ is injective.  
\end{repthm} 

\noin Note that we have done away with the assumption that $N$ is finitely generated.  To prove the above theorem, we adapt Jung's tubularity for the ucp setting, calling the concept \emph{complete tubularity} (see Definition \ref{tube}).  In the process of defining complete tubularity, we take care to expand its scope to apply to infinitely many generators. Complete tubularity in conjunction with an unpublished characterization of injectivity by Kishimoto (see Proposition \ref{kishimoto}) serves as the link from ucp-conjugacy of embeddings to injectivity.

This characterization of injectivity in terms of ucp-conjugacy sheds light on a question of Popa  appearing in the third paragraph of \S\S2.3 of \cite{popa}  regarding unitary equivalence classes of embeddings of tracial von Neumann algebras into an arbitrary ultraproduct of II$_1$-factors $\ds \prod_{k\rightarrow \mc{U}}M_k$. Popa's question is as follows.

\begin{b?}[\cite{popa}]
If $N$ is a separable von Neumann subalgebra of an ultraproduct II$_1$-factor $\ds\prod_{k\rightarrow \mc{U}}M_k$, then how large is the space of embeddings of $N$ into $\ds\prod_{k\rightarrow \mc{U}}M_k$ modulo unitary equivalence?
\end{b?}

\noin In the appendix of \cite{topdyn}, Ozawa showed that the space of unitary equivalence classes of embeddings of a separably acting II$_1$-factor $N$ into $R^\mc{U}$ is separable if and only if $N$ is amenable. Evidently, this result served as inspiration for Popa's question. One must take care in considering the general case of an ultraproduct codomain $\ds\prod_{k\rightarrow \mc{U}}M_k$. The obstruction is that there are more unitaries available in $\ds\prod_{k\rightarrow \mc{U}}M_k$ that could \emph{a priori} coarsen the space of unitary equivalence classes. As an immediate consequence of Theorem \ref{jungv2}, we obtain the following:

\begin{repcor}{jungv2cor} 
Let $(N,\tau)$ be a non-amenable separably acting tracial von Neumann algebra satisfying CEP.  Then there are at least two distinct embeddings of $N$ into an arbitrary ultraproduct of II$_1$-factors $\ds \prod_{k\rightarrow \mc{U}}M_k$ up to unitary conjugation.
\end{repcor} 

\noin This is a first step in answering Popa's question in full generality. To completely resolve Popa's question, we apply techniques from \cite{saa} and obtain the following theorem.

\begin{repthm}{mcd}
Given a separably acting finite von Neumann algebra $N$ satisfying CEP, the space of unitary equivalence classes of embeddings of $N$ into an ultraproduct of II$_1$-factors $\ds\prod_{k\rightarrow \mc{U}}M_k$ is separable if and only if $N$ is amenable.
\end{repthm}

Using the self-absorbing nature of $R$, we extend the above results to the setting where we further insist that the embeddings commute (Theorem \ref{mr}).  This result has applications to topics including weak approximate unitary equivalence of commuting embeddings into separable factors, the action of $\text{Out}(N\otimes N)$ on the space $\HOM(N\otimes N, \prod_{k\to \mc{U}} M_k)$, and the so-called ultra ucp lifting property (see \S\S\ref{comem}).  We also obtain an analogous result for commuting sofic representations of countable sofic groups.  While there is no self-absorbing behavior available to exploit in the sofic setting, we show how the techniques of Elek-Szabo from \cite{elekszabo} in their proof of Jung's theorem for sofic groups apply to produce the desired result (see \S\S\ref{comsof}). 

We conclude with comments and questions regarding some notions from the paper in the group setting. With its official debut in Hadwin--Shulman's 2018 article (2016 preprint) \cite{hadshu}, tracial stability is a fairly young topic in operator algebras. Analogous notions of stability in group theory have a longer history.  Given a class of groups $\mc{G}$ equipped with bi-invariant metrics, a group $G$ is \emph{$\mc{G}$-stable} if approximate homomorphisms of $G$ into members of $\mc{G}$ are near honest homomorphisms (see Definition \ref{gpstab}). Some examples of a class $\mc{G}$ commonly considered include \[\text{HS}:= \left\{\mc{U}(\mb{M}_n), \text{normalized Hilbert-Schmidt norm}\right\}_{n \in \mb{N}}\] and \[\mc{P}:= \left\{ n\times n \text{ permutation matrices, Hamming metric}\right\}_{n\in \mb{N}}.\] Thom's ICM survey \cite{thom} discusses recent results, applications, and questions related to stability for groups. Some other notable group theoretic references on this topic include  \cite{arzpau, hadshu2, deglluth, beluth, beclub}.

\subsection*{Acknowledgments} The authors gratefully acknowledge D. Bisch, I. Farah, A. Levit, L. P\u{a}unescu, J. Peterson, and P. Spaas for helpful conversations about these results.  We also thank B. Hayes, B. Nelson, and D. Sherman for useful discussions and careful proofreading. Special gratitude is due to A. Ioana and N. Ozawa.

\section{Preliminaries}\label{prelim}

\subsection{Amenability}\label{amen}

We begin this section with a discussion of amenability for von Neumann algebras.  

\begin{dfn}\label{amendef}\hspace*{\fill}

\begin{enumerate}

\item A von Neumann algebra $N$ is \emph{amenable} if any derivation $\delta$ of $N$ into a dual Banach bimodule $X$ is inner.  That is, any map $\delta: N \rightarrow X$ satisfying $\delta(ab) = a\delta(b) + \delta(a)b$ for every $a,b \in N$ must be of the form $\delta(a) = ax-xa$ for some $x \in X$.

\item  A von Neumann algebra $N$ is \emph{hyperfinite} if it can be expressed as the $\sigma$-weak closure of an increasing union of finite-dimensional subalgebras.

\item A von Neumann algebra $N$ is \emph{injective} if for any inclusion $X \subset Y$ of operator systems and ucp map $\varphi: X \rightarrow N$, there exists a ucp map $\tilde{\varphi}: Y \rightarrow N$ such that $\tilde{\varphi}|_X = \varphi$.

\item A von Neumann algebra $N$ is \emph{semidiscrete} if there is a pair of nets of ucp maps $\varphi_\alpha: N \rightarrow \mb{M}_{n(\alpha)}, \psi_\alpha: \mb{M}_{n(\alpha)} \rightarrow N$ such that $\psi_\alpha \circ \varphi_\alpha \rightarrow \text{id}_N$ in the point-ultraweak topology.

\end{enumerate}

\end{dfn}

\noin It is well-known that the four conditions in Definition \ref{amendef} are equivalent.  This equivalence is largely due to the results from \cite{connes}, but we should also mention \cite{choeff,wass,newproof} when discussing this result.  There are more conditions well-known to be equivalent to the above four, but for the purposes of this article, we only consider these four. When discussing results regarding amenability/hyperfiniteness/injectivity/\linebreak semidiscreteness we will use the most relevant term; for instance, if an argument's conclusion is that an algebra satisfies the conditions for injectivity, we will call the algebra injective.  When the context is general, we will use the term \emph{amenable} as the umbrella term to refer to an algebra satisfying these equivalent conditions.  

\begin{exmpl}
The most well-known example of an amenable von Neumann algebra is the separable hyperfinite II$_1$-factor, denoted by $R$.  Murray-von Neumann showed in \cite{mvn4} that $R$ is the unique separable hyperfinite II$_1$-factor.  To sketch a construction, consider the infinite tensor product $\ds \bigotimes_\mb{N} \mb{M}_2$.  Using the unique tracial state, form a GNS representation of $\ds \bigotimes_\mb{N} \mb{M}_2$ and take the bicommutant.
\end{exmpl}

%
%
%
%
%
%

This paper focuses on separably acting tracial von Neumann algebras.    A \emph{tracial} von Neumann algebra is given by a pair $(N,\tau)$ where $N$ is a von Neumann algebra and $\tau$ is a faithful normal tracial state on $N$.  Given two tracial von Neumann algebras $(N,\tau), (M,\sigma)$, an \emph{embedding} of $(N,\tau)$ into $(M,\sigma)$ is an injective unital $*$-homomorphism $\pi: (N,\tau) \rightarrow (M, \sigma)$ such that $\sigma \circ \pi = \tau$.  Given a tracial von Neumann algebra $(N,\tau)$, we often consider the \emph{trace-norm} $||\cdot||_2$ on $N$ induced by $\tau$ defined as follows.  For $x \in N, ||x||_2 = \tau(x^*x)^\frac{1}{2}$.  Given $\varepsilon > 0$ and two subsets $A,B \subset N$, the notation $A \subset_{||\cdot||_2,\varepsilon} B$ denotes the condition that for every $a \in A$ there is a $b \in B$ such that $||a-b||_2 < \varepsilon$.

We now present some lesser-known characterizations of amenability for separably acting tracial von Neumann algebras. The following proposition from \cite{jung} provides a useful finitary viewpoint of semidiscreteness:
\begin{prop}[\cite{jung}]\label{jungslem}
Let $(N,\tau)$ be a tracial von Neumann algebra.  We have that $N$ is semidiscrete if and only if for any finite subset $F \subset N$ and any $\varepsilon > 0$, there exist a tracial von Neumann algebra $(N_\varepsilon,\tau_\varepsilon)$, an embedding $\pi_\varepsilon: (N,\tau) \hookrightarrow (N_\varepsilon,\tau_\varepsilon)$, and a unital finite-dimensional subalgebra $A_\varepsilon \subset N_\varepsilon$ such that \[\pi_\varepsilon(F) \subset_{||\cdot||_2, \varepsilon} A_\varepsilon.\]
\end{prop}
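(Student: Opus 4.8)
The plan is to establish the two directions by separate standard means: the forward implication reduces immediately to hyperfiniteness, while the content of the reverse implication is a \emph{double conditional expectation} argument that manufactures the semidiscrete factorization through matrix algebras. Throughout I will use the equivalences recorded after Definition \ref{amendef}, the existence of trace-preserving conditional expectations onto von Neumann subalgebras of a finite algebra, and the fact that such expectations are unital, completely positive, and $||\cdot||_2$-contractive.

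For the forward direction, suppose $N$ is semidiscrete; then $N$ is hyperfinite, so we may write $N$ as the $\sigma$-weak closure of an increasing union $\bigcup_k A_k$ of finite-dimensional unital subalgebras. Fixing a finite set $F \subset N$ and $\varepsilon > 0$, Kaplansky's density theorem makes $\bigcup_k A_k$ dense in $N$ in $||\cdot||_2$, so each element of $F$ lies within $\varepsilon$ of $\bigcup_k A_k$; since the $A_k$ increase and $F$ is finite, a single $k$ satisfies $F \subset_{||\cdot||_2, \varepsilon} A_k$. Taking $(N_\varepsilon, \tau_\varepsilon) = (N, \tau)$, $\pi_\varepsilon = \text{id}_N$, and $A_\varepsilon = A_k$ furnishes the required data, so the extra freedom to pass to a larger algebra is not even needed here.

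For the converse, assume the finitary condition and fix $F$ and $\varepsilon$, with $\pi_\varepsilon : N \hookrightarrow N_\varepsilon$ and finite-dimensional $A_\varepsilon \subset N_\varepsilon$ as provided. The key point is that $N_\varepsilon$ is finite, so we have \emph{two} trace-preserving conditional expectations: $E_A : N_\varepsilon \to A_\varepsilon$ onto the finite-dimensional subalgebra, and $E_N : N_\varepsilon \to \pi_\varepsilon(N)$ onto the image of $N$, which is $\sigma$-weakly closed because the trace-preserving map $\pi_\varepsilon$ is automatically normal. I would then set $\varphi := E_A \circ \pi_\varepsilon : N \to A_\varepsilon$ and $\psi := \pi_\varepsilon^{-1} \circ E_N|_{A_\varepsilon} : A_\varepsilon \to N$, both ucp. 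For $x \in F$, choosing $a \in A_\varepsilon$ with $||\pi_\varepsilon(x) - a||_2 < \varepsilon$ and using that $E_A$ fixes $a$, that $E_N$ fixes $\pi_\varepsilon(x)$, that both expectations are $||\cdot||_2$-contractive, and that $\pi_\varepsilon$ is $||\cdot||_2$-isometric, a short triangle-inequality estimate gives $||\psi\varphi(x) - x||_2 < 2\varepsilon$. To reach the exact form of Definition \ref{amendef}(4), write $A_\varepsilon \cong \bigoplus_i \mb{M}_{n_i}$ and embed it block-diagonally into $\mb{M}_m$ with $m = \sum_i n_i$; the identity on $A_\varepsilon$ factors completely positively through $\mb{M}_m$ via this inclusion and the expectation onto the block-diagonal, so $\psi\varphi$ factors through $\mb{M}_m$ with the same error. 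Letting $(F,\varepsilon)$ range over the directed set of pairs yields a net of ucp maps $N \to \mb{M}_m \to N$ whose composites converge to $\text{id}_N$ pointwise in $||\cdot||_2$; as these composites are uniformly contractive, the convergence is in particular point-ultraweak, which is semidiscreteness.

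I expect the only genuine subtlety to be the reverse direction's use of the expectation $E_N$ onto $\pi_\varepsilon(N)$: this is the device that transports the finite-dimensional approximation living in the ambient $N_\varepsilon$ back into $N$ itself, and it is exactly what upgrades the a priori weaker \tql embed and approximate\tqr\ condition to honest semidiscreteness. This step leans essentially on the finiteness of $N_\varepsilon$; by contrast, the forward direction and the matrix-algebra bookkeeping are routine.
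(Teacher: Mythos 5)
Your proof is correct, and it is essentially the argument of the cited source: the paper itself gives no proof of this proposition, deferring to \cite{jung}, and your two-conditional-expectation composition $\pi_\varepsilon^{-1}\circ E_{\pi_\varepsilon(N)}\circ E_{A_\varepsilon}\circ \pi_\varepsilon$ together with the block-diagonal factorization of $A_\varepsilon$ through $\mb{M}_m$ is exactly the standard route Jung takes (with the forward direction resting, as in Jung, on semidiscrete $\Rightarrow$ hyperfinite). Your version even dispenses with Jung's generating-set hypothesis directly, which is precisely the ``reduction'' the authors allude to after the statement.
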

\noin Note that the version proved in \cite{jung} assumes that $F$ is a generating set, but after a reduction, Jung's proof establishes the above statement.  Thus we do not rely on an assumption of $N$ being finitely generated.

The following proposition is the crucial step in the proof of Theorem \ref{jungv2}. 
The result in Proposition \ref{kishimoto} is not new. It is an unpublished result of Kishimoto. The proof recorded here was communicated to us by N. Ozawa to whom we extend our sincere gratitude.

\begin{prop}[Kishimoto]\label{kishimoto}
Let $(N,\tau)$ be a tracial von Neumann algebra.  We have that $N$ is injective if and only if for any finite subset $F \subset (N)_{\leq 1}$ and any $\varepsilon > 0$, there exist $J \in \mb{N}$ and a ucp map $\rho: \mb{M}_J \rightarrow N$ such that \[F \subset_{||\cdot||_2, \varepsilon} \rho((\mb{M}_J)_{\leq 1}).\]
\end{prop}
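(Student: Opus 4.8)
The plan is to prove the equivalent statement with \emph{injective} replaced by \emph{semidiscrete}, using the equivalence of the four conditions recorded after Definition \ref{amendef}; the displayed condition should then be recognized as a one-sided form of the completely positive approximation property, where only the ``decoding'' maps $\mb{M}_J \to N$ are supplied and the ``encoding'' maps must be manufactured. For the forward direction, assume $N$ is semidiscrete, so there are nets of ucp maps $\varphi_\alpha: N \to \mb{M}_{n(\alpha)}$ and $\psi_\alpha: \mb{M}_{n(\alpha)} \to N$ with $\psi_\alpha\circ\varphi_\alpha \to \mathrm{id}_N$ point-ultraweakly. Since the set of ucp maps $N\to N$ factoring through a matrix algebra is convex (a convex combination of such maps is realized by a block-diagonal encoder into a larger matrix algebra followed by an averaging decoder), a Mazur-type convexity argument upgrades this to point-$||\cdot||_2$ convergence along convex combinations. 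Given $F\subset(N)_{\leq1}$ and $\varepsilon>0$, passing to such a combination yields ucp $\varphi: N\to\mb{M}_n$ and $\rho:\mb{M}_n\to N$ with $||\rho(\varphi(x))-x||_2<\varepsilon$ for all $x\in F$; as $\varphi$ is contractive, $\varphi(x)\in(\mb{M}_n)_{\leq1}$, giving $F\subset_{||\cdot||_2,\varepsilon}\rho((\mb{M}_n)_{\leq1})$.

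For the converse I would build the missing encoder as a trace-adjoint. The decoder $\rho:\mb{M}_J\to N$ supplied by the hypothesis need not preserve the trace, so the first step is a reduction: writing the state $m\mapsto\tau(\rho(m))$ as $\mathrm{tr}_J(h\,m)$ for a density $h\geq0$ and approximating the eigenvalues of $h$ by rationals, I replace $\mb{M}_J$ by a larger matrix algebra into which it embeds unitally with multiplicities matching $h$, and precompose $\rho$ with the trace-preserving conditional expectation onto the image. This produces a ucp map, now trace-preserving up to an arbitrarily small error absorbed into $\varepsilon$, for which the image condition still holds. For trace-preserving ucp $\rho$ the Hilbert-space adjoint $\varphi:=\rho^*$ with respect to the trace inner products maps $L^2(N)\to L^2(\mb{M}_J)=\mb{M}_J$ and restricts to a unital completely positive map $N\to\mb{M}_J$; moreover both $\rho$ and $\varphi$ are $L^2$-contractions, so $\rho\varphi=\rho\rho^*$ is a positive contraction on $L^2(N)$.

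The heart of the argument is then a computation on unitaries, where the operator-norm unit ball and the $L^2$ unit sphere coincide. Given a finite set of unitaries and $\varepsilon>0$, apply the hypothesis to obtain $\rho$ and, for each unitary $u$, an $m_u\in(\mb{M}_J)_{\leq1}$ with $||\rho(m_u)-u||_2<\varepsilon$. Since $||u||_2=1$, the identity $\langle\rho^*u,m_u\rangle=\langle u,\rho(m_u)\rangle$ together with Cauchy-Schwarz and $||m_u||_2\leq1$ forces $||\rho^*u||_2>1-\varepsilon$; as $||\rho^*u||_2\leq||u||_2=1$, we get $\langle(1-\rho\rho^*)u,u\rangle<2\varepsilon$, and because $0\leq1-\rho\rho^*\leq1$ this yields $||u-\rho\rho^*u||_2<\sqrt{2\varepsilon}$. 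Thus $\rho$ and $\varphi=\rho^*$ give a two-sided point-$||\cdot||_2$ approximation of the identity on unitaries. Writing an arbitrary element of $(N)_{\leq1}$ as a combination of four unitaries, collecting the at most $4|F|$ unitaries arising in such decompositions and applying the hypothesis to them simultaneously, linearity and the triangle inequality extend the approximation from unitaries to an arbitrary finite $F\subset N$. This is exactly the two-sided completely positive approximation property characterizing semidiscreteness, hence injectivity.

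I expect the main obstacle to be the mismatch between the two norms in play: the image condition is phrased with the \emph{operator-norm} unit ball of $\mb{M}_J$, while the adjoint construction is governed by the $L^2$ (trace-norm) geometry, and a naive Cauchy-Schwarz estimate for a general element $x$ only gives the weak bound $||\rho^*x||_2\geq||x||_2^2-\varepsilon$, which is useless when $||x||_2<1$. Restricting to unitaries, where $||\cdot||_{\mathrm{op}}=||\cdot||_2=1$, is precisely what makes the estimate tight and drives the whole converse. The trace-preservation reduction needed to make $\rho^*$ completely positive is a secondary technical point, but it must be carried out carefully so as not to disturb the image condition.
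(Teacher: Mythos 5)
Your forward direction (semidiscreteness plus a Mazur-type convexity upgrade to point-$\|\cdot\|_2$ approximation) is fine, and the skeleton of your converse --- pair the hypothesized decoder $\rho$ with its trace-adjoint $\rho^*$, get a tight estimate on unitaries where $\|u\|_2=\|u\|_{\mathrm{op}}=1$, then spread to all of $(N)_{\leq 1}$ by writing contractions as combinations of unitaries --- is a legitimate alternative strategy. But there is a genuine gap at exactly the point you describe as a ``secondary technical point'': the reduction to a trace-preserving $\rho$. The mechanism you propose cannot work. A unital embedding of the full matrix algebra $\mb{M}_J$ into a larger matrix algebra necessarily has constant multiplicity (the matrix units force the images of the diagonal projections to be mutually Murray--von Neumann equivalent, hence of equal trace), so it pulls the normalized trace of $\mb{M}_{J'}$ back to the normalized trace of $\mb{M}_J$; precomposing $\rho$ with the trace-preserving conditional expectation onto such an image leaves the state $\tau\circ\rho$ completely unchanged. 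There is no unital embedding ``with multiplicities matching $h$.'' Without an honest (sub)traciality reduction you lose the bound $\|\rho\|_{L^2\to L^2}\leq 1$: Kadison--Schwarz only gives $\|\rho(m)\|_2^2\leq \mathrm{tr}_J(h\,m^*m)\leq\|h\|_\infty\,\|m\|_2^2$, and $\|h\|_\infty$ can be as large as $J$ (for instance when $\tau\circ\rho$ is close to a vector state). Then $1-\rho\rho^*$ is no longer a positive contraction and the step from $\langle(1-\rho\rho^*)u,u\rangle<2\varepsilon$ to $\|u-\rho\rho^*u\|_2<\sqrt{2\varepsilon}$ collapses. Any correct repair must both produce a subtracial ucp replacement for $\rho$ and verify that the image condition $F\subset_{\|\cdot\|_2,\varepsilon}\rho((\mb{M}_J)_{\leq1})$ survives the replacement; neither is automatic, and this is where the real work of the converse lies in your approach.

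For comparison, the paper's proof sidesteps the issue entirely by never taking an adjoint of $\rho$. It uses Lemma \ref{Klem} to replace the approximating contractions by unitaries of $\mb{M}_J$, Stinespring-dilates $\rho$ to a $*$-homomorphism $\pi:\mb{M}_J\to\mb{M}_L(N)$, and averages $W^*\pi(v^*)(x\otimes 1_L)\pi(v)W$ over the Haar measure on $\mc{U}(\mb{M}_J)$ to produce ucp maps on $B(L^2(N,\tau))$ that restrict to the identity on $N'$ and asymptotically commute with the unitaries of $N$; a point-ultraweak limit point of this net is then a conditional expectation onto $N'$, so $N'$ (hence $N$) is injective. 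That argument requires no compatibility whatsoever between $\rho$ and the traces, which is precisely what your $L^2$ argument cannot dispense with.
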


\noin The idea of the proof is to construct an averaging map which will serve as a conditional expectation from $B(L^2(N,\tau))$ to $N'$, thereby showing that $N'$ is injective, and hence $N$ is injective. Before proceeding with the proof of Proposition \ref{kishimoto}, we need the following lemma.

\begin{lem}\label{Klem}
Let $(N,\tau)$ be a tracial von Neumann algebra, and fix $\varepsilon > 0$. Given a ucp map $\rho: \mb{M}_J \rightarrow N$, a unitary $u \in \mc{U}(N)$, and a contraction $x \in (\mb{M}_J)_{\leq 1}$ such that $||u - \rho(x)||_2 < \varepsilon$.  For every unitary $v_0 \in \mb{M}_J$ for which $x = v_0|x|$, we have $||u- \rho(v_0)||_2 < 3\varepsilon^\frac{1}{2}$.
\end{lem}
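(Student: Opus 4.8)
The plan is to reduce everything to the Kadison--Schwarz inequality for the $2$-positive map $\rho$, namely $\rho(a)^*\rho(a) \le \rho(a^*a)$ for all $a \in \mb{M}_J$, and to exploit the fact that $\rho$ is unital so that $\tau \circ \rho$ is a state that I can evaluate on the positive contraction $|x|$. The first move is to rewrite the difference $v_0 - x$ using the factorization hypothesis: since $x = v_0|x|$ and $v_0$ is unitary, $v_0 - x = v_0(1 - |x|)$, whence $(v_0 - x)^*(v_0 - x) = (1-|x|)^2$. Applying Kadison--Schwarz to $a = v_0 - x$, then the elementary operator inequality $(1-|x|)^2 \le 1 - |x|$ (valid because $0 \le |x| \le 1$, as $x$ is a contraction), together with unitality of $\rho$, yields
\[
\|\rho(v_0) - \rho(x)\|_2^2 \le \tau\big(\rho((1-|x|)^2)\big) \le \tau(\rho(1-|x|)) = 1 - \tau(\rho(|x|)).
\]

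The second move is to show that the hypothesis $\|u - \rho(x)\|_2 < \varepsilon$ forces $\tau(\rho(|x|))$ to be close to $1$. Here I would again invoke Kadison--Schwarz, this time for $a = x$, to obtain $\|\rho(x)\|_2^2 \le \tau(\rho(x^*x)) = \tau(\rho(|x|^2)) \le \tau(\rho(|x|))$, the last step using $|x|^2 \le |x|$. On the other hand, the reverse triangle inequality gives $\|\rho(x)\|_2 \ge \|u\|_2 - \|u - \rho(x)\|_2 > 1 - \varepsilon$, so that $\tau(\rho(|x|)) \ge \|\rho(x)\|_2^2 > (1-\varepsilon)^2 \ge 1 - 2\varepsilon$, i.e. $1 - \tau(\rho(|x|)) < 2\varepsilon$. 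Combining this with the previous display gives $\|\rho(v_0) - \rho(x)\|_2 < \sqrt{2\varepsilon}$.

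The final move is bookkeeping via the triangle inequality:
\[
\|u - \rho(v_0)\|_2 \le \|u - \rho(x)\|_2 + \|\rho(x) - \rho(v_0)\|_2 < \varepsilon + \sqrt{2}\,\varepsilon^{1/2}.
\]
When $\varepsilon \ge 1$ the target is immediate, since $\|u - \rho(v_0)\|_2 \le 2 < 3 \le 3\varepsilon^{1/2}$ (both $u$ and, by Kadison--Schwarz, $\rho(v_0)$ are $\|\cdot\|_2$-contractions). When $\varepsilon < 1$ we have $\varepsilon < \varepsilon^{1/2}$, so $\varepsilon + \sqrt{2}\,\varepsilon^{1/2} < (1+\sqrt{2})\varepsilon^{1/2} < 3\varepsilon^{1/2}$, which is the claim.

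I expect the main obstacle to be conceptual rather than computational: recognizing that Kadison--Schwarz applied to $v_0(1-|x|)$ converts the $\|\cdot\|_2$-distance between $\rho(v_0)$ and $\rho(x)$ into a quantity \emph{linear} in $|x|$ after the inequality $(1-|x|)^2 \le 1 - |x|$, which is precisely what lets the unitality of $\rho$ collapse the estimate to $1 - \tau(\rho(|x|))$. The only delicate point is that the hypothesis is phrased with $\varepsilon$ while the conclusion carries $\varepsilon^{1/2}$; this loss is genuine, since a ucp map need not be multiplicative and passing from $\rho(x)$ (with $x = v_0|x|$) to $\rho(v_0)$ incurs the square root from the Cauchy--Schwarz-type bound above. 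One should merely double-check the constant: the argument produces $(1+\sqrt 2)\varepsilon^{1/2} \approx 2.41\,\varepsilon^{1/2}$, comfortably within the stated $3\varepsilon^{1/2}$.
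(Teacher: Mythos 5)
Your proof is correct and follows essentially the same route as the paper: factor $v_0 - x = v_0(1-|x|)$, apply the Kadison--Schwarz inequality to bound $\|\rho(v_0)-\rho(x)\|_2^2$ by $\tau(\rho((1-|x|)^2))$, use $t^2\le t$ on $[0,1]$ together with the hypothesis and the reverse triangle inequality to get the bound $2\varepsilon$, and finish with the triangle inequality. The only (cosmetic) differences are that the paper passes through $1-\tau(\rho(|x|^2))$ rather than $1-\tau(\rho(|x|))$, and you treat the trivial case $\varepsilon\ge 1$ explicitly where the paper simply assumes $\varepsilon<2^{-1}$.
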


\begin{proof}
Suppose that $0 < \varepsilon < 2^{-1}$.  Since $\rho$ is 2-positive and $\tau$ is positive, we have that \[\tau(\rho(x^*x)) \geq \tau(\rho(x^*)\rho(x)) > 1- 2\varepsilon\] by  Schwarz's inequality and the reverse triangle inequality.  Observe that by \linebreak Schwarz's inequality and functional calculus
\begin{align*}
||\rho(v_0) -\rho(x)||_2^2 &=  ||\rho(v_0(1-|x|))||_2^2\\
 & = \tau(\rho(v_0(1-|x|))^*\rho(v_0(1-|x|)))\\
&\leq \tau(\rho((1-|x|)v_0^*v_0(1-|x|)) \\
&= \tau(\rho((1-|x|)^2))\\
&\leq \tau(\rho(1-|x|^2))\\
&< 2\varepsilon.
\end{align*}
Thus we have \[||\rho(v_0) - u||_2 \leq ||\rho(v_0) - \rho(x)||_2 + ||\rho(x) - u||_2 < (2\varepsilon)^\frac{1}{2} + \varepsilon < 3\varepsilon^\frac{1}{2}.\qedhere\]
\end{proof}

\begin{proof}[Proof of Proposition \ref{kishimoto}]
Consider $N \subset B(L^2(N,\tau))$.  It suffices to show that $N'$ is injective.

Fix a finite subset $F \subset \mc{U}(N) \subset (N)_{\leq 1}$ and $\varepsilon > 0$.  Let $\rho: \mb{M}_J \rightarrow N$ be such that \[F \subset_{||\cdot||_2, \varepsilon} \rho((\mb{M}_{J})_{\leq 1}).\]  Then by Lemma \ref{Klem} we have that  for every $u \in F$ there is a $v_0 \in \mc{U}(\mb{M}_{J})$ such that $||u - \rho(v_0)||_2 \leq 3\varepsilon^\frac{1}{2}$.  By Stinespring's theorem there is a $*$-homomorphism $\pi: \mb{M}_{J} \rightarrow \mb{M}_L(N)$ such that $\rho(x) = W^*\pi(x)W$ where $W = (1,0,\dots,0)^T \in \mb{M}_{L,1}$.  

Let $\xi_\tau$ denote the cyclic vector for $L^2(N,\tau)$ corresponding to $1 \in N$, and let $\mc{K}$ denote the super-Hilbert space containing $L^2(N,\tau)$ afforded by the above Stinespring dilation. Note that for $u \in \mc{U}(N), v \in \mc{U}(\mb{M}_{J}),$ and $b' \in N'$ we have 
\begin{align*}
Wub' &= (b' \otimes I_L)Wu\\
&\text{and}\\
\pi(v)Wb' &= (b'\otimes I_L)\pi(v)W.
\end{align*}
So 
\begin{align*}
||(Wu-\pi(v)W)b'\xi_\tau||_\mc{K}^2 &\leq ||b'||^2||(Wu-\pi(v)W)\xi_\tau||^2_\mc{K}\\
& \leq ||b'||^2 (2 - 2 \FR{Re}\tau(u^*\rho(v)))\\
& \leq 2||b'||^2 ||u - \rho(v)||_2.
\end{align*}
The last inequality is achieved by factoring out 2, and writing 1 as $\FR{Re}\tau(u^*u)$, and then applying Cauchy-Schwarz inequality. Hence for $v_0 \in \mc{U}(\mb{M}_{J})$ such that $||u-\rho(v_0)||_2 < 3\varepsilon^\frac{1}{2}$, we have 
\begin{align}
||(Wu-\pi(v_0)W)b'\xi_\tau||_\mc{K}^2 < 2||b'||^2(3\varepsilon^{1/2}) \label{1}
\end{align} for every $u \in F$.

Next we construct a ucp map $\theta: B(L^2(N,\tau)) \rightarrow B(L^2(N,\tau))$ in the following way: \[ \theta(x):= \int_{\mc{U}(\mb{M}_{J})} W^*\pi(v^*)(x\otimes 1_L) \pi(v)Wdv.\]
Here, integration is with respect to the Haar probability measure on $\mc{U}(\mb{M}_{J})$.  Given $b' \in N'$, we see that 
\begin{align*}
\theta(b') &= \int_{\mc{U}(\mb{M}_{J})} W^*\pi(v^*)(b'\otimes 1_L) \pi(v)Wdv\\
&= b' \int_{\mc{U}(\mb{M}_{J})} W^*\pi(v^*)(I_{B(L^2(N,\tau))}\otimes 1_L) \pi(v)Wdv\\
&=b'.
\end{align*}
Thus $\theta|_{N'} = \text{id}_{N'}$. Let $v_0 \in \mc{U}(\mb{M}_J)$ be as above, and let $u \in F, x \in B(L^2(N,\tau))_{\leq 1},$ and $b', b'' \in N'$ be given.  First note that by the translation invariance of the Haar measure, we have that 

\begin{align*}
&\int_{\mc{U}(\mb{M}_J)} \left\langle u^*W^*\pi(v^*)(x \otimes 1_L)\pi(v)\pi(v_0)Wb'\xi_\tau | b''\xi_\tau\right\rangle_\tau dv\\
=& \int_{\mc{U}(\mb{M}_J)} \left\langle u^*W^*\pi(v_0)\pi(v^*)(x \otimes 1_L)\pi(v)Wb'\xi_\tau | b''\xi_\tau\right\rangle_\tau dv.
\end{align*}
Next, we see that by \eqref{1}

\[\int_{\mc{U}(\mb{M}_J)} \left| \left\langle\left(u^*W^*\pi(v^*)(x\otimes 1_L) \pi(v)Wu - u^*W^*\pi(v^*)(x\otimes 1_L) \pi(v)\pi(v_0)W\right)b'\xi_\tau \Big| b'' \xi_\tau\right\rangle_\tau\right|dv\]
\begin{align*}
=& \int_{\mc{U}(\mb{M}_J)} \left| \left\langle  u^*W^*\pi(v^*)(x\otimes 1_L) \pi(v)(Wu - \pi(v_0)W)b'\xi_\tau \Big| b'' \xi_\tau\right\rangle_\tau\right|dv\\
<& \sqrt{6\varepsilon^{1/2}} ||b'|| \cdot ||b''||.
\end{align*}
Also, by \eqref{1},

\[\int_{\mc{U}(\mb{M}_J)} \left|\left\langle \left(u^*W^*\pi(v_0)\pi(v^*)(x \otimes 1_L)\pi(v)W - W^*\pi(v^*)(x\otimes 1_L)\pi(v)W\right)b'\xi_\tau | b''\xi_\tau\right\rangle_\tau\right| dv\]
 \begin{align*}
 =& \int_{\mc{U}(\mb{M}_J)} \left|\left\langle \pi(v_0)\pi(v^*)(x \otimes 1_L)\pi(v)Wb'\xi_\tau | (Wu - \pi(v_0)W) b''\xi_\tau\right\rangle_\tau\right| dv\\
 <& \sqrt{6\varepsilon^{1/2}} ||b'|| \cdot ||b''||. 
\end{align*}
Thus it follows that \[|\langle (u^*\theta(x)u - \theta(x))b'\xi_\tau | b''\xi_\tau\rangle| < 2\sqrt{6\varepsilon^{1/2}}||b'||\cdot ||b''||.\]


Let $\theta_{(F,\varepsilon)}$ denote the ucp map constructed above for given finite $F \subset \mc{U}(N)$ and $\varepsilon > 0$.  Then $\left\{\theta_{(F,\varepsilon)}\right\}$ forms a net of ucp maps on $B(L^2(N,\tau))$.  By Theorem 1.3.7 of \cite{brownozawa}, there is a limit point $\theta_0$ of this net in the point-ultraweak topology.  Thus for any $u \in \mc{U}(N)$ and $x \in B(L^2(N,\tau)))$ by the above estimates, $u^*\theta_0(x)u = \theta_0(x)$.  Hence $\theta_0(B(L^2(N,\tau))) \subset N'$.  Since $\theta_0$ is contractive, it follows that \[\theta_0: B(L^2(N,\tau)) \rightarrow B(L^2(N,\tau))\] is a conditional expectation onto $N'$.  So $N'$ is injective.
\end{proof}

The next proposition is relevant to the narrative of this paper, and its proof is straightforward. Before presenting the proposition, a definition is in order.

\begin{dfn}\label{wauedef}
Given a separable von Neumann algebra $N$ and a separable II$_1$-factor $M$, two unital $*$-homomorphisms $\pi, \rho: N \rightarrow M$ are \emph{weakly approximately unitarily equivalent} ---denoted $\pi \sim_\text{waue} \rho$---if there is a sequence of unitaries $\left\{ u_n \right\} \subset \mathcal{U}(M)$ such that for every $x \in N$, \[\lim_{n\rightarrow \infty} ||\pi(x) - u_n \rho(x) u_n^*||_2 = 0\]  (see \cite{voiculescu}, \cite{ding-hadwin}, and \cite{orbits}).  Alternatively, $\pi \sim_\text{waue}\rho$ if and only if for any finite subset $F \subset N$ and $\varepsilon >0$, there exists a unitary $u \in \mc{U}(M)$ such that $||\pi(x) - u\rho(x)u^*||_2< \varepsilon$ for every $x \in F$.
\end{dfn}

\begin{prop}\label{waue}
Let $N$ be a II$_1$-factor, and let $\pi, \rho: R \rightarrow N$ be two unital embeddings.    Then $\pi\sim_\text{waue}\rho$. \end{prop}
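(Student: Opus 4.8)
The plan is to verify the finitary reformulation of weak approximate unitary equivalence recorded in Definition \ref{wauedef}: for every finite $F \subset R$ and every $\varepsilon > 0$ I must produce a single unitary $u \in \mc{U}(N)$ with $||\pi(x) - u\rho(x)u^*||_2 < \varepsilon$ for all $x \in F$. The engine is the hyperfiniteness of $R$, which lets me trade the finite set $F$ for a finite-dimensional subalgebra. Concretely, write $R$ as the trace-norm closure of an increasing union $\bigcup_k A_k$ of finite-dimensional subalgebras, and given $F$ and $\varepsilon$ choose $k$ large enough and elements $a_x \in A_k$ with $||x - a_x||_2 < \varepsilon/2$ for each $x \in F$. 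Setting $A := A_k$, the task reduces to conjugating the two restrictions $\pi|_A$ and $\rho|_A$ \emph{exactly}, since both $\pi$ and $\rho$ are trace-preserving---hence $||\cdot||_2$-isometric---so the approximation of $F$ by $A$ transports to the images.

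The heart of the matter is the finite-dimensional case, which I expect to be the only real content: any two unital trace-preserving $*$-homomorphisms $\phi, \psi \colon A \rightarrow N$ of a finite-dimensional algebra $A$ into the II$_1$-factor $N$ are unitarily conjugate inside $N$. Write $A = \bigoplus_{i=1}^m \mb{M}_{n_i}$ and fix matrix units $\{e^{(i)}_{jk}\}$. Because $\phi$ and $\psi$ preserve the trace inherited from $R$, the minimal projections $\phi(e^{(i)}_{11})$ and $\psi(e^{(i)}_{11})$ have equal trace for each $i$; since $N$ is a II$_1$-factor, Murray--von Neumann comparison theory supplies partial isometries $w_i \in N$ with $w_i^* w_i = \psi(e^{(i)}_{11})$ and $w_i w_i^* = \phi(e^{(i)}_{11})$. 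The element
\[ u := \sum_{i=1}^m \sum_{j=1}^{n_i} \phi(e^{(i)}_{j1})\, w_i\, \psi(e^{(i)}_{1j}) \]
is a unitary, because $\sum_{i,j}\phi(e^{(i)}_{jj}) = 1 = \sum_{i,j}\psi(e^{(i)}_{jj})$ and the $w_i$ restrict correctly on the relevant corners, and a direct matrix-unit computation gives $u\psi(e^{(i)}_{jk}) = \phi(e^{(i)}_{jk})u$, whence $u\psi(a)u^* = \phi(a)$ for every $a \in A$. This is the step that genuinely uses that $N$ is a factor of type II$_1$; everything else is bookkeeping.

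Applying this with $\phi = \pi|_A$ and $\psi = \rho|_A$ yields a unitary $u \in \mc{U}(N)$ with $\pi(a) = u\rho(a)u^*$ for all $a \in A$, and the triangle inequality finishes the argument: for $x \in F$,
\[ ||\pi(x) - u\rho(x)u^*||_2 \leq ||\pi(x) - \pi(a_x)||_2 + ||\pi(a_x) - u\rho(a_x)u^*||_2 + ||u\rho(a_x)u^* - u\rho(x)u^*||_2. \]
The middle term vanishes since $\pi(a_x) = u\rho(a_x)u^*$, while the two outer terms equal $||x - a_x||_2 < \varepsilon/2$ by trace-preservation and the unitary invariance of $||\cdot||_2$, for a total bound of $\varepsilon$. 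As $F$ and $\varepsilon$ were arbitrary, $\pi \sim_\text{waue} \rho$. I do not anticipate any serious obstacle: the comparison-of-projections argument in the finite-dimensional case is classical, and the reduction to it is exactly the hyperfinite approximation, consistent with the paper's remark that the proof is straightforward.
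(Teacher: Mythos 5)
Your proof is correct and is precisely the standard argument the paper has in mind when it declares the proof of Proposition \ref{waue} straightforward and omits it: reduce via hyperfiniteness (and Kaplansky density) to a finite-dimensional subalgebra, conjugate the two restrictions exactly by comparison of equal-trace projections in the II$_1$-factor $N$, and transport the approximation using that trace-preserving maps are $||\cdot||_2$-isometric. The only remark worth adding is that trace-preservation of $\pi$ and $\rho$ need not be assumed, since it is automatic from uniqueness of the trace on $R$ and on $N$.
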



\noin It turns out that this property characterizes amenability (after one makes some necessary assumptions)---see Theorem \ref{saasep}.

\subsection{Ultraproducts}
We next recall the tracial ultraproduct construction.  Let $\mc{U}$ denote a free ultrafilter on $\mb{N}$ (see Appendix A of \cite{brownozawa} for a definition).  For each $k \in \mb{N}$, let $\mc{A}_k$ be a unital $C^*$-algebra with tracial state $\tau_k$, and let $||\cdot ||_{2,k}$ denote the induced trace-seminorm. Define the sequence space \[\prod_{k \in \mb{N}}^\infty \mc{A}_k:= \left\{(a_k)_{k \in \mb{N}}: a_k \in \mc{A}_k \text{ and } \sup_k ||a_k|| < \infty\right\}.\] If $\mc{A}_k = \mc{A}$ for every $k \in \mb{N}$ we write $\ell^\infty(\mc{A})$ for $\ds\prod_{k \in \mb{N}}^\infty \mc{A}_k$. We define the \emph{tracial ultraproduct} of the $(\mc{A}_k,\tau_k)$'s, denoted $\ds \prod_{k\rightarrow \mc{U}}(\mc{A}_k,\tau_k)$ (or simply $\ds\prod_{k\rightarrow \mc{U}}\mc{A}_k$ when the context is clear), to be given by \[\prod_{k\rightarrow \mc{U}}(\mc{A}_k,\tau_k) := \left(\prod_{k \in \mb{N}}^\infty \mc{A}_k \right)\Big/ \mc{I}_\mc{U}\] where \[\ds \mc{I}_\mc{U} := \left\{ (a_k) \in \prod_{k \in \mb{N}}^\infty \mc{A}_k : \lim_{k \rightarrow \mc{U}} ||a_k||_{2,k} = 0\right\}.\] Given a sequence $\ds(a_k) \in \prod_{k \in\mb{N}}^\infty \mc{A}_k$, let $(a_k)_\mc{U}$ denote the coset of $(a_k)$ in $\ds\prod_{k\rightarrow \mc{U}}(\mc{A}_k,\tau_k)$.


It will be useful to set the following notation.  Let $(N,\tau)$ be a tracial von Neumann algebra and for each $k \in \mb{N}$ let $(M_k,\tau_k)$ be a tracial von Neumann algebra.  Given an embedding $\ds \pi: N \rightarrow \prod_{k\rightarrow \mc{U}}M_k$, for each $k \in \mb{N}$ let $\pi(x)_k \in M_k$ be so that $\pi(x) = (\pi(x)_k)_\mc{U}$.  That is, $(\pi(x)_k)_{k \in \mb{N}}$ is a representative from $\ds\prod_\mb{N} M_k$ of the coset of $\pi(x)$.

\subsection{Microstates}
Microstates were introduced by Voiculescu in paper II of his revolutionary series of papers titled \tql On the Analogues of Entropy and of Fisher's Information Measure in Free Probability Theory I-VI\tqr \cite{entropy1,entropy,entropy3,entropy4,entropy5,entropy6}. Roughly speaking, a microstate is a tuple of matrices that approximates a given tuple of operators in many moments. The consideration of microstates helps to make embeddings into $R^{\mc{U}}$ more tractable. It is well known that satisfying CEP is equivalent to \tql having sufficiently many microstates.\tqr 

\begin{dfn}
Let $(N,\tau)$ be a tracial von Neumann algebra, and let $X=(x_1,\hdots, x_n)$ be a finite set of operators in $(N)_{\leq1}$. Fix $k \in \mb{N}$, and as usual let $\mb{M}_k$ denote the algebra of $k \times k$ matrices with complex entries. Write $\xi=(\xi_1,\hdots, \xi_n)\in \mb{M}_k^n$. Define the \emph{microstate space} $\Gamma_1(X;d,k,\gamma)$  to be the set of all $\xi \in (\mb{M}_k)_{\leq 1}^n$ such that  $|\text{tr}_k(p(\xi_1,\dots,\xi_n))-\tau(p(x_1,\dots,x_n))| \leq \gamma$ for every noncommutative $*$-monomial $p$ in $n$ variables of degree at most $d$. Here $\text{tr}_k$ denotes the normalized trace on the matrix algebra $\mb{M}_k$. The set $\Gamma(X;d,k,\gamma)$ is defined identically without the operator norm constraint on the microstates.
\end{dfn}

The following proposition follows from a direct normalization argument.

\begin{prop} Let $(N,\tau)$ be a tracial von Neumann algebra satisfying CEP, and let $X=(x_1,\hdots, x_n)\subset (N)_{\leq 1}$ be such that $W^*(x_1,\dots, x_n) = N$. Then, for every $k\in \mb{N}$ there exists an $N_k\in \mb{N}$ such that $\Gamma_1(X;k,N_k,k^{-1})\neq \emptyset$. In particular, for a fixed sequence of microstates $\xi^{(k)}=(m_1^{(k)},\dots, m_n^{(k)})\in \Gamma_1(X;k,N_k,k^{-1})$, there exists an embedding $\sigma: N\to R^{\mc{U}}$ such that $\sigma(x_j)= \left(\xi_j^{(k)}\right)_\mc{U}$ for $1 \leq j \leq n$.
\end{prop}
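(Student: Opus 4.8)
The plan is to feed the CEP embedding through the hyperfinite structure of $R$ to manufacture genuine \emph{norm-bounded} microstates, and then to run the usual moment-matching construction of an ultrapower embedding in reverse. The only real content beyond the standard dictionary between embeddings into $R^{\mc{U}}$ and microstates is the passage from trace-norm approximants to matrices of operator norm at most $1$; I would handle this by conditional expectations onto finite-dimensional subalgebras, which are simultaneously operator-norm contractive, trace-preserving, and $||\cdot||_2$-convergent to the identity. This is the \tql direct normalization\tqr\ the statement alludes to.

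For the first assertion, since $N$ satisfies CEP there is a trace-preserving embedding $\pi\colon N\to R^{\mc{U}}$. As each $x_j$ is a contraction and $\pi$ is a $*$-homomorphism, $\pi(x_j)$ is a contraction, so I may choose representatives $\pi(x_j)=(y_{j,k'})_{\mc{U}}$ with $y_{j,k'}\in R$ and $||y_{j,k'}||\le 1$. Writing $R$ as the $\sigma$-weak closure of $\bigcup_m \mb{M}_{2^m}$, let $E_m\colon R\to \mb{M}_{2^m}$ be the trace-preserving conditional expectation. Because $E_m$ is a unital completely positive contraction with $E_m(y)\to y$ in $||\cdot||_2$ as $m\to\infty$, for each $k'$ I can pick $m(k')$ so large that $z_{j,k'}:=E_{m(k')}(y_{j,k'})$ satisfies $||z_{j,k'}||\le 1$ while $||z_{j,k'}-y_{j,k'}||_2$ is as small as desired. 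Since the moments of a fixed $*$-monomial are $||\cdot||_2$-continuous on the unit ball, $\lim_{k'\to\mc{U}}\mathrm{tr}_{2^{m(k')}}\!\big(p(z_{1,k'},\dots,z_{n,k'})\big)=\tau(p(x_1,\dots,x_n))$ for every $p$. Now fix $k$: there are finitely many $*$-monomials of degree at most $k$, so the set of indices $k'$ for which all of these moments are matched to within $k^{-1}$ lies in $\mc{U}$, hence is nonempty. Choosing one such $k'$ and setting $N_k:=2^{m(k')}$ and $\xi^{(k)}:=(z_{1,k'},\dots,z_{n,k'})$ exhibits a point of $\Gamma_1(X;k,N_k,k^{-1})$.

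For the \tql in particular\tqr\ part, fix any sequence $\xi^{(k)}=(m_1^{(k)},\dots,m_n^{(k)})\in\Gamma_1(X;k,N_k,k^{-1})$ and regard $\prod_{k\to\mc{U}}\mb{M}_{N_k}$ inside $R^{\mc{U}}$ via the trace-preserving inclusions $\mb{M}_{N_k}\hookrightarrow R$. Define $\sigma$ on generators by $\sigma(x_j):=(\xi_j^{(k)})_{\mc{U}}$, a contraction. For any $*$-monomial $p$ of degree $d$ and all $k\ge d$ the defining estimate of $\Gamma_1$ gives $|\mathrm{tr}_{N_k}(p(\xi^{(k)}))-\tau(p(x))|\le k^{-1}$, so taking the ultralimit yields $\mathrm{tr}_{\mc{U}}(p(\sigma(x_1),\dots,\sigma(x_n)))=\tau(p(x_1,\dots,x_n))$. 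Thus $x_j\mapsto \sigma(x_j)$ is a trace-preserving $*$-homomorphism on the $*$-algebra generated by $X$; being an $L^2$-isometry on a $||\cdot||_2$-dense $*$-subalgebra, it extends to a normal trace-preserving $*$-homomorphism on $W^*(X)=N$. Faithfulness of the trace forces the kernel to vanish, so $\sigma$ is the desired embedding.

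The main obstacle is exactly the normalization step: the ultrapower $R^{\mc{U}}$ hands back elements whose trace-norm, but not operator norm, is controlled, whereas $\Gamma_1$ (as opposed to $\Gamma$) demands matrices in the operator-norm unit ball. The conditional-expectation device resolves this cleanly because it repairs the operator norm and the trace-norm approximation at one stroke while keeping the output inside a matrix subalgebra; all remaining steps are the routine moment bookkeeping of the ultraproduct--microstate correspondence.
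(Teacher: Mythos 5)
Your proof is correct and is essentially the argument the paper has in mind: the paper offers no written proof, saying only that the proposition ``follows from a direct normalization argument,'' and your use of contractive representatives followed by trace-preserving conditional expectations onto matrix subalgebras is precisely that normalization, with the converse direction being the standard moment-matching construction (cf.\ Lemma \ref{tracepreserve} of the paper, which you could cite for the extension step instead of re-deriving it). No gaps.
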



Typically microstates are used in conjunction with finitely generated von Neumann algebras.  We now discuss a method to use microstates to obtain an embedding of a separably acting tracial von Neumann algebra.  We first recall the following lemma.

\begin{lem}\label{tracepreserve}
Let $(N,\tau)$ and $(M,\sigma)$ be tracial von Neumann algebras. If $A\subset N$ is a $*$-subalgebra with $W^*(A) = N$, then a trace-preserving $*$-homomorphism $\pi: A \rightarrow M$ extends to an embedding $\tilde{\pi}: N \rightarrow M$.
\end{lem}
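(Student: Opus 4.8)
The plan is to pass to the GNS/$L^2$ picture, extend $\pi$ first as a Hilbert-space isometry, and then recognize the desired $*$-homomorphism as a spatial implementation. Let $\pi_N$, $\pi_M$ denote the left regular (GNS) representations of $N$ and $M$ on $L^2(N,\tau)$ and $L^2(M,\sigma)$, and for $a\in N$ write $\hat a$ for its image vector in $L^2$. First I would record two basic facts. Since $W^*(A)=N$, Kaplansky density together with the fact that bounded strongly convergent nets converge in $\|\cdot\|_2$ shows that $\hat A:=\{\hat a: a\in A\}$ is $\|\cdot\|_2$-dense in $L^2(N,\tau)$. Also $\pi$ is unital: $\pi(1)$ is a projection with $\sigma(\pi(1))=\tau(1)=1$, so faithfulness of $\sigma$ forces $\pi(1)=1$.

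The trace-preserving hypothesis gives $\|\pi(a)\|_2^2=\sigma(\pi(a^*a))=\tau(a^*a)=\|a\|_2^2$ for $a\in A$, so $\hat a\mapsto \widehat{\pi(a)}$ is isometric on the dense subspace $\hat A$ and extends to an isometry $U:L^2(N,\tau)\to L^2(M,\sigma)$ with $U\hat 1_N=\hat 1_M$. Using multiplicativity of $\pi$, for $a,b\in A$ we compute $U\pi_N(a)\hat b=\widehat{\pi(a)\pi(b)}=\pi_M(\pi(a))U\hat b$, so by density and boundedness the intertwining relation $U\pi_N(a)=\pi_M(\pi(a))U$ holds for every $a\in A$. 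Setting $P:=W^*(\pi(A))\subseteq M$ and letting $L^2(P)$ be the $\|\cdot\|_2$-closure of $\hat P$, the range of $U$ is exactly $L^2(P)$ (as $\widehat{\pi(A)}$ is $\|\cdot\|_2$-dense there), so $U$ is in fact a \emph{unitary} $L^2(N,\tau)\to L^2(P,\sigma|_P)$.

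I would then define $\tilde\pi:N\to P\subseteq M$ by declaring $\tilde\pi(x)$ to be the unique element of $P$ with $\pi_P(\tilde\pi(x))=U\pi_N(x)U^*$, where $\pi_P$ is the (faithful, normal) GNS representation of $P$ on $L^2(P)$. The remaining verifications are formal: $\tilde\pi$ is a unital normal $*$-homomorphism because $x\mapsto U\pi_N(x)U^*$ is one and $\pi_P^{-1}$ is a normal $*$-isomorphism on $\pi_P(P)$; it extends $\pi$ since $\pi_P(\tilde\pi(a))=U\pi_N(a)U^*=\pi_P(\pi(a))$ for $a\in A$; it is trace-preserving because $U^*\hat 1_P=\hat 1_N$ gives $\sigma(\tilde\pi(x))=\langle U\pi_N(x)U^*\hat 1_P,\hat 1_P\rangle=\langle \pi_N(x)\hat 1_N,\hat 1_N\rangle=\tau(x)$; and it is injective since $\tilde\pi(x)=0$ forces $\tau(x^*x)=\sigma(\tilde\pi(x^*x))=0$, whence $x=0$ by faithfulness of $\tau$.

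The only non-formal point, and the step I would treat as the main obstacle, is justifying that $\tilde\pi(x)$ even exists for arbitrary $x\in N$, i.e. that conjugation by $U$ carries all of $\pi_N(N)$ into $\pi_P(P)$ rather than merely into $B(L^2(P))$. Here I would invoke normality: the map $x\mapsto U\pi_N(x)U^*$ is $\sigma$-weakly continuous and sends the $\sigma$-weakly dense subset $\pi_N(A)$ into $\pi_P(\pi(A))\subseteq \pi_P(P)$, and $\pi_P(P)$ is $\sigma$-weakly closed in $B(L^2(P))$ since it is a von Neumann algebra. Consequently the entire image lies in $\pi_P(P)$, which is what makes $\tilde\pi$ well defined. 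This interplay of normality with $\sigma$-weak density of $A$ and closedness of $\pi_P(P)$ is the crux of the argument.
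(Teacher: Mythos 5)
Your proof is correct; the paper simply ``recalls'' this lemma without supplying an argument, and what you give is the standard GNS/spatial-implementation proof that is implicitly being invoked (define the isometry $U$ on $\widehat{A}$ via trace preservation, intertwine, and transport $\pi_N(N)$ onto $\pi_P(P)$ by normality of $\mathrm{Ad}(U)$ together with $\sigma$-weak density of $A$ in $N$ and of $\pi(A)$ in $P$). The only point worth a passing remark is that your derivation of $\pi(1)=1$ tacitly assumes $1\in A$, which is the convention in force in the paper (there $A$ is the unital $*$-algebra generated by a generating set), so nothing is lost.
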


Now let $(N,\tau)$ be a separable tracial von Neumann algebra satisfying CEP, and let $X:=\left\{x_j\right\}_{j \in\mb{N}}\subset (N)_{\leq 1}$ be a generating set for $N$ ($\left\{x_j\right\}_{j \in \mb{N}}$ can and often will be taken to be a countable $||\cdot||_2$-dense subset of $(N)_{\leq 1}$).  For $k \in \mb{N}$ let $X_k:= \left\{x_1,\dots, x_k\right\}$. As discussed above, for each $k \in \mb{N}$ there is a $J_k \in \mb{N}$ for which $\Gamma_1(X_k; k, J_k, k^{-1})$ is nonempty.  So for each $k \in \mb{N}$ let \[\xi^{(k)} = (m^{(k)}_1,\dots, m^{(k)}_k) \in \Gamma_1(X_k; k, J_k, k^{-1}).\]  Let $A \subset N$ denote the $*$-subalgebra generated by $X$.  Consider $\pi: A \rightarrow R^\mc{U}$ defined on $X$ by $\pi(x_j) = (\xi^{(k)}_j)_\mc{U}$ for each $j \in \mb{N}$ (where $\xi^{(k)}_j = 0$ if $j > k$).  Then $\pi$ preserves the trace on $A$, and by Lemma \ref{tracepreserve}, $\pi$ extends to an embedding $\tilde{\pi}: N \rightarrow R^\mc{U}$.

\section{Self-tracial stability}\label{ss}

\begin{dfn}[\cite{hadshu}]\label{ssdef}
Let $\ms{C}$-denote a class of $C^*$-algebras closed under \linebreak $*$-isomorphism.  A separably acting von Neumann algebra $N$ is \emph{$\ms{C}$-tracially stable} if for any unital $*$-homomorphism $\pi: N \rightarrow (\mc{A}_k,\tau_k)^\mc{U}$ with $\mc{A}_k \in \ms{C}$ and $\tau_k$ a tracial state on $\mc{A}_k$, there exist unital $*$-homomorphisms $\pi_k: N \rightarrow \mc{A}_k$ such that $\pi(x) = (\pi_k(x))_\mc{U}$ for every $x \in N$. In this case, we say $\pi$ \emph{lifts}. A separably acting tracial von Neumann algebra $(N,\tau)$ is \emph{self-tracially stable} if $N$ is $\left\{(N,\tau)\right\}$-tracially stable.
\end{dfn}

\begin{rmk}\label{finitary}
Self-tracial stability can be expressed in a finitary fashion without the use of ultraproducts.  We believe it is of value to include this alternative framing here, although it seems that the ultraproduct form of the property is much less cumbersome in practice.  The equivalent finitary definition of self-tracial stability can be stated as follows.  Let $(N,\tau)$ be a tracial von Neumann algebra with generating set $\left\{x_1,x_2,\dots\right\}$.  For any $m \in \mb{N}$, let $X_m$ denote the set $\left\{x_1,\dots, x_m\right\}$.  We have that $(N,\tau)$ is self-tracially stable if and only if for any $\varepsilon > 0$ and $n \in \mb{N}$, there are $m>n$ and $k \in \mb{N}$ such that if $\pi: N \rightarrow N$ is a function such that \[||\pi(p(x_1,\dots, x_m)) - p(\pi(x_1),\dots, \pi(x_m))||_2 < k^{-1}\] for every noncommutative $*$-monomial $p$ on $m$ variables of degree no greater than $k$ then there is a unital $*$-homomorphism $\tilde{\pi}: N \rightarrow N$ such that $||\pi(x_j) - \tilde{\pi}(x_j)||_2 < \varepsilon$ for every $1 \leq j \leq m$.  Compare this definition with Definition \ref{tube}.  Note that this definition can be adapted to provide a definition of stability for infinitely generated groups (cf. \S \ref{groups}).
\end{rmk}


\begin{prop}\label{Rss}
The separable hyperfinite II$_1$-factor $R$ is self-tracially stable.
\end{prop}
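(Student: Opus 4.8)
The plan is to exploit the hyperfinite structure of $R$. Write $R = \overline{\bigcup_n F_n}^{\,w}$, where $F_1 \subseteq F_2 \subseteq \cdots$ is the increasing chain of matrix subalgebras $F_n \cong \mb{M}_{2^n}$ coming from the canonical finite tensor factors of $R \cong \bigotimes_{\mb N}\mb{M}_2$, and fix a $\|\cdot\|_2$-dense sequence $\{z_j\}$ drawn from the $*$-algebra $\bigcup_n F_n$. Given a unital $*$-homomorphism $\pi: R \to R^{\mc U}$ with fixed uniformly bounded representatives $\pi(x) = (\pi(x)_k)_{\mc U}$, the goal is to produce unital $*$-homomorphisms $\pi_k: R \to R$ with $(\pi_k(x))_{\mc U} = \pi(x)$ for every $x$. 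Because each $\pi_k$ is trace-preserving, hence $\|\cdot\|_2$-isometric, it suffices to arrange $(\pi_k(z_j))_{\mc U} = \pi(z_j)$ for each $j$ and then pass to the $\|\cdot\|_2$-closure. So the whole problem reduces to lifting $\pi$ on the individual finite-dimensional pieces $F_n$ and then stitching the liftings together.

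The technical crux is the lifting on each $F_n$. Restricting $\pi$ to $F_n \cong \mb{M}_{2^n}$, the images of the matrix units form a system $\{E_{st}\} \subset R^{\mc U}$ satisfying the matrix-unit relations and $\sum_s E_{ss} = 1$ exactly. Choosing representatives $E_{st} = (a_{st,k})_{\mc U}$ with $a_{st,k} \in R$ uniformly bounded, these relations hold in $\|\cdot\|_2$ for $\mc U$-almost every $k$ (a finite intersection of sets in $\mc U$). I would then invoke the standard perturbation of approximate matrix units in a tracial von Neumann algebra: approximate projections are rounded off via spectral functional calculus, approximate partial isometries are corrected via polar decomposition, and one uses that the trace on $R$ realizes every value in $[0,1]$ and that equivalent projections are unitarily conjugate to realize the exact traces $2^{-n}$ and orthogonality of ranges. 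This perturbs $\{a_{st,k}\}$ to an exact system of matrix units $\{f_{st,k}\}$ in $R$ with $\|f_{st,k}-a_{st,k}\|_2 \to_{\mc U} 0$, yielding unital $*$-homomorphisms $\sigma^{(n)}_k: F_n \to R$ with $(\sigma^{(n)}_k(y))_{\mc U} = \pi(y)$ for all $y \in F_n$.

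To assemble these finite-dimensional liftings into honest self-maps of $R$, I would use the self-absorption of $R$. Since $\sigma^{(n)}_k(F_n)$ is a unital copy of $\mb{M}_{2^n}$ inside $R$, its relative commutant $B^{(n)}_k := \sigma^{(n)}_k(F_n)' \cap R$ is again isomorphic to $R$, and $R = \sigma^{(n)}_k(F_n)\,\overline{\otimes}\,B^{(n)}_k$; composing with a fixed isomorphism $R \cong F_n\,\overline{\otimes}\,R$ produces a unital trace-preserving $*$-homomorphism $\hat{\sigma}^{(n)}_k: R \to R$ that restricts to $\sigma^{(n)}_k$ on $F_n$. A diagonal reindexing over $\mc U$ then finishes the argument: choose a decreasing chain $W_1 \supseteq W_2 \supseteq \cdots$ in $\mc U$ with $\bigcap_n W_n = \emptyset$ such that for $k \in W_n$ the map $\hat{\sigma}^{(n)}_k$ matches $\pi$ to within $1/n$ on those $z_j$ with $j \leq n$ that lie in $F_n$; setting $n(k) := \max\{n : k \in W_n\}$ (finite for each $k$, with $\{k : n(k) \geq m\} \supseteq W_m \in \mc U$, so $n(k)\to\infty$ along $\mc U$) and $\pi_k := \hat{\sigma}^{(n(k))}_k$ gives $(\pi_k(z_j))_{\mc U} = \pi(z_j)$ for every $j$, hence the desired lift after closure.

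I expect the genuine technical labor to be in the second paragraph, namely the perturbation of an approximate system of matrix units to an exact one representing the same element of $R^{\mc U}$ (this is the place where the properties of $R$ as a continuous-trace factor are actually used). The conceptual heart, however, is the third paragraph's promotion of finite-dimensional liftings to genuine self-maps via $R \cong R\,\overline{\otimes}\,R$; this reliance on self-absorption is precisely the mechanism that makes $R$ self-tracially stable and foreshadows the role of self-absorption in the later sections of the paper.
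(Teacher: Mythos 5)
Your argument is correct, but it takes a genuinely different route from the paper. The paper's proof is a three-line deduction from two black boxes: by Proposition \ref{waue} any two embeddings of $R$ into a II$_1$-factor are weakly approximately unitarily equivalent, by Theorem 3.1 of \cite{autoultra} this upgrades to honest unitary conjugacy in $R^{\mc{U}}$, and since unitaries in $R^{\mc{U}}$ lift to sequences of unitaries in $R$, conjugating the constant-sequence embedding by the lifted unitaries produces the lift $\pi_k = \mathrm{Ad}(u_k)$. Your proof instead builds the lift by hand: you restrict $\pi$ to the canonical matrix subalgebras $F_n$, lift the exact matrix units in $R^{\mc{U}}$ to exact matrix units in $R$ via the standard perturbation of approximate matrix units (rounding projections by functional calculus, correcting partial isometries by polar decomposition), promote each finite-dimensional lift to a unital trace-preserving endomorphism of $R$ using $R \cong \mb{M}_{2^n} \,\overline{\otimes}\, R$, and finish with a diagonal argument over $\mc{U}$; the reduction to a $||\cdot||_2$-dense subset of $\bigcup_n F_n$ and the reindexing via $n(k) = \max\{n : k \in W_n\}$ are both handled correctly. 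What the paper's route buys is brevity and a cleaner conceptual statement (all embeddings of $R$ into $R^{\mc{U}}$ are unitarily conjugate, which is reused elsewhere); what yours buys is self-containment -- you avoid the waue-to-unitary-conjugacy upgrade from \cite{autoultra} entirely -- and an explicit identification of the two ingredients that actually make $R$ special here, namely weak semiprojectivity of matrix algebras in the tracial sense and the self-absorption $R \cong R\,\overline{\otimes}\,R$. Your closing remark that self-absorption is the operative mechanism is apt: it is exactly the feature exploited again in Lemma \ref{alwayscommute} and Section 5.
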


\begin{proof}
Let $\pi: R \hookrightarrow R^\mc{U}$ be given. It is well-known that any two embeddings $\rho$ and $\sigma$ of $R$ into $R^\mc{U}$ are unitarily conjugate.  Indeed, by Proposition \ref{waue}, $\rho$ and $\sigma$ are weakly approximately unitarily equivalent.  Then by Theorem 3.1 of \cite{autoultra}, $\rho$ and $\sigma$ are unitarily conjugate.  Let $\rho: R \rightarrow R^\mc{U}$ denote the constant-sequence embedding.  That is, $\rho(a) = (a)_\mc{U}$ for every $a \in R$.  Then for each $a \in R$, $\pi(a) = u \rho(a) u^*$ for some unitary $u \in R^\mc{U}$.  It is well-known that for any unitary $u \in R^\mc{U}$ there exist unitaries $u_k \in R$ such that $u = (u_k)_\mc{U}$.  So we have for each $a \in R$,

\begin{align*}
\pi(a) &= u\rho(a)u^*\\
&= (u_k)_\mc{U} (a)_\mc{U} (u^*_k)_\mc{U}\\
&= (u_k a u_k^*)_\mc{U}.
\end{align*}
Letting $\pi_k(a) = u_k \pi(a) u_k^*$ completes the proof.
\end{proof}

The following theorem shows that modulo CEP, self-tracial stability in fact characterizes amenability for II$_1$-factors. This answers Question 2.8 in \cite{sellin} (modulo CEP, of course). 

\begin{thm}\label{tscon}
Let $(N,\tau)$ be a separably acting tracial von Neumann algebra satisfying CEP.  If $N$ is self-tracially stable then $N$ is semidiscrete.
\end{thm}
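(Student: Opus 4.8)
The plan is to leverage the finitary characterization of semidiscreteness in Proposition \ref{jungslem} together with the microstates machinery developed in the Preliminaries. The core strategy is to feed the microstate-induced embedding $\sigma: N \to R^{\mc{U}}$ into a self-tracial lift and then compare that lift against the identity on $N$. First I would fix a finite subset $F \subset (N)_{\leq 1}$ and $\varepsilon > 0$, and (after enlarging $F$ and passing to a generating set as in the discussion preceding \S\ref{ss}) build the microstate embedding $\sigma: N \to R^{\mc{U}}$ so that $\sigma(x_j) = (\xi_j^{(k)})_\mc{U}$ with $\xi^{(k)}$ a microstate in $\Gamma_1(X_k; k, J_k, k^{-1})$. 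The point of this embedding is that its $k$-th coordinate lands in a matrix algebra $\mb{M}_{J_k} \subset R$, so the approximating operators sit inside a finite-dimensional subalgebra at each stage.

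The key maneuver is to now use self-tracial stability \emph{in reverse}. I would consider the map that composes $\sigma$ with an identification of a copy of $R$ inside the picture, in order to produce a unital $*$-homomorphism into $(N)^\mc{U}$ whose coordinates I can control; more precisely, the aim is to realize the constant (diagonal) embedding of $N$ and the microstate embedding $\sigma$ as two maps into a common ultraproduct and conclude, via a lift guaranteed by self-tracial stability, that there are honest $*$-homomorphisms $\pi_k: N \to N$ with $\pi(x) = (\pi_k(x))_\mc{U}$. The crucial structural gain is that each $\pi_k$ factors (approximately, in $\|\cdot\|_2$, on the finite set $F$) through a finite-dimensional matrix algebra: the microstate coordinate $\xi^{(k)} \in \mb{M}_{J_k}$ furnishes the finite-dimensional intermediate algebra, and the lifted $\pi_k$ furnishes the map back into $N$. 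This is exactly the data $(N_\varepsilon, \pi_\varepsilon, A_\varepsilon)$ demanded by Proposition \ref{jungslem}: take $N_\varepsilon = N$ (or an appropriate embedding target), let $A_\varepsilon$ be the image of $\mb{M}_{J_k}$, and verify $\pi_\varepsilon(F) \subset_{\|\cdot\|_2, \varepsilon} A_\varepsilon$.

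The verification of the approximation inequality is where the two approximation scales must be reconciled: the microstate estimate controls how well $\xi^{(k)}$ reproduces the moments of $F$ (hence $\|\cdot\|_2$-distances), while the self-tracial lift controls how well $(\pi_k(x))_\mc{U}$ reproduces $\sigma(x)$ along $\mc{U}$. I would fix the generating set $\{x_j\}$ to be $\|\cdot\|_2$-dense in $(N)_{\leq 1}$ so that $F$ can be taken among the $x_j$, then choose $k$ large (i.e. in a suitable element of $\mc{U}$) so that both the microstate moment error $k^{-1}$ and the lifting error fall below a chosen threshold simultaneously. Combining a $\|\cdot\|_2$-small perturbation from the microstate property with a $\|\cdot\|_2$-small perturbation from the lift yields the desired $\pi_\varepsilon(F) \subset_{\|\cdot\|_2, \varepsilon} A_\varepsilon$.

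The main obstacle I anticipate is correctly setting up the common ultraproduct into which both the diagonal embedding and the microstate embedding map, so that self-tracial stability is genuinely applicable. Self-tracial stability as stated concerns a single $*$-homomorphism $N \to (N,\tau)^\mc{U}$ and asserts it lifts; the challenge is to arrange the microstate data so that the map being lifted is one whose coordinatewise lift $\pi_k$ is forced to factor through the finite-dimensional $\mb{M}_{J_k}$. In particular one must ensure that the finite-dimensionality of the microstate target transfers to a finite-dimensional \emph{subalgebra} $A_\varepsilon$ of the target of $\pi_\varepsilon$ and not merely to a finite-dimensional domain — this is the step where the interplay between \emph{approximate} factoring (only valid on $F$) and the exact hypotheses of Proposition \ref{jungslem} (which permits an auxiliary embedding into a larger algebra $N_\varepsilon$) has to be handled with care. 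Once this bookkeeping is in place, the conclusion that $N$ is semidiscrete follows immediately from Proposition \ref{jungslem}.
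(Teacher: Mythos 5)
Your proposal is correct and follows essentially the same route as the paper: compose an embedding $N\to R^{\mc U}$ (with coordinates sitting in matrix subalgebras of $R$) with a fixed inclusion $R\hookrightarrow N$, lift the resulting map $N\to N^{\mc U}$ using self-tracial stability, and feed the lift together with the matrix algebra $\mb{M}_{J_k}\subset R\subset N$ into Proposition \ref{jungslem}. The only cosmetic difference is that you arrange the finite-dimensional coordinates explicitly via microstates, whereas the paper takes an arbitrary CEP embedding and perturbs its representatives into matrix subalgebras by standard approximation; also note that you never actually need the comparison with the diagonal embedding or any ``factoring'' of $\pi_k$ through $\mb{M}_{J_k}$ --- the $\varepsilon$-containment $\pi_k(F)\subset_{\|\cdot\|_2,\varepsilon}\mb{M}_{J_k}$ is all Proposition \ref{jungslem} requires.
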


\begin{proof}
If $N$ is finite-dimensional, then the conclusion clearly holds.  So assume that $N$ is infinite-dimensional. Let $\tilde{\pi}: N \rightarrow R^\mc{U}$ be an embedding.  Fix an embedding $\iota: R \rightarrow N$ in order to induce an embedding $\pi:= \iota^\mc{U} \circ \tilde{\pi}: N \rightarrow N^\mc{U}$. Since no confusion can occur, we suppress the $\iota$. Fix $\varepsilon >0$ and a finite subset $F = \left\{x_1,\dots, x_n\right\} \subset N$. Since $\pi(N)\subset R^{\mc{U}}\subset N^{\mc{U}}$, by standard approximation arguments, for each $k \in \mb{N}$ there exist unital matrix subalgebras $\mb{M}_{n(k)}\subset R$ such that $\pi(x_j)_k \in \mb{M}_{n(k)}$ for $1 \leq j \leq n$.  

Since $N$ is self-tracially stable there are $*$-endomorphisms $\pi_k: N \rightarrow N$ such that $\pi(x) = (\pi_k(x))_\mc{U}$ for every $x \in N$.  It follows that there is a $K \in \mc{U}$ such that for $k\in K$, \[||\pi(x_j)_k - \pi_k(x_j)||_2 < \varepsilon\] for every $1 \leq j \leq n$.

We now apply Proposition \ref{jungslem} with $N_\varepsilon = N, \pi_\varepsilon = \pi_K,$ and $A_\varepsilon = \mb{M}_{n(K)}$ to conclude that $N$ is semidiscrete.
\end{proof}

In fact, self-tracial stability is equivalent to another formally weaker condition, defined as follows.

\begin{dfn}
Given a class $\ms{C}$ of $C^*$-algebras, a separable tracial von Neumann algebra $(N,\tau)$ is \emph{$\ms{C}$-ucp stable} if for any unital $*$-homomorphism $\ds \pi: N \rightarrow \prod_{k\rightarrow \mc{U}}(\mc{A}_k,\tau_k)$ with $\mc{A}_k \in \ms{C}$ and $\tau_k \in T(\mc{A}_k)$ there exists a sequence of ucp maps $\varphi_k: N \rightarrow \mc{A}_k$ such that $\pi(x) = (\varphi_k(x))_\mc{U}$ for every $x \in N$.  In the case $\ms{C} = \left\{(N,\tau)\right\}$, we say that $N$ is \emph{self-ucp stable}.
\end{dfn}

\begin{thm}\label{tschar}
Let $(N,\tau)$ be a separably acting tracial von Neumann algebra satisfying CEP.  The following are equivalent.

\begin{enumerate}
\item $N$ is injective;
\item $N$ is self-tracially stable;
\item $N$ is self-ucp stable.
\end{enumerate}
\end{thm}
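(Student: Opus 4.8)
The plan is to prove the three-way equivalence by closing a cycle of implications, leaning on the machinery already assembled in the excerpt. The most economical route is $(1) \Rightarrow (2) \Rightarrow (3) \Rightarrow (1)$, since two of these three links are short. For $(1) \Rightarrow (2)$, injectivity means $N$ is amenable, hence hyperfinite; an amenable $N$ is self-tracially stable because any embedding into $\{(N,\tau)\}^{\mc{U}}$ can be lifted using the fact that $N$ absorbs $R$ and the argument of Proposition \ref{Rss} carries over (or one invokes the well-known result that amenability implies self-tracial stability, referenced in the introduction). For $(2) \Rightarrow (3)$, the implication is purely formal: any lift of $\pi$ by genuine $*$-homomorphisms $\pi_k: N \to N$ is in particular a lift by ucp maps $\varphi_k = \pi_k$, since $*$-homomorphisms are ucp. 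This is the easiest step.

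\noin The substantive step is $(3) \Rightarrow (1)$, where the goal is to extract injectivity from the merely-ucp lifting property. First I would take an embedding $\tilde{\pi}: N \to R^{\mc{U}}$ guaranteed by CEP, and as in the proof of Theorem \ref{tscon}, compose with a fixed embedding $\iota: R \hookrightarrow N$ to produce $\pi: N \to N^{\mc{U}}$ factoring through $R^{\mc{U}}$. Self-ucp stability then supplies ucp maps $\varphi_k: N \to N$ with $\pi(x) = (\varphi_k(x))_{\mc{U}}$. The plan is to verify the finitary criterion of Proposition \ref{kishimoto}: given a finite $F \subset (N)_{\leq 1}$ and $\varepsilon > 0$, I must produce some $J$ and a ucp map $\rho: \mb{M}_J \to N$ with $F \subset_{\|\cdot\|_2, \varepsilon} \rho((\mb{M}_J)_{\leq 1})$. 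Because $\pi(F) \subset R^{\mc{U}}$, for $\mc{U}$-many $k$ the representatives $\pi(x)_k$ lie (up to $\varepsilon/2$ in $\|\cdot\|_2$) in a finite-dimensional matrix subalgebra $\mb{M}_{n(k)} \subset R \subset N$; this is the same approximation step used in Theorem \ref{tscon}. Fixing such a $k$ in $\mc{U}$ for which also $\|\pi(x)_k - \varphi_k(x)\|_2 < \varepsilon/2$ for every $x \in F$, the composition of the inclusion $\mb{M}_{n(k)} \hookrightarrow N$ with a conditional-expectation-type ucp map, or more directly the ucp map $\varphi_k$ precomposed appropriately, will serve as the required $\rho$ after I arrange the domain to be a matrix algebra.

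\noin The main obstacle I anticipate is matching the shape of the data to Kishimoto's hypothesis: $\varphi_k$ has domain $N$, not a matrix algebra $\mb{M}_J$, so I cannot feed $\varphi_k$ directly into Proposition \ref{kishimoto}. The resolution is to route through the finite-dimensional approximant: since each $\pi(x)_k$ is $\varepsilon$-close in $\|\cdot\|_2$ to an element of $\mb{M}_{n(k)} \subset R$, and the constant embedding $\mb{M}_{n(k)} \hookrightarrow N$ (via $\iota$) is a genuine unital $*$-homomorphism and hence ucp, I should set $J = n(k)$ and let $\rho: \mb{M}_J \to N$ be this inclusion. Then each element of $F$ is $\|\cdot\|_2$-approximated by $\pi(x)_k \in \mb{M}_{n(k)} = \rho((\mb{M}_J)_{\leq 1})$ up to $\varepsilon$, which is exactly the containment Proposition \ref{kishimoto} requires. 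Care must be taken that the approximants are genuinely contractions in $\mb{M}_J$ (which holds since we work inside $(R)_{\leq 1}$) and that the $\|\cdot\|_2$ estimates survive passage through the finite-dimensional truncation; these are routine once the indices are chosen inside the ultrafilter. With the Kishimoto criterion verified, $N$ is injective, closing the cycle and establishing the equivalence.
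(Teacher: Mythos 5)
Your cycle has the right shape, and $(1)\Rightarrow(2)$ and $(2)\Rightarrow(3)$ are fine. The problem is in $(3)\Rightarrow(1)$, and it is a genuine gap rather than a technicality. Kishimoto's criterion (Proposition \ref{kishimoto}) requires you to approximate the set $F$ \emph{itself}, sitting inside $N$, by the range of a single ucp map $\rho:\mb{M}_J\to N$. What your construction actually controls is the distance from $\varphi_k(x)$ to $\mb{M}_{n(k)}$: since $\pi(x)=(\varphi_k(x))_\mc{U}$, the representative $\pi(x)_k$ is close to $\varphi_k(x)$ for $\mc{U}$-many $k$, not to $x$. The ucp maps $\varphi_k$ are in no sense close to the identity, so the sentence ``each element of $F$ is $||\cdot||_2$-approximated by $\pi(x)_k\in\mb{M}_{n(k)}$'' is exactly the false step --- if every $x\in F$ really were within $\varepsilon$ of a matrix subalgebra of $N$ for all $F$ and $\varepsilon$, then $N$ would be hyperfinite with no argument at all, which is the conclusion you are trying to reach. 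This is also why the parallel argument in Theorem \ref{tscon} needs the lifts to be honest $*$-homomorphisms: there one feeds the embedding $\pi_K$ into Jung's Proposition \ref{jungslem}, which explicitly allows $F$ to be moved by an embedding before being approximated, whereas Kishimoto's criterion permits no such move and a ucp $\varphi_k$ is not an embedding in any case. Nor can you instead apply Kishimoto to $\sigma(N)\subset R^\mc{U}$ (with $\sigma$ the embedding $N\to R^\mc{U}$ you started from): the coordinates $\mb{E}_R(\varphi_k(x))$ lie near matrix algebras $\mb{M}_{n(k)}$ of unbounded, $k$-dependent size, and this does not yield one ucp map from one fixed $\mb{M}_J$ into $R^\mc{U}$ whose range approximates $\sigma(F)$; overcoming precisely that size mismatch is what complete tubularity accomplishes in Lemma \ref{tube3}, and that tool is not available under hypothesis (3) alone.

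The paper's own proof of $(3)\Rightarrow(1)$ bypasses Kishimoto entirely. From the ucp lift one assembles a single ucp map $\Phi:N\to\ell^\infty(R)$, $\Phi(x)=(\mb{E}_R\circ\varphi_k(x))_{k}$; since $\ell^\infty(R)$ is injective and $\mb{E}_{\sigma(N)}\circ Q\circ\Phi$ recovers $\sigma$ (where $Q:\ell^\infty(R)\to R^\mc{U}$ is the quotient map), any ucp map into $\sigma(N)$ extends by first extending into $\ell^\infty(R)$ and then compressing back down. In other words, $N$ is realized as the image of a ucp idempotent-type factorization through an injective algebra, hence is injective. If you want to retain a Kishimoto-flavoured proof, you must manufacture ucp maps \emph{from} matrix algebras \emph{into} $N$ whose ranges nearly contain $F$; hypothesis (3) hands you ucp maps pointing in the opposite direction, and it is the conditional expectation $\mb{E}_{\sigma(N)}:R^\mc{U}\to\sigma(N)$ together with the injectivity of $\ell^\infty(R)$ that the paper uses to reverse the arrow.
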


The only non-trivial implication is $(3)\Rightarrow (1)$. This statement was discussed in a different context by Brown in Theorem 6.3.3 of \cite{invar}; we document the proof via injectivity here. 

\begin{proof}[Proof of Theorem \ref{tschar}]
 The implication clearly holds if $N$ is finite-dimensional, so assume $N$ is infinite dimensional. Fix a unital inclusion $\iota_0: R \hookrightarrow N$ to induce an embedding $\iota: R^{\mc{U}} \hookrightarrow N^{\mc{U}}$. Let $\sigma:N\hookrightarrow R^{\mc{U}}$ be an embedding (given by hypothesis). Let $\pi=\iota\circ \sigma: N\rightarrow N^{\mc{U}}$.  Since $N$ is self-ucp stable, there exists a sequence of ucp maps $\varphi_k: N \rightarrow N$ such that $\pi(x) = (\varphi_k(x))_\mc{U}$ for every $x \in N$.  Let $\mb{E}_R$ denote the conditional expectation of $N$ onto $\iota_0(R)$.  Then by construction we have  that $\pi(x) = (\mb{E}_R(\varphi_k(x)))_\mc{U}$.  

We now show that $N$ is injective, and by \cite{connes} this will imply $(1)$.  Let $A \subset B$ be an inclusion of operator systems, and let $\psi: A \rightarrow \sigma(N)$ be a ucp map.  Consider the ucp map $\Phi \circ \psi: A \rightarrow \ell^\infty(R)$ where $\Phi: N \rightarrow \ell^\infty(R)$ is given by $\Phi(x) = (\mb{E}_R \circ \varphi_k(x))_{k=1}^\infty$ for $x \in N$.  Since $\ell^\infty(R)$ is injective, there exists $\tilde{\Phi}: B \rightarrow \ell^\infty(R)$ extending $\Phi \circ \psi$.  Let $\mb{E}_N$ denote the conditional expectation of $R^\mc{U}$ onto $\sigma(N)$, and let $Q: \ell^\infty(R) \rightarrow R^\mc{U}$ denote the canonical quotient map. Then we have that $\tilde{\psi} := \mb{E}_N \circ Q \circ \tilde{\Phi}: B \rightarrow \sigma(N)$ is a ucp extension of $\psi$.  Thus $\sigma(N) \cong N$ is injective.
\end{proof}

\begin{cor}
If $(N,\tau)$ is a separably acting non-amenable tracial von Neumann algebra satisfying CEP, then given an embedding $\pi: N \rightarrow N^\mc{U}$ that factors through $R^\mc{U}$, $\pi$ does not lift.  Furthermore, $\pi$ is not unitarily conjugate to the constant-sequence embedding of $N$ into $N^\mc{U}$.
\end{cor}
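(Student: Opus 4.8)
The plan is to derive both assertions by mining the \emph{proof} of Theorem \ref{tscon} rather than its statement, together with the elementary fact that liftability is invariant under unitary conjugation.

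First, I would observe that the proof of Theorem \ref{tscon} never uses the full force of self-tracial stability: it only uses that the one embedding $\pi$ constructed there---which has image contained in $R^{\mc{U}} \subset N^{\mc{U}}$---lifts. Isolating this, the proof in fact establishes the sharper implication: \emph{if} $\pi \colon N \to N^{\mc{U}}$ is an embedding with $\pi(N) \subset R^{\mc{U}}$ and $\pi$ lifts, \emph{then} $N$ is semidiscrete. The mechanism is exactly as in that proof: the containment $\pi(N) \subset R^{\mc{U}}$ lets one choose representatives $\pi(x_j)_k$ inside $R$, hence inside finite-dimensional matrix subalgebras $\mb{M}_{n(k)} \subset R$; a lift $\pi_k \colon N \to N$ then $||\cdot||_2$-approximates these matrix elements along $\mc{U}$, so for a suitable $K \in \mc{U}$ one gets $\pi_K(F) \subset_{||\cdot||_2, \varepsilon} \mb{M}_{n(K)}$, and Proposition \ref{jungslem} delivers semidiscreteness. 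Since in our corollary the phrase ``$\pi$ factors through $R^{\mc{U}}$'' means precisely $\pi(N) \subset R^{\mc{U}}$ (viewed inside $N^{\mc{U}}$ via a fixed inclusion $R \subset N$), and since $N$ is assumed non-amenable, equivalently not semidiscrete, the contrapositive yields that $\pi$ does not lift. This settles the first assertion.

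For the second assertion I would show that liftability is preserved under unitary conjugation and that the constant-sequence embedding lifts; the claim then follows by contradiction. The constant-sequence embedding $\delta \colon N \to N^{\mc{U}}$, $\delta(a) = (a)_\mc{U}$, lifts trivially via $\pi_k = \text{id}_N$. If $\pi$ were unitarily conjugate to $\delta$, say $\pi(x) = u\,\delta(x)\,u^*$ for all $x$ and some $u \in \mc{U}(N^{\mc{U}})$, I would lift $u$ to a sequence of unitaries $u = (u_k)_\mc{U}$ with $u_k \in \mc{U}(N)$---the same standard fact used in the proof of Proposition \ref{Rss}---so that $\pi(x) = (u_k x u_k^*)_\mc{U}$ with each $x \mapsto u_k x u_k^*$ a unital $*$-endomorphism of $N$. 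Thus $\pi$ would lift, contradicting the first assertion. Hence $\pi$ is not unitarily conjugate to the constant-sequence embedding.

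I do not anticipate a serious obstacle, since essentially all of the analytic content is already contained in Theorem \ref{tscon} and Proposition \ref{jungslem}; the corollary is a repackaging of that proof plus the unitary-conjugation invariance of liftability. The one point requiring a little care is the verification that the components $\pi_K$ of the lift qualify as an \emph{embedding} (trace-preserving and injective) in the sense required by Proposition \ref{jungslem}; this is exactly the point already handled in the proof of Theorem \ref{tscon} (it is automatic when $N$ is a factor, by uniqueness of the trace), so I would simply invoke that proof rather than reprove it.
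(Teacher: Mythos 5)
Your proof is correct, and it follows the same overall strategy as the paper: the paper's proof of this corollary is literally ``this follows directly from the proof of Theorem \ref{tschar},'' i.e.\ one observes that the theorem's argument only needs the single embedding factoring through $R^\mc{U}$ to lift, and then notes that liftability passes to unitary conjugates because unitaries in $N^\mc{U}$ lift to sequences of unitaries. The one difference is which theorem's proof you mine: you extract the implication from the proof of Theorem \ref{tscon} (via Proposition \ref{jungslem} and semidiscreteness), whereas the paper points to the proof of Theorem \ref{tschar} (via ucp lifts, the conditional expectation $\mb{E}_R$, and injectivity of $\ell^\infty(R)$). The \ref{tschar} route has the small advantage that it never needs the components of the lift to be trace-preserving or injective---any ucp lift suffices---so the point you flag at the end (that the $\pi_K$ must qualify as an \emph{embedding} in the sense of Proposition \ref{jungslem}, which is automatic for factors but requires a word for general tracial $N$) simply does not arise there. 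Since a $*$-homomorphic lift is in particular a ucp lift, you could route your first assertion through Theorem \ref{tschar} verbatim and avoid that caveat entirely; your second assertion (constant-sequence embedding lifts trivially, unitary conjugation preserves liftability) is exactly what the paper intends and is complete as written.
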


\begin{proof}
This follows directly from the proof of Theorem \ref{tschar}.
\end{proof}


\begin{rmk}
There are many variations of the above equivalent statements that also characterize amenability modulo CEP.  For instance:
\begin{itemize}
\item For any collection $\ms{C}$ of II$_1$-factors into which $N$ embeds, $N$ is $\ms{C}$-tracially stable;
\item For any II$_1$-factor $M$ into which $N$ embeds, $N$ is $\left\{M\right\}$-tracially stable;
\item There exists a II$_1$-factor $M$ into which $N$ embeds such that $N$ is $\left\{M\right\}$-tracially stable;
\item $N$ is $\left\{R\right\}$-ucp stable;
\item There exists a II$_1$-factor $M$ for which $N$ is $\left\{M\right\}$-ucp stable.
\item $N$ is \textbf{II$_1$}-ucp stable where \textbf{II$_1$} denotes the class of II$_1$-factors.
\end{itemize}
\end{rmk}

One of the appeals of self-tracial stability is that it does not require a CEP assumption to define. We see that the CEP assumption in Theorem \ref{tschar} yields the following corollary, yielding a strategy for identifying counterexamples to CEP.

\begin{cor}
If $(N,\tau)$ is a separable tracial von Neumann algebra that is self-tracially stable and not hyperfinite, then it does not satisfy CEP.
\end{cor}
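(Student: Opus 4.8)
The plan is to argue by contraposition, since the corollary is simply a restatement of the substantive content already established in Theorem \ref{tschar}. Suppose toward a contradiction that $(N,\tau)$ is self-tracially stable, is not hyperfinite, and yet satisfies CEP. Because $N$ satisfies CEP, Theorem \ref{tschar} applies; in particular the implication from self-tracial stability (condition (2)) to injectivity (condition (1)) holds. Thus $N$ is injective.

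Next I would invoke the well-known equivalence of the four conditions in Definition \ref{amendef}---injectivity, hyperfiniteness, amenability, and semidiscreteness---recorded in the discussion immediately following that definition. In particular, injectivity implies hyperfiniteness, so $N$ is hyperfinite. This contradicts the standing assumption that $N$ is not hyperfinite, and therefore the assumption that $N$ satisfies CEP must fail. Reading the argument forward, this is exactly the contrapositive of the $(2)\Rightarrow(1)$ direction of Theorem \ref{tschar}, phrased in terms of hyperfiniteness rather than injectivity.

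There is no genuine obstacle to overcome here: all of the real work---constructing the conditional expectation onto $\sigma(N)$ by lifting via $\ell^\infty(R)$ and exploiting its injectivity together with the stability hypothesis---has already been carried out in the proof of Theorem \ref{tschar}. The one point worth making explicit is the precise role of the CEP hypothesis in that theorem: it is what guarantees in the first place that $N$ embeds into $R^{\mc{U}}$, which is indispensable for setting up the map $\pi = \iota \circ \sigma$ used there. Without CEP the equivalence is unavailable, and this is exactly why the contrapositive yields a counterexample-detection strategy for CEP, as advertised in the introduction: producing a single self-tracially stable, non-hyperfinite tracial von Neumann algebra would refute CEP for that algebra.
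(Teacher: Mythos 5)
Your proof is correct and matches the paper's (implicit) argument exactly: the corollary is stated in the paper as an immediate consequence of Theorem \ref{tschar}, namely the contrapositive of the implication from self-tracial stability to injectivity, combined with the standard equivalence of injectivity and hyperfiniteness from Definition \ref{amendef}. Nothing further is needed.
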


Considerations of new properties of tracial von Neumann algebras in the context of ultraproducts often leads one to consider model theoretic questions.  A natural question to ask is if self-tracial stability is an axiomatizable property.

\begin{dfn}[\cite{fms}]
A property is \emph{axiomatizable} if it is closed under isomorphism, ultrapower, and ultraroot (cf. \cite{FHS}).
\end{dfn}

\begin{prop}\label{axiom}
Self-tracial stability is not an axiomatizable property.  
\end{prop}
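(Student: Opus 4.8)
The plan is to show that self-tracial stability is not invariant under elementary equivalence; since any axiomatizable property is preserved under ultrapowers and ultraroots, and hence (by the continuous Keisler--Shelah theorem, see \cite{FHS}) cannot separate elementarily equivalent separable algebras, this suffices. Concretely, I would exhibit two elementarily equivalent separable tracial von Neumann algebras $R$ and $N$ such that $R$ is self-tracially stable by Proposition \ref{Rss}, while $N$ is not. The point is that $M \equiv N$ yields $M^{\mathcal{U}} \cong N^{\mathcal{V}}$ for suitable ultrafilters, so a class closed under isomorphism, ultrapowers, and ultraroots is closed under elementary equivalence; producing such a pair therefore contradicts axiomatizability.

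To build $N$, I would start from the diagonal inclusion $R \prec R^{\mathcal{U}}$, which is elementary by \L o\'s's theorem, so $R \equiv R^{\mathcal{U}}$. Since $\mathbb{F}_2$ is residually finite, $L(\mathbb{F}_2)$ satisfies CEP, so I fix an embedding $L(\mathbb{F}_2) \hookrightarrow R^{\mathcal{U}}$. By downward L\"owenheim--Skolem for tracial von Neumann algebras, there is a separable elementary submodel $N \prec R^{\mathcal{U}}$ whose domain contains the generators, hence all, of this copy of $L(\mathbb{F}_2)$. Then $N$ is a separably acting tracial von Neumann algebra with $N \equiv R^{\mathcal{U}} \equiv R$, and as a separable subalgebra of the ultrapower $R^{\mathcal{U}}$ (over $\mathbb{N}$) it satisfies CEP.

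It remains to see that $N$ is not self-tracially stable. Because $L(\mathbb{F}_2) \subset N$ and $N$ is finite, there is a trace-preserving conditional expectation $E : N \to L(\mathbb{F}_2)$. If $N$ were injective, then composing the norm-one projection $B(L^2(N)) \to N$ with $E$ would give a conditional expectation of $B(L^2(N))$ onto $L(\mathbb{F}_2)$, forcing $L(\mathbb{F}_2)$ to be injective, which is false. Hence $N$ is non-amenable, and by Theorem \ref{tscon} a separable CEP algebra that is self-tracially stable must be amenable; therefore $N$ is not self-tracially stable, whereas $R$ is. This contradicts the invariance of self-tracial stability under elementary equivalence and establishes Proposition \ref{axiom}.

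The main obstacle is exactly the production of a separable, non-amenable, CEP algebra elementarily equivalent to $R$; equivalently, the observation that amenability is not preserved under elementary equivalence at the separable level. The device that resolves it is passing to a separable elementary submodel of $R^{\mathcal{U}}$ that captures a non-amenable subalgebra, after which the conditional-expectation argument propagates non-injectivity from $L(\mathbb{F}_2)$ up to $N$. I would also take care with the bookkeeping ensuring $N$, being separable and contained in an ultrapower over $\mathbb{N}$, satisfies CEP in precisely the sense required by Theorem \ref{tscon}.
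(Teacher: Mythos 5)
Your proof is correct and follows essentially the same route as the paper: both argue that axiomatizability would force self-tracial stability to be preserved under elementary equivalence, and then contradict this with a separable, non-amenable, CEP algebra elementarily equivalent to $R$, invoking Theorem \ref{tscon}/\ref{tschar}. The only difference is that the paper simply asserts the existence of such an algebra, whereas you construct it explicitly (downward L\"owenheim--Skolem applied to a copy of $L(\mb{F}_2)$ inside $R^\mc{U}$, plus the conditional-expectation argument for non-injectivity), which is a welcome filling-in of a detail the paper leaves implicit.
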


\begin{proof}
By way of contradiction, suppose that self-tracial stability is axiomatizable. Let $(N,\tau)$ be a non-hyperfinite separably acting tracial von Neumann algebra such that $N$ is elementarily equivalent to $R$.  This implies that $N$ is self-tracially stable because $R$ is.  By elementary equivalence, $N^\mc{U} \cong R^\mc{U}$, and thus $N$ satisfies CEP.  So by Theorem \ref{tschar} we have that $N$ is not self-tracially stable, absurd!
\end{proof}

We close this section by mentioning that since self-tracial/ucp stability is a property that does not require any sort of CEP assumption to state, results regarding self-tracial/ucp stability away from the context of CEP would be of significant interest.  A starting point would be to consider tracial von Neumann algebras with Property (T).

\section{Conjugation by ucp maps}\label{ucp}


\begin{dfn}\label{wms}
Let $(N,\tau)$ be a separably acting tracial von Neumann algebra, and for each $k \in \mb{N}$ let $M_k$ be a II$_1$-factor. We say that two embeddings $\ds\pi,\rho:N\rightarrow \prod_{k\rightarrow \mc{U}}M_k$ are \emph{ucp-conjugate} if there exists a sequence of subtracial ucp maps $\varphi_k: M_k \rightarrow M_k$ such that for any $x\in N$ we have $\rho(x)=(\varphi_k(\pi(x)_k))_{\mc{U}}$. We write $\rho=(\varphi_k)_\mc{U} \circ \pi$. 
\end{dfn}

\noin Unless mentioned otherwise, all ucp maps in the rest of the paper are assumed to be subtracial.


\begin{prop}
If $(N,\tau)$ is self-tracially stable, then any pair of embeddings of $N$ into $N^\mc{U}$ are ucp-conjugate. 
\end{prop}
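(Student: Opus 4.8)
The plan is to exploit self-tracial stability to lift one of the two embeddings to a sequence of $*$-endomorphisms of $N$, and then to realize the other embedding as the composition of this lift with a fixed ucp map. Let $\pi,\rho: N \rightarrow N^\mc{U}$ be two embeddings. First I would apply self-tracial stability to $\rho$: since $N$ is $\{(N,\tau)\}$-tracially stable, the embedding $\rho$ lifts, so there exist $*$-homomorphisms $\rho_k: N \rightarrow N$ with $\rho(x) = (\rho_k(x))_\mc{U}$ for every $x \in N$. Each $\rho_k$ is in particular a ucp (indeed trace-preserving) map $N \rightarrow N$, which is the kind of building block that the definition of ucp-conjugacy (Definition \ref{wms}) requires, except that there the maps go $M_k \to M_k$ acting on the $k$-th coordinate of the representative of $\pi(x)$.

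The key bookkeeping step is therefore to arrange the two sequences of coordinates so that a single sequence of ucp maps carries one to the other. Writing $\pi(x) = (\pi(x)_k)_\mc{U}$ as in the notation established after the ultraproduct discussion, I want ucp maps $\varphi_k: N \rightarrow N$ with $\rho(x) = (\varphi_k(\pi(x)_k))_\mc{U}$. The natural guess is to set $\varphi_k := \rho_k \circ \pi_k^{-1}$ on the range of $\pi_k$ — but $\pi$ need not itself be liftable to honest $*$-homomorphisms coordinatewise, and even where it is, $\pi_k$ need not be invertible. To get around this I would instead lift both embeddings simultaneously. Consider the embedding $\pi \oplus \rho$, or more precisely work with a single representative: since self-tracial stability lifts any $*$-homomorphism into $N^\mc{U}$, I can apply it to an embedding that encodes $\pi$, obtaining $*$-homomorphisms $\pi_k: N \to N$ with $\pi(x) = (\pi_k(x))_\mc{U}$; then $\pi_k$ is a trace-preserving embedding of $N$ into $N$, hence there is a conditional expectation $\mathbb{E}_k$ of $N$ onto $\pi_k(N)$, and $\varphi_k := \rho_k \circ \pi_k^{-1} \circ \mathbb{E}_k$ is a ucp map $N \to N$ (composition of a ucp conditional expectation, a $*$-isomorphism onto the subalgebra, and a $*$-homomorphism). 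By construction $\varphi_k(\pi_k(x)) = \rho_k(x)$, so $\varphi_k(\pi(x)_k) = \rho_k(x)$ along the relevant index set, whence $(\varphi_k(\pi(x)_k))_\mc{U} = \rho(x)$.

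The main obstacle I expect is precisely the invertibility/domain issue that forces the use of $\mathbb{E}_k$: self-tracial stability guarantees coordinatewise $*$-homomorphisms but says nothing about their ranges coinciding, so the maps $\pi_k$ and $\rho_k$ land in possibly different copies of $N$ inside $N$, and I must feed $\varphi_k$ an arbitrary element $\pi(x)_k$ of $N$ rather than an element known to lie in $\pi_k(N)$. Composing with the conditional expectation $\mathbb{E}_k$ onto $\pi_k(N)$ is what makes $\varphi_k$ globally defined and ucp while leaving its behavior on $\pi_k(N)$ unchanged; since the representative $\pi(x)_k$ agrees with $\pi_k(x)$ on a set in $\mc{U}$, the expectation does not alter the ultraproduct limit. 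I would also need to check that the subtraciality convention of Definition \ref{wms} is met, which is automatic here because each $\varphi_k$ is a composition of trace-preserving and trace-nonincreasing maps. The remaining verifications are routine.
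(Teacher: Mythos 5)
Your proposal is correct and follows essentially the same route as the paper: lift both embeddings coordinatewise via self-tracial stability and set $\varphi_k = \rho_k \circ \pi_k^{-1} \circ \mathbb{E}_k$ with $\mathbb{E}_k$ the conditional expectation onto $\pi_k(N)$, which is exactly the paper's construction. One small imprecision: the representative $\pi(x)_k$ agrees with $\pi_k(x)$ only in the $\|\cdot\|_2$-limit along $\mc{U}$, not literally on a set in $\mc{U}$, but since the $\varphi_k$ are subtracial (hence $\|\cdot\|_2$-contractive) the conclusion $(\varphi_k(\pi(x)_k))_\mc{U} = \rho(x)$ is unaffected.
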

\begin{proof}
For any two embeddings $\sigma_1,\sigma_2:N\rightarrow N^{\mc{U}}$, one has that for each $k\in \mb{N}$, there exist unital $*$-homomorphisms $\pi_{1k}$, $\pi_{2k}:N\rightarrow N$ satisfying $\sigma_i(x)=(\pi_{ik}(x))_{\mc{U}}$. Define $\varphi_k:N\rightarrow N$ as $\varphi_k= \pi_{2k}\circ\pi_{1k}^{-1}\circ\mb{E}_k$ where $\mb{E}_k$ is the conditional expectation from $N$ onto $\pi_{1k}(N)$. It is easy to see that this collection $(\varphi_k)_{k \in \mb{N}}$ of ucp maps implement $\sigma_2=(\varphi_k)_\mc{U} \circ \sigma_1$, as required.
\end{proof}

From the functional calculus fact that unitaries in ultraproducts lift to a sequence of unitaries, it follows that if $N$ satisfies the property that any two embeddings of $N$ into $R^{\mc{U}}$ are conjugate by a unitary in $R^{\mc{U}}$, then any two embeddings of $N$ into $R^\mc{U}$ are ucp-conjugate. In \cite{jung}, Jung showed that assuming CEP and finitely many generators, $R$ is the only separably acting II$_1$-factor with the former property; the goal of this section is to show that, modulo CEP, $R$ is the only separably acting II$_1$-factor with the latter property.  This considerably strengthens Jung's theorem.
To do this we define a ucp analog of Jung's notion of tubularity that allows for the possibility of infinitely many generators.

\begin{dfn}\label{tube}
Let $(N,\tau)$ be a tracial von Neumann algebra, and let $\{x_1,\hdots, x_n\}$ be a finite subset of $(N)_{\leq 1}$. Fix $\varepsilon > 0$. The set $\left\{x_1,\dots, x_n\right\}$ is \emph{$\varepsilon$-completely tubular} if there exist $k\in \mb{N}$ and $\gamma>0$ with the property that for any $J\in \mb{N}$ and $\xi,\eta \in \Gamma(\left\{x_1,\dots, x_n\right\};k,J,\gamma)$, there exists a ucp map $\varphi: \mb{M}_J \rightarrow \mb{M}_J$ such that \[||\xi_j-\varphi(\eta_j)||_{2,\mb{M}_j}<\varepsilon\] for every $1 \leq j \leq n$. If $\left\{x_1,\dots, x_n\right\}$ is $\varepsilon$-completely tubular for every $\varepsilon >0$, we say that $\left\{x_1,\dots,x_n\right\}$ is \emph{completely tubular}.

For infinitely many generators, let $X = \left\{x_1, x_2, \dots\right\} \subset (N)_{\leq 1}^\mb{N}$ be a sequence of generators of $N$.  For $m \in \mb{N}$, let $X_m := \left\{x_1,\dots,x_m\right\}$.  We say that $X$ is \emph{completely tubular} if for any $\varepsilon > 0$ and $n \in \mb{N}$, there is an $m > n$ such that $X_m$ is $\varepsilon$-completely tubular as defined above. 
\end{dfn}

\begin{rmk}
For a separable tracial von Neumann algebra, one can always take a countable $||\cdot||_2$-dense subset of the unit ball to serve as the generators.
\end{rmk}

\begin{lem}\label{tube2}
Let $(N,\tau)$ be a separably acting tracial von Neumann algebra satisfying CEP. If any pair of embeddings of $N$ into $R^\mc{U}$ are ucp-conjugate, then for any countable $||\cdot||_2$-dense subset $X= \left\{x_j\right\}_{j \in \mb{N}} \subset (N)_{\leq 1}, X$ is completely tubular.
\end{lem}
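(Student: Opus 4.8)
The plan is to argue the contrapositive: assuming $X$ is \emph{not} completely tubular, I will manufacture a pair of embeddings $\pi,\rho\colon N\to R^{\mc{U}}$ that are not ucp-conjugate, contradicting the hypothesis. Negating Definition \ref{tube}, there are $\varepsilon>0$ and $n\in\mb{N}$ such that $X_m$ fails to be $\varepsilon$-completely tubular for \emph{every} $m>n$; I fix once and for all a single finite set by taking $m:=n+1$. This localization to a fixed finite set is the essential structural point. Indeed, ucp-conjugacy only delivers convergence of the conjugating maps \emph{one generator at a time} (coordinatewise along $\mc{U}$), so the obstruction to matching must be concentrated on a fixed finite block of coordinates in order to be detected; a naive diagonal choice $m=k$ would let the offending coordinate $j(k)$ escape to infinity along $\mc{U}$ and no contradiction would close up.

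Unwinding the failure of $\varepsilon$-complete tubularity of $X_m$, for each $k\in\mb{N}$ (with $\gamma=k^{-1}$ and degree $k$) there are $J_k\in\mb{N}$ and microstates $\xi^{(k)},\eta^{(k)}\in\Gamma(X_m;k,J_k,k^{-1})$ such that for every ucp map $\varphi\colon\mb{M}_{J_k}\to\mb{M}_{J_k}$ there is some $1\le j\le m$ with $\|\xi^{(k)}_j-\varphi(\eta^{(k)}_j)\|_{2,\mb{M}_{J_k}}\ge\varepsilon$. Since the $x_j$ are contractions, a standard functional-calculus truncation (cutting each microstate to the spectral part of $|\,\cdot\,|$ below $1+\delta$, whose complementary mass is controlled by the matching of high-degree moments) lets me assume $\xi^{(k)},\eta^{(k)}$ are contractions, at the cost of shrinking $\varepsilon$ slightly.

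The crux, and the step I expect to be the main obstacle, is that $\xi^{(k)},\eta^{(k)}$ are microstates only of the finite set $X_m$, whereas Lemma \ref{tracepreserve} needs a trace-preserving $*$-homomorphism on the \emph{entire} generating $*$-algebra in order to produce an embedding of $N$. I would therefore upgrade these witnesses to honest microstates of all of $X$: using that $N$ satisfies CEP (so $\Gamma_1(X_k;k,J_k,k^{-1})$ is eventually nonempty for every $k$), I aim to complete $\xi^{(k)},\eta^{(k)}$ to full microstates $\widehat\xi^{(k)},\widehat\eta^{(k)}\in\Gamma_1(X_{k};k,J_k,\gamma_k)$, living in the \emph{same} $\mb{M}_{J_k}$ and agreeing with $\xi^{(k)},\eta^{(k)}$ on the first $m$ coordinates up to a $\|\cdot\|_2$-error $\delta_k\to 0$. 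Preserving the non-matching behaviour on the coordinates $1,\dots,m$ while simultaneously modelling all the remaining generators is exactly where the difficulty lies, because the completion must respect the joint moments of $X_m$ with the tail of $X$; I anticipate needing a matricial microstate-extension argument built on CEP (or an amalgamation inside $\prod_{k\to\mc{U}}\mb{M}_{J_k}$), and crucially the block carrying the obstruction must retain full trace-weight — a low-weight direct sum against a reference microstate would complete the tuple but dilute the obstruction below the threshold $\varepsilon$ and so must be avoided.

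Granting the completion, set $\pi(x_j):=(\widehat\eta^{(k)}_j)_{\mc{U}}$ and $\rho(x_j):=(\widehat\xi^{(k)}_j)_{\mc{U}}$. Since each $\mb{M}_{J_k}$ embeds trace-preservingly into $R$, both maps are trace-preserving on the generating $*$-algebra and hence, by Lemma \ref{tracepreserve}, extend to embeddings $\pi,\rho\colon N\to\prod_{k\to\mc{U}}\mb{M}_{J_k}\subset R^{\mc{U}}$. By hypothesis they are ucp-conjugate, so by Definition \ref{wms} there are subtracial ucp maps $\psi_k\colon R\to R$ with $\rho(x)=(\psi_k(\pi(x)_k))_{\mc{U}}$; in particular $\lim_{k\to\mc{U}}\|\widehat\xi^{(k)}_j-\psi_k(\widehat\eta^{(k)}_j)\|_2=0$ for each fixed $j$. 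Composing with the trace-preserving conditional expectation $\mb{E}_k\colon R\to\mb{M}_{J_k}$ gives ucp maps $\varphi_k:=\mb{E}_k\circ\psi_k|_{\mb{M}_{J_k}}$ on $\mb{M}_{J_k}$; as $\mb{E}_k$ fixes $\mb{M}_{J_k}$ and is $\|\cdot\|_2$-contractive, I retain $\lim_{k\to\mc{U}}\|\widehat\xi^{(k)}_j-\varphi_k(\widehat\eta^{(k)}_j)\|_2=0$ for each fixed $j$. Using $\|\widehat\xi_j-\xi_j\|_2,\|\widehat\eta_j-\eta_j\|_2\le\delta_k$ together with the $\|\cdot\|_2$-contractivity of $\varphi_k$ gives $\|\xi^{(k)}_j-\varphi_k(\eta^{(k)}_j)\|_2\to 0$ along $\mc{U}$ for each fixed $j$. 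Because $m$ is fixed, the finitely many $\mc{U}$-large sets $\{k:\|\xi^{(k)}_j-\varphi_k(\eta^{(k)}_j)\|_2<\varepsilon\}$ for $1\le j\le m$ intersect in a set of $\mc{U}$; for any $k$ there, the single ucp map $\varphi_k$ matches all $m$ coordinates within $\varepsilon$, contradicting the failure of tubularity recorded in the second paragraph. This contradiction shows that $X$ must be completely tubular.
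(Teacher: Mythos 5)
Your reduction to a fixed finite block $X_m$ with $m=n+1$ creates the gap that you yourself flag as ``the crux'': the pathological pairs $\xi^{(k)},\eta^{(k)}$ witness only the moments of $X_m$, and you never actually construct the completed microstates $\widehat\xi^{(k)},\widehat\eta^{(k)}$ of $X_k$ agreeing with them up to $\delta_k\to 0$ on the first $m$ coordinates --- you only say you ``aim to'' and ``anticipate needing'' such an argument. This is not a routine technicality. In ultraproduct language you are asking that the embedding of $W^*(x_1,\dots,x_m)$ into $\prod_{k\to\mc{U}}\mb{M}_{J_k}$ determined by a pathological pair extend, after an arbitrarily small perturbation, to an embedding of all of $N$. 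But the negation of $\varepsilon$-complete tubularity only supplies pathological pairs in the absolute microstate space $\Gamma(X_m;k,J,\gamma)$; nothing guarantees that any of them lie near the image of the restriction map from $\Gamma(X_k;\cdot)$, so you cannot simply choose them to extend, and CEP gives no such relative statement. You correctly rule out the one cheap completion (a low-weight direct sum against a reference microstate), but you put nothing in its place, so the argument does not close.

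The paper takes exactly the diagonal route you dismiss: for each $k>n_0$ it picks a pathological pair in $\Gamma(X_k;k,J_k,k^{-1})$, so the tuples are microstates of \emph{all} of $X_k$ and assemble, via Lemma \ref{tracepreserve}, into honest embeddings $\sigma_1,\sigma_2\colon N\to R^{\mc{U}}$ with no extension step at all. The price is the issue you identify: one must prevent the offending coordinate $j(k)$ from escaping to infinity. The paper handles this by producing a set $K\in\mc{U}$ on which the compressed ucp maps $\mb{E}_{\mb{M}_{J_k}}\circ\varphi_k|_{\mb{M}_{J_k}}$ match \emph{all} coordinates $1\le j\le k$ within $\varepsilon$, and then a single $k\in K$ with $k>n_0$ contradicts pathology. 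So your instinct that the diagonal choice carries a uniformity burden is fair, but that burden is the one the paper elects to carry, whereas the extension problem your version runs into is not addressed by any tool in the paper and appears substantially harder. As written, your proposal is incomplete at its central step.
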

\begin{proof}
Suppose not. Then for some dense subset $X := \left\{x_j\right\}_{j \in \mb{N}} \subset (N)_{\leq 1}$, there is an $\varepsilon > 0$ and an $n_0 \in \mb{N}$ such that for every $k > n_0, X_{k}:=\left\{x_1,\dots, x_{k}\right\}$ is not $\varepsilon$-completely tubular. So for $k> n_0$ there exist  $J_k\in \mb{N}$ and a pathological pair of microstates $\xi^{(k)},\eta^{(k)}\in \Gamma(X_k;k,J_k,k^{-1})$ so that for any choice of ucp map $\varphi: \mb{M}_{J_k}  \rightarrow  \mb{M}_{J_k} $, \[||\xi^{(k)}_j-\varphi(\eta^{(k)}_j)||_{2,\mb{M}_{J_k}}\geq\varepsilon\] for every $1 \leq j \leq k$. 
Consider $\mb{M}_{J_k} \subset R$ as a unital subalgebra. Our choice of the generalized microstates $\xi^{(k)}$ and $\eta^{(k)}$ gives access to two embeddings $\sigma_1, \sigma_2: N \rightarrow R^{\mc{U}}$ given by $\sigma_1(x_j)= (\xi^{(k)}_j)_{\mc{U}}$ and  $\sigma_2(x_j)= (\eta^{(k)}_j)_{\mc{U}}$ for every $j \in \mb{N}$ (where $\xi^{(k)}_j = \eta^{(k)}_j = 0$ if $j>k$). From the hypothesis, there exist ucp maps $\varphi_k:R\to R$ such that $(\varphi_k(\xi^{(k)}_j))_{\mc{U}}=(\eta^{(k)}_j)_{\mc{U}}$ for every $j \in \mb{N}$. Thus there is a $K \in \mc{U}$ so that for all $k \in K$ \[||\varphi_k(\xi^{(k)}_j) - \eta^{(k)}_j||_{2,R}<\frac{\varepsilon}{2}\] for every $1 \leq j \leq k$. Let $\mb{E}_{\mb{M}_{J_k}}$ denote the conditional expectation of $R$ onto $\mb{M}_{J_k}$.  It follows that \[||\mb{E}_{\mb{M}_{J_k}}(\varphi_k(\xi^{(k)}_j)) - \eta^{(k)}_j||_2 < \frac{\varepsilon}{2}\] for every $1 \leq j \leq k$.   Then for $k \in K \cap \left\{k: k > n_0\right\} (\neq \emptyset)$, we have that the ucp map $\psi_k:=\mb{E}_{\mb{M}_{J_k}} \circ \varphi_k|_{\mb{M}_{J_k}}: \mb{M}_{J_k} \rightarrow \mb{M}_{J_k}$ satisfies \[||\psi_k(\xi^{(k)}_j) - \eta^{(k)}_j||_2 < \varepsilon\] for every $1 \leq j \leq k$--a contradiction.
\end{proof}

\begin{lem}\label{tube3}
Let $(N,\tau)$ be a separably acting tracial von Neumann algebra satisfying CEP. If for any countable $||\cdot||_2$-dense subset $X = \left\{x_j\right\}_{j \in \mb{N}} \subset (N)_{\leq 1}, X$ is completely tubular, then $N$ is injective.
\end{lem}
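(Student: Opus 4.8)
The plan is to verify Kishimoto's criterion (Proposition \ref{kishimoto}): given a finite $F\subset(N)_{\le1}$ and $\varepsilon>0$, I must produce $J\in\mb N$ and a ucp map $\rho:\mb M_J\to N$ with $F\subset_{||\cdot||_2,\varepsilon}\rho((\mb M_J)_{\le1})$. Since $X=\{x_j\}$ is $||\cdot||_2$-dense in $(N)_{\le1}$, choose $n$ with $F\subset_{||\cdot||_2,\varepsilon/2}X_n$; it then suffices to approximate $X_m$ for any $m>n$. Fix a target accuracy $\delta<\varepsilon/2$ and feed complete tubularity of $X$ the pair $(\delta,n)$ to obtain $m>n$ such that $X_m$ is $\delta$-completely tubular, with tubularity constants $k\in\mb N$ and $\gamma>0$: for every $J$ and every pair $\xi,\eta\in\Gamma(X_m;k,J,\gamma)$ there is a ucp $\varphi:\mb M_J\to\mb M_J$ with $||\xi_j-\varphi(\eta_j)||_2<\delta$ for $j\le m$. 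Using CEP as in \S\ref{prelim}, fix a microstate embedding $\sigma:N\hookrightarrow R^\mc U$ with $\sigma(x_j)=(\xi^{(l)}_j)_\mc U$, where $\xi^{(l)}\in\Gamma(X_l;l,J_l,l^{-1})$ and each $\xi^{(l)}_j$ lies in a unital matrix subalgebra $\mb M_{J_l}\subset R$. Let $\mb E:R^\mc U\to\sigma(N)$ be the trace-preserving conditional expectation, so $\mb E\circ\sigma=\sigma$ and $\mb E$ is $||\cdot||_2$-contractive.

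Now fix one large index $l_0\ge\max(m,k,\lceil\gamma^{-1}\rceil)$, set $J:=J_{l_0}$, and propose the contractions $\xi^{(l_0)}_j\in(\mb M_{J_{l_0}})_{\le1}$ as the Kishimoto witnesses; the task is a ucp $\rho:\mb M_{J_{l_0}}\to N$ with $\rho(\xi^{(l_0)}_j)\approx x_j$. The point is that $\xi^{(l_0)}$ and each $\xi^{(l)}$ are microstates of the \emph{same} $X_m$, hence ucp-comparable once placed in a common matrix algebra. For $l\ge l_0$ set $d_l:=J_{l_0}J_l$ and observe $\xi^{(l_0)}\otimes1_{J_l}$ and $1_{J_{l_0}}\otimes\xi^{(l)}$ both lie in $\Gamma(X_m;k,d_l,\gamma)$, since tensoring with $1$ preserves all moments. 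Tubularity yields a ucp $\varphi_l:\mb M_{d_l}\to\mb M_{d_l}$ with $||\,1\otimes\xi^{(l)}_j-\varphi_l(\xi^{(l_0)}_j\otimes1)\,||_2<\delta$ for $j\le m$. Choosing a unital embedding $\kappa_l:\mb M_{d_l}=\mb M_{J_{l_0}}\otimes\mb M_{J_l}\hookrightarrow R$ whose $\mb M_{J_l}$-leg is the \emph{given} copy $\mb M_{J_l}\subset R$ (possible since $R\cong\mb M_{J_l}\bar\otimes(\mb M_{J_l}'\cap R)$ and the relative commutant again contains a unital $\mb M_{J_{l_0}}$), I define the ucp map $\theta_l:=\kappa_l\circ\varphi_l(\,\cdot\otimes1):\mb M_{J_{l_0}}\to R$, which satisfies $||\theta_l(\xi^{(l_0)}_j)-\xi^{(l)}_j||_{2,R}<\delta$. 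Assembling gives a ucp map $\Theta:=(\theta_l)_\mc U:\mb M_{J_{l_0}}\to R^\mc U$ with $||\Theta(\xi^{(l_0)}_j)-\sigma(x_j)||_2\le\delta$. Setting $\rho:=\sigma^{-1}\circ\mb E\circ\Theta$ and using that $\mb E$ fixes $\sigma(x_j)$ and is $||\cdot||_2$-contractive while $\sigma^{-1}$ is $||\cdot||_2$-isometric gives $||\rho(\xi^{(l_0)}_j)-x_j||_2\le\delta$ for $j\le m$. Thus $X_m\subset_{||\cdot||_2,\delta}\rho((\mb M_{J_{l_0}})_{\le1})$, and combining with the density estimate to cover $F$ verifies Proposition \ref{kishimoto}, so $N$ is injective.

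The heart of the argument, and the step I expect to be the main obstacle, is the passage from the abstract matrix-level ucp map supplied by tubularity to a genuine trace-norm approximation of the elements $x_j$ \emph{inside $N$}. Tubularity only compares two microstates living in a common matrix algebra, whereas the diagonal microstates $\xi^{(l)}$ realizing $\sigma$ live in matrix algebras of growing, unrelated dimensions $J_l$ and sit in a priori unrelated copies inside $R$. Reconciling this forces two careful maneuvers: padding by $\otimes1$ to impose a common matrix size $d_l$ while preserving moments, and invoking factoriality of $R$ to position the padded copy so that its relevant leg coincides with the copy used to define $\sigma$. Only after this alignment do $\mb E$ and $\sigma^{-1}$ transport the matrix estimate faithfully onto $x_j\in N$. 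A secondary technical point is to track the quantifiers of Definition \ref{tube} so that a single choice of $(k,\gamma)$ services all the padded pairs at once, which is precisely why $l_0$ must be taken large relative to $k$ and $\gamma^{-1}$.
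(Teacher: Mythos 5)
Your proposal is correct and follows essentially the same route as the paper: tensor-pad a fixed reference microstate and the microstates realizing an embedding of $N$ into $R^\mc{U}$ so they live in a common matrix algebra, invoke complete tubularity to get ucp maps between them, assemble these along the ultrafilter into a ucp map $\mb{M}_J \to R^\mc{U}$, compress by the conditional expectation onto the copy of $N$, and verify Kishimoto's criterion. The only differences are cosmetic (you reuse $\xi^{(l_0)}$ as the reference microstate where the paper picks a fresh $\eta$, and you align the matrix legs inside $R$ via the relative commutant rather than a fixed decomposition $R = R_1 \otimes R_2$).
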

\begin{proof}
We wish to apply Proposition \ref{kishimoto}.  So fix a finite subset $F \subset (N)_{\leq 1}$ and $\varepsilon > 0$.  Let $X:= \left\{x_j\right\}_{j \in \mb{N}}$ be a countable $||\cdot||_2$-dense subset of $(N)_{\leq 1}$ such that $F \subset X$.  For each $k \in \mb{N}$, define $X_k:=\left\{x_1,\dots, x_k\right\}$.

Since $N$ satisfies CEP, for each $k \in \mb{N}$ there exists \[\xi^{(k)} = (\xi^{(k)}_1,\dots,\xi^{(k)}_k) \in \Gamma(X_k;k,J_k,k^{-1}).\]  Write $R=R_1\otimes R_2$ for $R_i \cong R, i = 1,2$, and for $j \in \mb{N}$ define $y_j:=(1_{R_1}\otimes \xi^{(k)}_j)_{\mc{U}}\in R^{\mc{U}}$. Construct an embedding $\pi: N\to R^{\mc{U}}$ given by $\pi(x_j)=y_j$ for $j \in \mb{N}$. Let $k_0$ be such that $F \subset X_{k_0}$ and $X_{k_0}$ is $\frac{\varepsilon}{2}$-completely tubular.  So there exists $k_1 \in \mb{N}$ such that for any $J \in \mb{N}$ and any $\xi, \eta \in \Gamma(X_{k_0}; k_1, J, k_1^{-1})$ there is a ucp map $\varphi: \mb{M}_J \rightarrow \mb{M}_J$ such that \[||\varphi(\eta_j) - \xi_j||_2 < \frac{\varepsilon}{2}\] for $1 \leq j \leq k_0$. Fix $J \in \mb{N}$ so that $\Gamma(X_{k_0}; k_1, J, k_1^{-1})$ is nonempty and let $\eta = (\eta_1,\dots, \eta_{k_0}) \in \Gamma(X_{k_0}; k_1, J, k_1^{-1})$. For each $k \geq \max\left\{k_0,k_1\right\}$, note that $(\eta_1\otimes I_{J_k},\dots, \eta_{k_0}\otimes I_{J_k})$ and $(I_{J} \otimes \xi^{(k)}_1,\dots, I_J\otimes \xi^{(k)}_{k_0})$ are both microstates in $\Gamma(X_{k_0};k_1, J \cdot J_k, k_1^{-1})$.   So by $\frac{\varepsilon}{2}$-complete-tubularity, there is a ucp map $\varphi_k: \mb{M}_J \otimes \mb{M}_{J_k}  \rightarrow \mb{M}_J \otimes \mb{M}_{J_k}$  such that \[|| \varphi_k(\eta_j\otimes I_{J_k}) - I_J\otimes \xi^{(k)}_j ||_{2, \mb{M}_{J\cdot J_k}}< \frac{\varepsilon}{2}\] for $1 \leq j \leq k_0$.  We define the ucp map $\rho: \mb{M}_J\rightarrow R^{\mc{U}} $ given by $\rho(x)= (\varphi_k(x\otimes I_{J_k}))_{\mc{U}}$ (here we consider $\mb{M}_{J} \subset R_1$ and $\mb{M}_{J_k} \subset R_2$).  Finally note that $\rho$ satisfies $\inf_{B\in M_J}||y_j- \rho(B)||_{2, R^\mc{U}}\leq \frac{\varepsilon}{2} < \varepsilon$ for every $1 \leq j \leq k_0$.  Thus by composing $\rho$ with the conditional expection onto $N$, we can apply Proposition \ref{kishimoto}, and it follows that $N$ is injective.
\end{proof}

Lemmas \ref{tube2} and \ref{tube3} combine to prove the following theorem.
 
\begin{thm}\label{jungv2}
Let $(N,\tau)$ be a separably acting tracial von Neumann algebra satisfying CEP. Then any pair of embeddings of $N$ into $R^\mc{U}$ are ucp-conjugate if and only if $N$ is injective.  
\end{thm}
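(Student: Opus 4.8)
The plan is to prove Theorem \ref{jungv2} by combining the two lemmas that have just been established, which together furnish one direction, and then supplying the reverse (easy) direction via the earlier self-stability/ucp-stability machinery. Concretely, for the forward direction---assuming any pair of embeddings of $N$ into $R^\mc{U}$ are ucp-conjugate---I would invoke Lemma \ref{tube2} to conclude that any countable $||\cdot||_2$-dense subset $X \subset (N)_{\leq 1}$ is completely tubular, and then feed this conclusion directly into Lemma \ref{tube3} to deduce that $N$ is injective. This is the heart of the argument, and the two lemmas have been stated precisely so that their hypotheses and conclusions chain together: the output of Lemma \ref{tube2} (complete tubularity of every dense subset) is exactly the hypothesis of Lemma \ref{tube3}. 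So the forward direction is essentially a one-line composition once the lemmas are in hand.

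For the converse, I would argue that injectivity of $N$ implies any two embeddings into $R^\mc{U}$ are ucp-conjugate. The cleanest route is to observe that when $N$ is injective (hence amenable), it is self-ucp stable by Theorem \ref{tschar}, and more to the point, any embedding of $N$ into $R^\mc{U}$ should be comparable to a fixed reference embedding. Alternatively, and more directly, I would use the fact that for amenable $N$ any two embeddings into $R^\mc{U}$ are even \emph{unitarily} conjugate---this follows from weak approximate unitary equivalence (the analogue of Proposition \ref{waue} for injective $N$, cf. Theorem \ref{saasep}) together with the lifting of unitaries from $R^\mc{U}$ to sequences of unitaries in $R$. As noted in the text immediately before the definition of complete tubularity, unitary conjugacy implies ucp-conjugacy (conjugation by a unitary $u = (u_k)_\mc{U}$ is implemented by the sequence of inner ucp maps $\varphi_k(\cdot) = u_k (\cdot) u_k^*$). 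So the converse reduces to the known fact that embeddings of an amenable algebra into $R^\mc{U}$ are unitarily conjugate.

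The main obstacle in this whole development is not in the final assembly---which is routine---but rather it lives entirely inside Lemma \ref{tube3}, whose proof has already been recorded. The genuinely delicate point there is the tensor-stabilization trick: one must produce, for each approximating matrix dimension $J_k$, two microstates $(\eta_j \otimes I_{J_k})$ and $(I_J \otimes \xi^{(k)}_j)$ that both land in the same microstate space $\Gamma(X_{k_0}; k_1, J \cdot J_k, k_1^{-1})$, so that complete tubularity applied at level $k_0$ yields a single ucp map $\varphi_k$ conjugating one family to the other up to $\varepsilon/2$. Assembling these $\varphi_k$ into a ucp map $\rho: \mb{M}_J \to R^\mc{U}$ and then checking the approximation hypothesis $\inf_{B \in \mb{M}_J} ||y_j - \rho(B)||_{2,R^\mc{U}} < \varepsilon$ required by Kishimoto's characterization (Proposition \ref{kishimoto}) is where all the real content sits. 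Since both lemmas are assumed established, for the theorem itself I expect no obstacle: the proof is simply the sentence ``Lemmas \ref{tube2} and \ref{tube3} give the forward implication; the converse follows since embeddings of an injective algebra into $R^\mc{U}$ are unitarily conjugate, hence ucp-conjugate.''
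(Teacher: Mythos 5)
Your proposal is correct and matches the paper's argument: the paper proves Theorem \ref{jungv2} precisely by combining Lemmas \ref{tube2} and \ref{tube3} for the forward direction, and the converse is the observation (made just before Definition \ref{tube}) that for injective, hence hyperfinite, $N$ any two embeddings into $R^\mc{U}$ are unitarily conjugate, and unitaries lift to sequences of unitaries giving the required (subtracial) ucp maps $\varphi_k = \mathrm{Ad}\,u_k$. No substantive difference from the paper's proof.
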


By way of conditional expectations, we obtain the following more general version of Theorem \ref{jungv2}.

\begin{cor}\label{jungv2cor}
Let $(N,\tau)$ be a separably acting tracial von Neumann algebra satisfying CEP, and for each $k \in \mb{N}$ let $M_k$ be a II$_1$-factor. Then any two embeddings of $N$ into $\ds\prod_{k\rightarrow \mc{U}}M_k$ are ucp-conjugate if and only if $N$ is injective.  In particular, any two embeddings of $N$ into $\ds\prod_{k\rightarrow \mc{U}}M_k$ are unitarily conjugate if and only $N$ is injective.
\end{cor}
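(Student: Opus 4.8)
The plan is to deduce Corollary \ref{jungv2cor} from Theorem \ref{jungv2} by transferring statements about embeddings into an arbitrary ultraproduct $\prod_{k\to\mc{U}}M_k$ to statements about embeddings into $R^\mc{U}$, using the fact that $R$ sits inside each $M_k$ with a trace-preserving conditional expectation. First I would fix a separable copy of $R$ inside each II$_1$-factor $M_k$, which is always possible, and let $\mb{E}_k: M_k\to R$ denote the trace-preserving conditional expectation. These assemble to a trace-preserving ucp map $\mb{E} = (\mb{E}_k)_\mc{U}: \prod_{k\to\mc{U}}M_k \to R^\mc{U}$. Likewise the inclusions $R\hookrightarrow M_k$ assemble to an embedding $\iota: R^\mc{U}\hookrightarrow \prod_{k\to\mc{U}}M_k$ with $\mb{E}\circ\iota = \mathrm{id}$.

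The backward direction is immediate: if $N$ is injective, then by Theorem \ref{jungv2} any two embeddings of $N$ into $R^\mc{U}$ are ucp-conjugate, and composing with $\iota$ shows any two embeddings into $\prod_{k\to\mc{U}}M_k$ that factor through $R^\mc{U}$ are ucp-conjugate. More care is needed since a general embedding into $\prod_{k\to\mc{U}}M_k$ need not factor through $R^\mc{U}$, but since $N$ is injective (hence, by CEP, Connes-embeddable into $R^\mc{U}$) the constant-sequence type construction via microstates still lands in $R^\mc{U}\subset\prod_{k\to\mc{U}}M_k$, and the weak approximate unitary equivalence machinery from Proposition \ref{waue} applies to any embedding of the injective algebra $N$; this yields unitary conjugacy, which is stronger than ucp-conjugacy once one lifts unitaries. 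For the forward direction, suppose any two embeddings of $N$ into $\prod_{k\to\mc{U}}M_k$ are ucp-conjugate. Given two embeddings $\pi,\rho: N\to R^\mc{U}$, I would push them forward via $\iota$ to obtain embeddings $\iota\circ\pi,\iota\circ\rho: N\to\prod_{k\to\mc{U}}M_k$. By hypothesis there is a sequence of subtracial ucp maps $\psi_k: M_k\to M_k$ with $\iota\circ\rho = (\psi_k)_\mc{U}\circ(\iota\circ\pi)$. Now I compress back: the maps $\varphi_k := \mb{E}_k\circ\psi_k|_R: R\to R$ are subtracial ucp, and applying $\mb{E}=(\mb{E}_k)_\mc{U}$ to the conjugation identity while using $\mb{E}\circ\iota = \mathrm{id}$ on $R^\mc{U}$ gives $\rho = (\varphi_k)_\mc{U}\circ\pi$. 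Hence any two embeddings of $N$ into $R^\mc{U}$ are ucp-conjugate, and Theorem \ref{jungv2} forces $N$ to be injective.

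For the final \emph{unitary} conjugacy statement, the ``only if'' direction is again Proposition \ref{waue}: once $N$ is injective it is a quotient of $R$ (indeed isomorphic to $R$ or a hyperfinite factor/finite-dimensional piece, but certainly a separable amenable algebra absorbing into $R$), and two embeddings of an amenable algebra into any II$_1$-factor $\prod_{k\to\mc{U}}M_k$ are weakly approximately unitarily equivalent; since unitaries in $\prod_{k\to\mc{U}}M_k$ lift to sequences of unitaries, weak approximate unitary equivalence upgrades to genuine unitary conjugacy by the argument in Proposition \ref{Rss} (applying the relevant automorphism-of-ultrapower result). For the ``if'' direction, unitary conjugacy implies ucp-conjugacy by the functional-calculus lifting of unitaries noted just before Lemma \ref{tube2}, so injectivity follows from the already-established equivalence.

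The main obstacle I anticipate is the backward/``only if'' direction of the unitary statement, where I must ensure that two embeddings of an injective $N$ into an \emph{arbitrary} ultraproduct $\prod_{k\to\mc{U}}M_k$—not merely $R^\mc{U}$—are weakly approximately unitarily equivalent. The subtlety Popa's question highlights is that $\prod_{k\to\mc{U}}M_k$ may contain many more unitaries than $R^\mc{U}$, so I must verify that the weak approximate unitary equivalence result (Proposition \ref{waue} and its consequences) genuinely applies with this larger target factor, rather than relying on features specific to $R^\mc{U}$; this is exactly the point where having $R$ complemented in each $M_k$ via $\mb{E}_k$ does the work, letting me transport the approximate intertwiners produced inside $R^\mc{U}$ into $\prod_{k\to\mc{U}}M_k$ while controlling trace-norms.
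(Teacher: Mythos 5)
Your proposal is correct and follows essentially the same route as the paper: the forward direction pushes two embeddings $N\to R^\mc{U}$ into $\prod_{k\to\mc{U}}M_k$ via fixed inclusions $R\subset M_k$, compresses the resulting ucp conjugators back to $R$ with the trace-preserving conditional expectations $\mb{E}_k$, and invokes Theorem \ref{jungv2}; the backward direction reduces to the fact that an injective $N$ is hyperfinite, so any two embeddings into an ultraproduct of II$_1$-factors are unitarily (hence ucp-) conjugate, with the ``in particular'' clause following from lifting unitaries to sequences of unitaries. Your detour through microstates and Proposition \ref{waue} in the backward direction is more elaborate than needed (finite-dimensional approximation plus the standard reindexing argument suffices, which is all the paper says), but it is not wrong.
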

\begin{proof}
Assume that any two embeddings of $N$ into $\ds\prod_{k\rightarrow \mc{U}}M_k$ are ucp-conjugate. Let $\sigma_1,\sigma_2: N\rightarrow R^{\mc{U}}$ be two embeddings. Extend these embeddings to $\ds\tilde{\sigma}_i: N\rightarrow \prod_{k\rightarrow \mc{U}}M_k$ through inclusions $\iota_k: R\rightarrow M_k$.  That is, $\tilde{\sigma}_i = (\iota_k)_\mc{U} \circ \sigma_i, i = 1,2$. From the assumption there are ucp maps $\varphi_k: M_k\to M_k$, satisfying $(\varphi_k)_\mc{U}\circ \tilde{\sigma}_1=\tilde{\sigma}_2$. Let $\mb{E}_{\iota_k(R)}$ denote the conditional expectation from $M_k$ onto $\iota_k(R)$. Then taking $\psi_k:=\mb{E}_{\iota_k(R)}\circ\varphi_k|_{\iota(R)}$ we obtain $(\psi_k)_\mc{U} \circ \sigma_1 = \sigma_2$. From Theorem \ref{jungv2}, $N$ is injective. 


If $N$ is injective, it is hyperfinite by \cite{connes}, and any two embeddings of $N$ are unitarily conjugate.

The \tql In particular\tqr part of the statement of this Corollary follows from the fact that any unitary $\ds u \in \prod_{k\rightarrow \mc{U}}M_k$ is of the form $u = (u_k)_\mc{U}$ where $u_k$ is a unitary in $M_k$.
\end{proof}

In \cite{saa}, one can find a separable formulation of Jung's theorem, stated as follows.

\begin{thm}[\cite{saa}]\label{saasep}
Let $(N,\tau)$ be a separably acting tracial finitely generated von Neumann algebra satisfying CEP.  Then $N$ is amenable if and only if for every separable McDuff II$_1$-factor $M$, any two embeddings $\pi,\rho: N \rightarrow M$ are weakly approximately unitarily equivalent.
\end{thm}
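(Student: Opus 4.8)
The plan is to prove the two implications separately, the forward one being elementary and the reverse being the crux. For the forward direction, suppose $N$ is amenable, hence hyperfinite, and fix an increasing sequence of finite-dimensional $*$-subalgebras $A_k$ with $\overline{\bigcup_k A_k} = N$ in $\|\cdot\|_2$. Given any separable II$_1$-factor $M$ and two embeddings $\pi,\rho:N\to M$, each pair of restrictions $\pi|_{A_k},\rho|_{A_k}$ is a trace-preserving unital embedding of a finite-dimensional algebra into a factor; since in a II$_1$-factor projections of equal trace are Murray--von Neumann equivalent, a block-by-block matching of matrix units gives $u_k\in\mc{U}(M)$ with $u_k\rho(a)u_k^*=\pi(a)$ for $a\in A_k$. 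Density of $\bigcup_k A_k$ then yields $\pi\sim_\text{waue}\rho$. This direction uses neither CEP nor the McDuff hypothesis.

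For the reverse direction I would argue the contrapositive and build an explicit witness. Assume $N$ is not amenable. By CEP, $N$ embeds into $R^\mc{U}$, which is a McDuff II$_1$-factor, and since $N$ is not injective, Corollary \ref{jungv2cor} (with $M_k=R$) provides two embeddings $\alpha,\beta:N\to R^\mc{U}$ that are not unitarily conjugate in $R^\mc{U}$. I then pass to a separable target: using that $R^\mc{U}$ is McDuff, I choose a separable McDuff subfactor $M\subseteq R^\mc{U}$ containing $\alpha(N)\cup\beta(N)$ --- for instance by realizing $M\cong M_0\bar\otimes R$, where $M_0=W^*(\alpha(N),\beta(N))$ (arranged to be a factor) and $R$ is a trace-independent copy sitting in the relative commutant $M_0'\cap R^\mc{U}$. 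We then regard $\alpha,\beta$ as honest embeddings of $N$ into this separable McDuff factor $M$.

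It remains to see that $\alpha$ and $\beta$ are not waue in $M$. Suppose they were; then the conjugating unitaries lie in $\mc{U}(M)\subseteq\mc{U}(R^\mc{U})$, so $\alpha\sim_\text{waue}\beta$ already as embeddings into $R^\mc{U}$. By countable saturation of the ultrapower $R^\mc{U}$ (equivalently, the argument of Theorem 3.1 of \cite{autoultra}), the countable, finitely-approximately-satisfiable type requiring a unitary that conjugates $\beta$ to $\alpha$ along a $\|\cdot\|_2$-dense sequence in $N$ is realized, so $\alpha$ and $\beta$ are genuinely unitarily conjugate in $R^\mc{U}$ --- contradicting the choice of $\alpha,\beta$. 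Hence $M$ together with the pair $\alpha,\beta$ witnesses the failure of the waue condition, completing the contrapositive.

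I expect the main obstacle to be the transfer in the second paragraph: one must descend the non-conjugate pair from the non-separable $R^\mc{U}$ to a separable McDuff factor without coarsening the unitary orbits or accidentally identifying the two embeddings. This is exactly where the McDuff absorption (the trace-independent copy of $R$ in the relative commutant) together with the saturation upgrade from approximate to honest conjugacy do the essential work. By comparison, the finite-dimensional conjugacy lemma and the descent of unitaries along $M\subseteq R^\mc{U}$ are routine.
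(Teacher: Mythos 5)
Your argument is correct and follows essentially the same route as the paper: Theorem \ref{saasep} is imported from \cite{saa} without a proof here, but your hard direction---non-conjugate embeddings into $R^\mc{U}$ from Jung's theorem/Corollary \ref{jungv2cor}, descent to a separable McDuff subfactor built from $W^*(\alpha(N),\beta(N))$ and a copy of $R$ in its relative commutant (this is exactly Lemma 3.15 of \cite{saa}), and the saturation upgrade from weak approximate unitary equivalence to honest unitary conjugacy via Theorem 3.1 of \cite{autoultra}---is precisely the strategy the paper itself deploys to prove the analogous Theorem \ref{wauets}. The easy direction via hyperfiniteness and trace-preserving conjugacy of finite-dimensional subalgebras is the standard one.
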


\noin  Theorem \ref{jungv2} is a ucp version of Jung's theorem, and Theorem \ref{saasep} is a separable version of Jung's theorem.  Thus it is natural to consider a separable version of Theorem \ref{jungv2} (or a ucp version of Theorem \ref{saasep}).  We therefore ask the following question.

\begin{?}\label{waucpq}
Let $(N,\tau)$ be a separably acting tracial von Neumann algebra satisfying CEP.  If for any II$_1$-factor $M$ and any two embeddings $\pi, \rho: N \rightarrow M$ there is a sequence of ucp maps $\varphi_n: M \rightarrow M$ such that for every $x \in N,$ $\ds \lim_{n\rightarrow \infty} ||\pi(x) - \varphi_n(\rho(x))||_2 =0$, then does it follow that $N$ is amenable?
\end{?}

\noin The proof of Theorem \ref{saasep} uses the fact that given a unitary $u \in R^\mc{U}$, there is a sequence of unitaries $u_k \in R$ such that $u = (u_k)_\mc{U}$.  Thus the map $\text{Ad} u$ given by $\text{Ad} u (x) = u^*x u$ can be expressed as $\text{Ad} u = (\text{Ad} u_k)_\mc{U}$, and Jung's theorem can be converted into separable language accordingly.  Unfortunately, this approach breaks down in the more general ucp setting, because given a ucp map $\varphi: R^\mc{U} \rightarrow R^\mc{U}$, it does not necessarily decompose as $\varphi = (\varphi_k)_\mc{U}$ for ucp maps $\varphi_k: R \rightarrow R$.  Indeed, any conditional expectation of $R^\mc{U}$ onto a nontrivial separable subfactor will not have such a decomposition.  With this obstruction, a resolution of the above question using existing techniques is not clear. 

Based on our discussion of  ucp-conjugation, it is natural to consider the relationship between ucp-conjugation and unitary equivalence.  To ensure we have an equivalence relation in terms of ucp-conjugation, we make the following definition.

\begin{dfn}
Let $(N,\tau)$ be a tracial von Neumann algebra, and for each $k \in \mb{N}$ let $M_k$ be a II$_1$-factor.  Two embeddings $\ds \pi, \rho: N \rightarrow \prod_{k\rightarrow \mc{U}}M_k$ are \emph{doubly ucp-conjugate} or \emph{ucp-equivalent} if there are ucp maps $\varphi_k, \psi_k: M_k \rightarrow M_k$ such that $\pi = (\varphi_k)_\mc{U}\circ \rho$ and $\rho = (\psi_k)_\mc{U} \circ \pi$.
\end{dfn}

It is also of worth to ask about the relationship between ucp-conjugation and unitary conjugation in our setting. The following example shows that the notions of unitary equivalence and ucp-equivalence do not coincide in general.

\begin{exmpl}
Consider the two embeddings $\pi,\rho: L(\mb{F}_2) \rightarrow L(\mb{F}_2)^\mc{U}$ where $\pi$ is the constant-sequence embedding, and $\rho = \alpha^\mc{U} \circ \pi$ where $\alpha \in \text{Aut}(L(\mb{F}_2))$ is the automorphism on $L(\mb{F}_2)$ induced by involuting the two generators.  Note that $\alpha$ is order two, so $\pi = \alpha^\mc{U} \circ \rho$. It is well known that $\alpha$ is not approximately inner in the point-$||\cdot||_2$ topology.  It follows that $\pi$ and $\rho$ are not unitarily equivalent, but since $\alpha$ is ucp, these two embeddings are ucp-equivalent.
\end{exmpl}

\noin With the above example in mind, we formulate the following question.

\begin{?}
If two embeddings $\pi, \rho:N\to R^{\mc{U}}$ are ucp-equivalent, are they unitarily equivalent? 
\end{?}

\noin An answer to this question in the affirmative in combination with Jung's theorem would yield Theorem \ref{jungv2} as an immediate corollary.

It is of interest to ask the question in terms of general automorphisms of the ultraproduct.

\begin{?}\label{autoq}
Let $(N,\tau)$ be a separably acting tracial von Neumann algebra satisfying CEP. If any two embeddings of $N$ (separable) into $\ds \prod_{k\rightarrow \mc{U}}M_k$ are conjugate by an automorphism of $\ds \prod_{k\rightarrow \mc{U}}M_k$, then is $N$ amenable? 
\end{?}

\noin Note that Corollary \ref{jungv2cor} answers Question \ref{autoq} for inner automorphisms and automorphisms that lift to ucp maps. 
In \cite{genjung} the authors together with I. Goldbring make significant progress on this question.

%
%
%
%

\section{Popa's cardinality question}

Recall that $\HOM(N,M)$ denotes the space of embeddings of $N$ into $M$ modulo unitary equivalence, and $[\pi]$ denotes the unitary equivalence class associated to $\pi: N \rightarrow M$. Given $[\pi_n],[\pi] \in \HOM(N, M)$, we say $[\pi_n] \rightarrow [\pi]$ if there exist $\pi_n' \in [\pi_n]$ such that for any $x \in N$, $||\pi_n'(x) - \pi(x)||_2 \rightarrow 0$.  The following metric induces this topology.  Fix a countable generating (e.g., dense) subset $\left\{x_j\right\}_{j=1}^\infty \subset N_{\leq 1}$; given $[\pi], [\rho] \in \HOM(N,M)$ put \[d_M([\pi], [\rho]) = \inf_{u \in \mc{U}(M)} \left(\sum_{j=1}^\infty 2^{-2j} \left|\left|\pi(x_j) - u\rho(x_j)u^*\right|\right|_2^2\right)^\frac{1}{2}.\]

In \cite{popa}, Popa asks the following question.

\begin{b?}[\cite{popa}]
If $N$ is a separable von Neumann subalgebra of an ultraproduct II$_1$-factor $\ds \prod_{k\rightarrow \mc{U}}M_k$, then how large is $\ds\HOM(N,\prod_{k\rightarrow \mc{U}}M_k)$?
\end{b?}

\noin As an immediate response to this question, we have the following consequence of Corollary \ref{jungv2cor}.

\begin{cor}Modulo CEP, if $N$ is a non-amenable separable finite von Neumann algebra, then $\ds |\HOM(N, \prod_{k\rightarrow \mc{U}}M_k)|\geq 2$. 
\end{cor}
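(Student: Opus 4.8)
The plan is to derive this corollary directly from Corollary~\ref{jungv2cor}, which is the substantive work. Corollary~\ref{jungv2cor} asserts that for a separably acting tracial von Neumann algebra $N$ satisfying CEP, any two embeddings of $N$ into $\ds\prod_{k\rightarrow \mc{U}}M_k$ are unitarily conjugate if and only if $N$ is injective. The contrapositive of the ``only if'' direction is exactly what I need: if $N$ is \emph{not} injective (equivalently, not amenable), then there must exist at least two embeddings of $N$ into $\ds\prod_{k\rightarrow \mc{U}}M_k$ that are \emph{not} unitarily conjugate.

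First I would note that the hypotheses of the corollary give a non-amenable separable finite von Neumann algebra $N$ satisfying CEP; since for separably acting finite von Neumann algebras non-amenability is the same as non-injectivity (by the equivalences in Definition~\ref{amendef} together with \cite{connes}), the algebra $N$ fails to be injective. Applying Corollary~\ref{jungv2cor} in contrapositive form, the statement ``any two embeddings of $N$ into $\ds\prod_{k\rightarrow \mc{U}}M_k$ are unitarily conjugate'' must fail. The failure of this universally quantified statement means precisely that there exist two embeddings that are not unitarily conjugate, i.e.\ two distinct unitary equivalence classes in $\ds\HOM(N,\prod_{k\rightarrow \mc{U}}M_k)$.

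The only small point requiring care is ensuring that $\ds\HOM(N,\prod_{k\rightarrow \mc{U}}M_k)$ is nonempty to begin with, so that ``at least two classes'' is a meaningful lower bound on cardinality rather than vacuous. This is guaranteed because the corollary's setup presumes $N$ embeds into the ultraproduct (indeed the CEP assumption together with inclusions $R\hookrightarrow M_k$ yields at least one embedding, as in the proof of Corollary~\ref{jungv2cor}). Once one embedding exists and the ``all embeddings unitarily conjugate'' statement fails, there are by definition at least two unitary equivalence classes, giving $\ds |\HOM(N, \prod_{k\rightarrow \mc{U}}M_k)|\geq 2$.

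I do not anticipate a genuine obstacle here: the entire difficulty is front-loaded into Corollary~\ref{jungv2cor} (and ultimately into Theorem~\ref{jungv2} and the tubularity lemmas), so this corollary is a one-line logical consequence, essentially a restatement of the forward implication in contrapositive form together with the elementary observation that a failed universal statement over a nonempty class produces at least two witnesses. The main thing to get right is simply the bookkeeping of the logical negation and the identification of non-amenability with non-injectivity in the finite separable setting.
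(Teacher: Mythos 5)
Your argument is correct and is exactly the paper's route: the paper presents this corollary as an immediate consequence of Corollary \ref{jungv2cor}, obtained by taking the contrapositive of the ``unitarily conjugate $\Rightarrow$ injective'' direction and noting that CEP guarantees at least one embedding exists. Nothing further is needed.
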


In \cite{topdyn}, Ozawa shows in Theorem A.1 that modulo CEP, if $N$ is not amenable, then $\HOM(N, R^\mc{U})$ is not separable. In Theorem \ref{mcd} below, we use techniques from \cite{saa} to extend Ozawa's result, answering Popa's question.  Some preparation is in order before presenting the theorem.

%
%
%

\begin{dfn}[\cite{newproof}]
Let $N$ be a II$_1$-factor.  For $n \in \mathbb{N}$ and $\delta > 0$, two $n$-tuples $(u_1,\dots,u_n)$ and $(v_1,\dots,v_n)$ of unitaries in $N$ are \emph{$\delta$-related} if there is a sequence $\left\{a_j\right\} \subset N$ with
\begin{align}\label{summing}
\sum_j a_j^*a_j &=1_N = \sum_j a_ja_j^*
\end{align}
(where the convergence is SOT) such that for every $1 \leq k \leq n$,
\begin{align}\label{ineq}
\sum_j ||a_j u_k - v_ka_j||_2^2 &< \delta.
\end{align}
  We say that $\left\{a_j\right\}$ is a sequence that witnesses that $(u_1,\dots,u_n)$ and $(v_1,\dots,v_n)$ are $\delta$-related.
\end{dfn}

The next theorem was proved by Haagerup in \cite{newproof}.  The statement we present is slightly different from Haagerup's original wording.  The present version indicates the uniformity with which the $\varepsilon$ and $\delta(n, \varepsilon)$ estimates can be made and expands the scope to all finite factors.  Haagerup's proof of Theorem \ref{deltarelated} can be easily adapted (essentially exchanging ultrapowers for ultraproducts and taking matrix algebras into consideration) to prove this moderately stronger version.

\begin{thm}[\cite{newproof}]\label{deltarelated}
Fix $n \in \mb{N}$.  For every $\varepsilon > 0$ there exists a $\delta(n,\varepsilon) > 0$ such that for any finite factor $N$ and any two $\delta(n,\varepsilon)$-related $n$-tuples of unitaries $(u_1,\dots,u_n)$ and $(v_1,\dots,v_n)$ in $N$, there exists a unitary $w \in N$ such that for every $1 \leq j \leq n$ \[||wu_j - v_jw||_2 < \varepsilon.\]
\end{thm}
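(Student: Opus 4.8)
The plan is to encode the witnessing family $\{a_j\}$ as a single partial isometry in an amplification, and then to correct it to an honest unitary, carrying out the correction \emph{exactly} in an ultraproduct limit where the parameter $\delta$ has been sent to $0$. Concretely, I would pass to the amplification $\tilde N := N \otimes B(\ell^2)$ with the semifinite trace $\tilde\tau := \tau\otimes\mathrm{Tr}$ and set $V := \sum_j a_j\otimes e_{j1}$. The two summing conditions $\sum_j a_j^*a_j = 1 = \sum_j a_ja_j^*$ say precisely that $V$ is a partial isometry with $V^*V = p := 1\otimes e_{11}$ and with $(\mathrm{id}\otimes\mathrm{Tr})(q) = 1$ for $q := VV^*$, while a one-line expansion turns the defining inequality into $||Vx_k - y_kV||_{2,\tilde\tau}^2 = \sum_j ||a_ju_k - v_ka_j||_2^2 < \delta$, where $x_k := u_k\otimes e_{11}$ and $y_k := v_k\otimes 1$. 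Both $p$ and $q$ are projections of $\tilde\tau$-trace $1$, and a unitary $w\in\mathcal U(N)$ with $wu_k = v_kw$ is exactly the datum of a unitary in the corner $p\tilde Np\cong N$ intertwining $x_k$ and $y_k$; so the whole problem becomes: correct $V$ to such a corner unitary.

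The heart of the argument is the exact case $\delta = 0$, i.e. $Vx_k = y_kV$. Then, taking adjoints and inverses (available since the $u_k,v_k$ are unitary), one checks $qy_k = Vx_kV^* = y_kq$, so $q$ lies in the relative commutant $\mathcal C := (B_v\otimes 1)'\cap\tilde N = D\otimes B(\ell^2)$, where $B_v := W^*(v_1,\dots,v_n)$ and $D := B_v'\cap N$; note $p = 1\otimes e_{11}\in\mathcal C$ as well. The key computation is that $q$ and $p$ have the same \emph{center}-valued trace in $\mathcal C$: writing $\mathrm{ctr}_{\mathcal C} = \mathrm{ctr}_D\otimes\mathrm{Tr} = \mathrm{ctr}_D\circ(\mathrm{id}\otimes\mathrm{Tr})|_{\mathcal C}$ and using $(\mathrm{id}\otimes\mathrm{Tr})(q) = 1_D = (\mathrm{id}\otimes\mathrm{Tr})(p)$ gives $\mathrm{ctr}_{\mathcal C}(q) = \mathrm{ctr}_D(1) = \mathrm{ctr}_{\mathcal C}(p)$. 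Since the center-valued trace is a complete invariant for Murray--von Neumann equivalence of finite projections in a semifinite von Neumann algebra, there is a partial isometry $s\in\mathcal C$ with $s^*s = q$ and $ss^* = p$. Then $sV$ is a unitary of the corner $p\tilde Np\cong N$, say $sV = w\otimes e_{11}$, and $sVx_k = sy_kV = y_k sV$ (as $s$ commutes with $y_k$) unwinds to $wu_k = v_kw$, as desired.

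To extract the uniform $\delta(n,\varepsilon)$ from the exact case I would argue by contradiction. If it failed for some $n$ and $\varepsilon_0$, then for each $m$ there would be a finite factor $N_m$ carrying $\tfrac1m$-related tuples admitting no unitary $w$ with $\max_k||wu_k^{(m)} - v_k^{(m)}w||_2 < \varepsilon_0$. Forming the tracial ultraproduct $\mathcal N := \prod_{m\to\mathcal U}N_m$ (a II$_1$ factor) together with the corresponding semifinite ultraproduct of the $N_m\otimes B(\ell^2)$, which I would identify with $\mathcal N\otimes B(\ell^2)$, the witnessing partial isometries $V_m$ assemble to $V := (V_m)_{\mathcal U}$ satisfying $V^*V = p$, $(\mathrm{id}\otimes\mathrm{Tr})(VV^*) = 1$, and $Vx_k = y_kV$ exactly. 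Running the exact case verbatim inside $\mathcal N$ produces $w = (w_m)_{\mathcal U}\in\mathcal U(\mathcal N)$ with $wu_k = v_kw$, hence $||w_mu_k^{(m)} - v_k^{(m)}w_m||_2\to 0$ along $\mathcal U$, contradicting the choice of $N_m$.

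I expect the \textbf{main obstacle} to be two intertwined technical points. The first is that one must assemble the witnesses as the \emph{whole} elements $V_m$ rather than ultralimiting the $a_j^{(m)}$ coordinate by coordinate: the latter loses mass as $j\to\infty$ (for a diffuse family the sums $\sum_j A_j^*A_j$ can drop strictly below $1$ in the limit), whereas the identity $(V_m^*V_m)_{\mathcal U} = p$ holds on the nose. This forces the use of a semifinite (II$_\infty$) tracial ultraproduct and the careful identification of its trace-finite corner with $\mathcal N\otimes B(\ell^2)$. The second is the center-valued computation in $\mathcal C$: since $D = B_v'\cap N$ need not be a factor, it is essential that the slice condition $(\mathrm{id}\otimes\mathrm{Tr})(q) = 1$ pins down the \emph{center}-valued (not merely the scalar) trace of $q$, which is exactly what upgrades the evident scalar equality $\tilde\tau(q) = \tilde\tau(p)$ to Murray--von Neumann equivalence inside the relative commutant.
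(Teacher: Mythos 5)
Your reduction to the exact case $\delta=0$ is correct and well executed: encoding the witnessing family as the partial isometry $V=\sum_j a_j\otimes e_{j1}$, checking $V^*V=1\otimes e_{11}$ and $(\mathrm{id}\otimes\mathrm{Tr})(VV^*)=1$, locating $q=VV^*$ in $\mathcal{C}=(B_v\otimes 1)'\cap(N\otimes B(\ell^2))=(B_v'\cap N)\otimes B(\ell^2)$, and comparing $p$ and $q$ there via the center-valued dimension $\mathrm{ctr}_D\circ(\mathrm{id}\otimes\mathrm{Tr})$ is the classical intertwining argument, and each step checks out. (For the record, the paper does not prove this statement at all --- it quotes Haagerup, whose argument is a direct quantitative one with explicit $\delta(n,\varepsilon)$ --- so your route is genuinely your own.)

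The gap is in the uniformization step, and it is not a technicality. You propose to identify the semifinite ultraproduct $\prod_{m\to\mathcal U}\bigl(N_m\otimes B(\ell^2)\bigr)$ with $\mathcal N\otimes B(\ell^2)$, and then to "run the exact case verbatim." That identification is false, and it is false for exactly the reason you yourself flag as the first "main obstacle": an element of $\mathcal N\otimes B(\ell^2)$ supported under $(1\otimes e_{11})$ on the right necessarily has the form $\sum_j A_j\otimes e_{j1}$ with $A_j\in\mathcal N$ and $\sum_j A_j^*A_j$ convergent, i.e.\ it is captured coordinatewise in $j$; since you correctly observe that the coordinatewise ultralimits of the $a_j^{(m)}$ can lose mass, the very element $V=(V_m)_{\mathcal U}$ you need generally does \emph{not} lie in $\mathcal N\otimes B(\ell^2)$. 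So the exact-case argument cannot be applied as stated: its crux is the tensor splitting of the relative commutant as $D\otimes B(\ell^2)$ and the resulting center-valued comparison of $p$ and $q$, and in the honest ambient algebra $\mathcal M=\prod_{m\to\mathcal U}(N_m\otimes B(\ell^2))$ you would instead have to prove $p\sim q$ inside $\{y_1,\dots,y_n\}'\cap\mathcal M$, whose center and comparison theory are not pinned down by the slice condition $(\mathrm{id}\otimes\mathrm{Tr})(q)=1$. (There is a second, subordinate issue: for infinite traces the ideal $\{(x_m):\|x_m\|_2\to 0\}$ no longer defines the ultraproduct, so $\mathcal M$ itself needs a nontrivial construction before one can speak of $V=(V_m)_{\mathcal U}$ and of $Vx_k=y_kV$ holding "exactly.") This is the essential difficulty of the theorem, and it is why Haagerup's proof is a direct quantitative perturbation argument rather than a compactness argument: one runs your partial-isometry scheme in $N\otimes B(\ell^2)$ itself with the approximate relation $\|Vx_k-y_kV\|_2^2<\delta$, perturbs $q$ to an honest projection in the relative commutant, and controls the error in the center-valued trace --- none of which is supplied by the proposal as written.
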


\noin We will also need the following fact from \cite{saa}.  Again, we expand the scope to all separable finite factors; the proof in \cite{saa} still applies to this more general setting.

\begin{lem}[\cite{saa}]\label{tensordelta}
Let $N_1$ and $N_2$ be separable finite factors, and let $(u_1,\dots,u_n)$ and $(v_1,\dots, v_n)$ be two $n$-tuples of unitaries in $N_1$.  Fix $\delta >0$, and let $z \in N_1 \otimes N_2$ be a unitary of the form \[z = \sum_{j=1}^\infty a_j \otimes b_j\] where $\left\{b_j\right\} \subset N_2$ is an orthonormal basis in $L^2(N_2)$ and the convergence is with respect to the $||\cdot||_2$-norm.  If $z$ is such that for every $1 \leq k \leq n$, \[||z(u_k\otimes 1_{N_2}) - (v_k\otimes 1_{N_2})z||_2^2 < \delta,\] then $(u_1,\dots,u_n)$ and $(v_1,\dots,v_n)$ are $\delta$-related.  Furthermore, $\left\{a_j\right\}$ is a sequence that witnesses that $(u_1,\dots,u_n)$ and $(v_1,\dots, v_n)$ are $\delta$-related.
\end{lem}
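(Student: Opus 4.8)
The plan is to prove Lemma \ref{tensordelta} by showing that the coefficients $\{a_j\}$ appearing in the orthonormal-basis expansion of $z$ directly serve as a witnessing sequence for the $\delta$-relation. The key observation is that the partial isometry/unitarity of $z$ in $N_1 \otimes N_2$ translates, via the orthonormality of $\{b_j\}$ in $L^2(N_2)$, into exactly the two summing conditions \eqref{summing} required of a witnessing sequence, while the commutation defect hypothesis on $z$ translates into the estimate \eqref{ineq}.

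First I would establish the summing conditions. Writing $z = \sum_j a_j \otimes b_j$ and using that $z$ is a unitary in $N_1 \otimes N_2$, I would expand $z^*z = 1$ and $zz^* = 1$. The point is that $N_1 \otimes N_2$ carries the tensor-product trace, and $\{b_j\}$ is chosen orthonormal in $L^2(N_2)$, so that slicing against the $N_2$-factor (equivalently, applying $\mathrm{id}_{N_1} \otimes \tau_{N_2}$ after multiplying by suitable $1 \otimes b_i^* b_j$ terms, or more cleanly by computing $(\mathrm{id}\otimes \tau)(z^*z)$) isolates the sum $\sum_j a_j^* a_j$ and forces it to equal $1_{N_1}$; symmetrically $\sum_j a_j a_j^* = 1_{N_1}$ from $zz^*=1$. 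The SOT convergence is inherited from the $\|\cdot\|_2$-convergence of the basis expansion together with boundedness of $z$.

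Next I would translate the commutation hypothesis. Fixing $1 \leq k \leq n$, I would compute $z(u_k \otimes 1) - (v_k \otimes 1)z = \sum_j (a_j u_k - v_k a_j) \otimes b_j$. Because the $\{b_j\}$ are orthonormal in $L^2(N_2)$ and the tensor trace factors as a product, the squared $2$-norm of this element is exactly $\sum_j \|a_j u_k - v_k a_j\|_2^2$ (this is the Pythagorean/Parseval step in the $N_2$-slot). The hypothesis that this is $< \delta$ is then precisely inequality \eqref{ineq}. Combining both computations shows that $\{a_j\}$ witnesses that $(u_1,\dots,u_n)$ and $(v_1,\dots,v_n)$ are $\delta$-related, giving both the main assertion and the ``furthermore'' claim simultaneously.

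I expect the only genuine subtlety — rather than the main obstacle, since the result is essentially bookkeeping — to be handling the convergence and orthogonality carefully so that the cross terms vanish: one must verify that the formal Parseval identities hold rigorously for the SOT/$\|\cdot\|_2$-convergent series rather than merely for finite sums, which requires knowing that the expansion $z=\sum_j a_j\otimes b_j$ converges in $\|\cdot\|_2$ and that each slice is well-defined. Since $\{b_j\}$ is an orthonormal basis of $L^2(N_2)$ and $z\in L^2(N_1\otimes N_2)$, the coefficients $a_j$ are exactly the Fourier coefficients of $z$, so Parseval applies and the interchange of sum with the trace is justified by $\|\cdot\|_2$-continuity. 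With that in hand the two displays fall out directly.
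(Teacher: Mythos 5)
Your argument is correct and is essentially the intended one: the paper itself gives no proof of Lemma \ref{tensordelta} (it is quoted from \cite{saa} with the remark that the proof there extends to separable finite factors), and the proof in that reference is exactly this Fourier/Parseval computation --- slicing $z^*z=1$ and $zz^*=1$ against the orthonormal basis $\{b_j\}$ to get \eqref{summing}, and applying Parseval in the $N_2$-slot to the expansion $z(u_k\otimes 1)-(v_k\otimes 1)z=\sum_j(a_ju_k-v_ka_j)\otimes b_j$ to get \eqref{ineq}. The one point you flag but should execute carefully is the SOT convergence in \eqref{summing}: the partial sums $\sum_{j\le m}a_j^*a_j$ (resp.\ $\sum_{j\le m}a_ja_j^*$) are increasing and satisfy $\langle \sum_{j\le m}a_j^*a_j\,\hat c,\hat c\rangle=\sum_{j\le m}||a_jc||_2^2\le ||z(c\otimes 1)||_2^2=||c||_2^2$ for all $c\in N_1$ (and the symmetric estimate using traciality for the other sum), which bounds them by $1$ and forces SOT convergence to $1$.
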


We are now ready to present the answer to Popa's question.  We should note that after circulating an early preprint of this article containing this result for ultraproducts of McDuff II$_1$-factors, A. Ioana pointed out a way to extend our argument to the general case.  We wish to express our deep gratitude for his contribution.  We will exploit the following fact.

\begin{prop}
Let $X$ and $Y$ be metric spaces, and let $\varphi: X \rightarrow Y$ be a continuous function.  Suppose that for every $\varepsilon > 0$ there is a $\delta > 0$ such that if $x,x' \in X$ are such that $d_Y(\varphi(x),\varphi(x')) < \delta$ then $d_X(x,x') < \varepsilon$.  Then $\varphi$ is a homeomorphism onto its image.
\end{prop}

\begin{thm}\label{mcd}
Let $N$ be a non-amenable finite von Neumann algebra satisfying CEP, and for each $k \in \mb{N}$, let $M_k$ be a II$_1$-factor.  Then $\HOM(N,R^\mc{U})$ embeds into $\ds\HOM(N,\prod_{k\rightarrow \mc{U}}M_k)$.  In particular, $\ds\HOM(N,\prod_{k\rightarrow \mc{U}}M_k)$ is non-separable.
\end{thm}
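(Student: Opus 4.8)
The plan is to exhibit an explicit topological embedding and then invoke the proposition preceding the theorem together with Ozawa's non-separability result. Fix trace-preserving embeddings $\iota_k\colon R\hookrightarrow M_k$ (available since every II$_1$-factor contains a copy of $R$), and let $\iota=(\iota_k)_\mc{U}\colon R^\mc{U}\to\prod_{k\to\mc{U}}M_k$ be the induced embedding. Define $\Phi\colon\HOM(N,R^\mc{U})\to\HOM(N,\prod_{k\to\mc{U}}M_k)$ by $\Phi[\sigma]=[\iota\circ\sigma]$. This is well defined on unitary-equivalence classes because $\iota$ carries a conjugating unitary in $R^\mc{U}$ to one in $\prod_{k\to\mc{U}}M_k$, and for the same reason $\Phi$ is $1$-Lipschitz for the metrics $d_{R^\mc{U}}$ and $d_{\prod M_k}$, hence continuous. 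By the preceding proposition, $\Phi$ is a homeomorphism onto its image once I establish the reflecting estimate: for every $\varepsilon>0$ there is a $\delta>0$ such that $d_{\prod M_k}(\Phi[\sigma_1],\Phi[\sigma_2])<\delta$ forces $d_{R^\mc{U}}([\sigma_1],[\sigma_2])<\varepsilon$. Granting this, $\HOM(N,R^\mc{U})$ embeds homeomorphically into $\HOM(N,\prod_{k\to\mc{U}}M_k)$; since $N$ is non-amenable and satisfies CEP, Ozawa's Theorem A.1 in \cite{topdyn} shows $\HOM(N,R^\mc{U})$ is non-separable, whence so is its homeomorphic image, giving the ``in particular'' claim.

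To prove the reflecting estimate I would first arrange the fixed generating set $\{x_j\}$ to consist of unitaries (any countable $\|\cdot\|_2$-dense set of unitaries generating $N$ induces a metric with the same topology, which suffices for a topological embedding). Given $\varepsilon$, choose $n$ so that the tail $\sum_{j>n}2^{-2j}\cdot 4$ is below $\varepsilon^2/2$, and let $\delta_0=\delta(n,\varepsilon_0)$ be the Haagerup constant from Theorem \ref{deltarelated} for the finite factor $R^\mc{U}$, where $\varepsilon_0$ is chosen small relative to $\varepsilon$. A hypothesis $d_{\prod M_k}(\Phi[\sigma_1],\Phi[\sigma_2])<\delta$ supplies a unitary $U\in\prod_{k\to\mc{U}}M_k$ with $\|U\,\iota\sigma_2(x_j)-\iota\sigma_1(x_j)\,U\|_2$ controlled by $\delta$ for each $j\leq n$. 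The target is a unitary $W\in R^\mc{U}$ with $\|W\sigma_2(x_j)-\sigma_1(x_j)W\|_2<\varepsilon_0$ for $j\leq n$: summing against $2^{-2j}$ and adding the tail then yields $d_{R^\mc{U}}([\sigma_1],[\sigma_2])<\varepsilon$. By Theorem \ref{deltarelated} applied inside $R^\mc{U}$, it is enough to show that the $n$-tuples $(\sigma_1(x_j))_{j\le n}$ and $(\sigma_2(x_j))_{j\le n}$ are $\delta_0$-related in $R^\mc{U}$.

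The heart of the argument is to extract this $\delta_0$-relation from the conjugation $U$ living in the a priori much larger algebra $\prod_{k\to\mc{U}}M_k$. Following Ioana's observation, I would first reduce to the case that each $M_k$ is McDuff: replacing $M_k$ by $M_k\otimes R$ only enlarges the codomain (a conjugation in $\prod_{k\to\mc{U}}M_k$ persists in $\prod_{k\to\mc{U}}(M_k\otimes R)$), and a trace-preserving conditional expectation lets one descend, so it suffices to prove the reflecting estimate for McDuff codomains. When the $M_k$ are McDuff, $\prod_{k\to\mc{U}}M_k$ absorbs a fresh copy of $R$ commuting with $\iota(R^\mc{U})$, producing inside $\prod_{k\to\mc{U}}M_k$ a copy of $R^\mc{U}\otimes R$ in which $\sigma_i(x_j)$ is identified with $\sigma_i(x_j)\otimes 1$. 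The goal then becomes to produce a \emph{unitary} $z$ in this copy of $R^\mc{U}\otimes R$ satisfying $\|z(\sigma_1(x_j)\otimes 1)-(\sigma_2(x_j)\otimes 1)z\|_2^2<\delta_0$ for $j\le n$; Lemma \ref{tensordelta} (with $N_1=R^\mc{U}$ and $N_2=R$) then reads off the Cuntz-type witness $\{a_j\}$ from the Fourier expansion of $z$ and certifies that $(\sigma_1(x_j))$ and $(\sigma_2(x_j))$ are $\delta_0$-related in $R^\mc{U}$, completing the chain.

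The main obstacle is precisely this last construction: converting the conjugating unitary $U$, which exploits all of the room in $\prod_{k\to\mc{U}}M_k$, into a unitary $z$ supported on the self-absorbed subalgebra $R^\mc{U}\otimes R$ while preserving the approximate intertwining. The naive move of applying the conditional expectation onto $R^\mc{U}\otimes R$ retains the intertwining (since the $\sigma_i(x_j)$ lie in the range) but destroys unitarity, so the delicate point is to correct $\mb{E}(U)$ back to a genuine unitary---equivalently, to exploit the freedom of right-multiplying $U$ by unitaries in $\iota\sigma_1(N)'\cap\prod_{k\to\mc{U}}M_k$ and left-multiplying by unitaries in $\iota\sigma_2(N)'\cap\prod_{k\to\mc{U}}M_k$---so that the resulting $z$ is both unitary and still nearly intertwining. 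This is where the McDuff reduction and the self-absorbing structure of $R$ must be used in earnest; everything else is bookkeeping with the metric and the constants supplied by Theorem \ref{deltarelated}.
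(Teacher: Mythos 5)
Your framework---the map $\Phi[\sigma]=[\iota\circ\sigma]$, the reduction to the reflecting estimate via the proposition preceding the theorem, and the appeal to Ozawa's Theorem A.1 for the non-separability claim---matches the paper exactly. But the heart of the argument, which you yourself identify as ``converting the conjugating unitary $U$ into a unitary $z$ supported on $R^\mc{U}\otimes R$ while preserving the approximate intertwining,'' is left unproved, and your proposed route does not obviously close it. The conditional expectation $\mb{E}(U)$ onto the copy of $R^\mc{U}\otimes R$ need not be close to any unitary (in general $\|\mb{E}(U)\|_2$ can even be small), and there is no visible mechanism by which multiplying $U$ on either side by relative-commutant unitaries repairs both unitarity and the intertwining estimate simultaneously. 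In addition, Lemma \ref{tensordelta} is stated for separable finite factors, whereas you propose to apply it with $N_1=R^\mc{U}$. So as written the proof has a genuine gap at its central step.

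The paper circumvents this entirely by performing the tensor decomposition at the level of each individual $M_k$ rather than in the ultraproduct. Since $\pi(u_j)_k$ and $\rho(u_j)_k$ may be taken (after a perturbation) to be unitaries in a matrix subalgebra $\mb{M}_{n_k}\subset R\subset M_k$, one has the exact factorization $M_k\cong \mb{M}_{n_k}\otimes B_k$ with $B_k=\mb{M}_{n_k}'\cap M_k$ a II$_1$-factor, and the conjugating unitary $z_k\in M_k$ is \emph{automatically} a unitary of the required form $\sum_m a_{mk}\otimes b_{mk}$---no correction, no McDuff hypothesis, and no conditional expectation is needed. Lemma \ref{tensordelta} then applies with $N_1=\mb{M}_{n_k}$ and $N_2=B_k$ to show that the tuples $(\pi(u_j)_k)_{j\le J}$ and $(\rho(u_j)_k)_{j\le J}$ are $\delta$-related in $\mb{M}_{n_k}$, and Theorem \ref{deltarelated} is applied in the finite factor $\mb{M}_{n_k}$ (not in $R^\mc{U}$) to produce unitaries $w_k\in\mc{U}(\mb{M}_{n_k})\subset R$ which assemble to $w=(w_k)_\mc{U}\in R^\mc{U}$. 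This is precisely Ioana's observation: choose the tensor factorization so that the given unitary already lives in it, rather than trying to push the unitary into a smaller subalgebra after the fact. If you want to salvage your outline, this $k$-by-$k$ decomposition is the missing idea.
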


\begin{proof}
For each $k \in \mb{N}$, fix a copy of the separably acting hyperfinite II$_1$-factor $R$ as a subfactor of $M_k$, and use these inclusions to induce an inclusion $\ds\iota: R^\mc{U} \subset \prod_{k\rightarrow \mc{U}}M_k$.  Consider the map $\ds\varphi: \HOM(N,R^\mc{U}) \rightarrow \HOM(N,\prod_{k\rightarrow \mc{U}}M_k)$ given by $\varphi([\pi]) = [\iota \circ \pi]$.

We will apply the above proposition to $\varphi$.  Note that $\varphi$ is continuous. Let $\left\{u_j\right\}_{j=1}^\infty$ be a generating set of unitaries in $N$, and use this sequence to define the metrics $d_{R^\mc{U}}$ and $\ds d_{\prod_{k\rightarrow \mc{U}}M_k}$. Fix $\varepsilon_0 > 0$. Let $J \in  \mb{N}$ be such that $\sum_{j= J+1}^\infty 2^{2-2j} < \frac{\varepsilon_0^2}{2}$.  By Theorem \ref{deltarelated}, there is a $\delta\left(J, \frac{\varepsilon_0}{\sqrt{2J}}\right)$ such that for any finite factor $\tilde{N}$, and any pair of $\delta\left(J, \frac{\varepsilon_0}{\sqrt{2J}}\right)$-related $J$-tuples of unitaries $(\tilde{u}_1,\dots, \tilde{u}_J), (\tilde{v}_1,\dots, \tilde{v}_J) \in \mc{U}(\tilde{N})$, there is a unitary $\tilde{w} \in \mc{U}(\tilde{N})$ such that for every $1 \leq j \leq J$, $\ds||\tilde{w}\tilde{u}_j-\tilde{v}_j\tilde{w}||_2 < \frac{\varepsilon_0}{\sqrt{2J}}.$
Let $\ds \delta_0^2 = 2^{-2J}\cdot\delta\left(J, \frac{\varepsilon_0}{\sqrt{2J}}\right)$, and let $[\pi], [\rho] \in \HOM(N,R^\mc{U})$ be such that $\ds d_{\prod_{k\rightarrow \mc{U}}M_k}([\iota \circ\pi], [\iota\circ\rho])^2 < \delta_0^2.$ For each $k \in \mb{N}$  there is a matrix subalgebra $\mb{M}_{n_k} \subset R$ such that $\pi(u_j)_k, \rho(u_j)_k \in \mb{M}_{n_k}\subset R$ for every $1 \leq j \leq J$, and we can and do assume that $\pi(u_{j})_k$ and $\rho(u_{j})_k$ are unitaries.  Recall that we fixed embeddings $R \subset M_k$ at the beginning of the proof. So we have $\mb{M}_{n_k} \subset R \subset M_k$. For $k\in \mb{N}$, let $B_k = \mb{M}_{n_k}' \cap M_k$.  It is well-known that $B_k$ is a II$_1$-factor and that $M_k \cong \mb{M}_{n_k} \otimes B_k$ since $B_k$ is the relative commutant of a matrix algebra in a II$_1$-factor.   So for every $1 \leq j \leq J$ and every $k \in \mb{N}$ we have that $\iota(\pi(u_j))_k = \pi(u_j)_k \otimes 1_{B_k}$ and $\iota(\rho(u_j))_k = \rho(u_j)_k \otimes 1_{B_k}$ are unitaries in $\mb{M}_{n_k} \otimes 1_{B_k} \subset \mb{M}_{n_k} \otimes B_k = M_k$.


 Since $\ds d_{\prod_{k\rightarrow \mc{U}}M_k}([\iota\circ \pi], [\iota\circ\rho])^2 < \delta_0^2$, there is a unitary $\ds z \in \mc{U}\left(\prod_{k\rightarrow \mc{U}}M_k\right) = \mc{U}\left(\prod_{k\rightarrow \mc{U}}\mb{M}_{n_k} \otimes B_k\right)$ such that $\ds \sum_{j=1}^\infty ||z\cdot \iota(\pi(u_j)) - \iota(\rho(u_j))\cdot z||_2^2 <\delta_0^2$.
 

By standard approximation arguments, we may assume that $z$ has the form $z = (z_k)_\mc{U}$ where $z_k \in \mc{U}(\mb{M}_{n_k} \otimes B_k)$ and $\ds z_k = \sum_{m=1}^\infty a_{mk}\otimes b_{mk}$ where $\left\{b_{mk}\right\}$ is an orthonormal basis for $L^2(B_k)$ and convergence is with respect to the $||\cdot||_2$-norm.  There is a $K \in \mc{U}$ such that for $k \in K$ and $1\leq j' \leq J$, we have
\begin{align*}
&2^{-2j'}||z_k(\pi(u_{j'})_k \otimes 1_{B_k}) - (\rho(u_{j'})_k\otimes 1_{B_k})z_k||_2^2\\
&=2^{-2j'}||z_k\iota(\pi(u_{j'}))_k - \iota(\rho(u_{j'}))z_k||_2^2 \\
 & \leq \sum_{j=1}^\infty 2^{-2j}||z_k\iota(\pi(u_j))_k - \iota(\rho(u_j))_kz_k||_2^2\\
& < {2^{-2J}}\cdot\delta\left(J, \frac{\varepsilon_0}{\sqrt{2J}}\right).
\end{align*}

Thus for $k \in K$ and $1 \leq j' \leq J$, \[||z_k(\pi(u_{j'})_k \otimes 1_{B_k}) - (\rho(u_{j'})_k\otimes 1_{B_k})z_k||_2^2 < \delta\left(J, \frac{\varepsilon_0}{\sqrt{2J}}\right).\]

By Lemma \ref{tensordelta}, we have that for each $k \in K$, $(\pi(u_1)_k,\dots, \pi(u_J)_k)$ and \linebreak $(\rho(u_1)_k,\dots, \rho(u_J)_k)$ are $\delta\left(J, \frac{\varepsilon_0}{\sqrt{2J}}\right)$-related.  So by Theorem \ref{deltarelated}, for each $k \in K$, there is a unitary $w_k \in \mc{U}(\mb{M}_{n_k})$ such that \[||w_k\pi(u_{j'})_k - \rho(u_{j'})_kw_k||_2^2 < \frac{\varepsilon_0^2}{2J}\] for every $1 \leq j' \leq J$.  Let $w = (w_k)_\mc{U}$. Then we have
\begin{align*}
d_{R^\mc{U}}([\pi],[\rho])^2 & \leq \sum_{j=1}^\infty 2^{-2j}||w\pi(u_j) - \rho(u_j)w||_2^2\\
& = \sum_{j'=1}^J 2^{-2j'}||w\pi(u_{j'}) - \rho(u_{j'}) w||_2^2 + \sum_{j=J+1}^\infty 2^{-2j}||w\pi(u_j) - \rho(u_j)w||_2^2\\
& < J \cdot \frac{\varepsilon_0^2}{2J} + \frac{\varepsilon_0^2}{2}\\
& = \varepsilon_0^2.\qedhere
\end{align*}
\end{proof}

\section{Commuting embeddings and amenability}

\subsection{Commuting embeddings of tracial von Neumann algebras}\label{comem}

In this subsection, we show that the above results hold when we further insist that the pair of embeddings have commuting ranges.  While the proof of this fact is rather elementary, it yields some nontrivial consequences.  To set the stage, we first recall the following result from \cite{FGL}:

\begin{thm}[\cite{FGL}]\label{FGL}
Let $N$ be a II$_1$-factor with $N^\mc{U} \cong R^\mc{U}$.  Let $\alpha \in \text{Aut}(N\otimes N)$ be the flip automorphism given by $\alpha(x\otimes y) = y \otimes x$. The following are equivalent.
\begin{enumerate}
\item $N \cong R$;

\item Any two embeddings $\pi,\rho: N \rightarrow R^\mc{U}$, are unitarily equivalent.

\item Given any embedding $\pi: N \otimes N \rightarrow R^\mc{U}, \pi$ and $\pi \circ \alpha$ are unitarily equivalent.

\item Given any embedding $\pi: N \otimes N \rightarrow R^\mc{U}, \pi|_{N\otimes \mb{C}}$ and $\pi \circ \alpha|_{N\otimes \mb{C}}$ are unitarily equivalent.  That is, any pair of commuting embeddings of $N$ into $R^\mc{U}$ are unitarily conjugate.
\end{enumerate}
\end{thm}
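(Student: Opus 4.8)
The plan is to run the implications through $(1)\Leftrightarrow(2)$ as the analytic core and to treat $(3)$ and $(4)$ as reformulations funneling into $(4)$, so that the whole theorem reduces to the two genuinely analytic steps $(2)\Rightarrow(1)$ and $(4)\Rightarrow(1)$. The standing hypothesis $N^\mc U\cong R^\mc U$ guarantees that $N$ (and $N\otimes N$) satisfies CEP, so all of the embeddings in sight exist and Theorem \ref{jungv2} is available.

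First I would clear the soft directions. For $(1)\Rightarrow(2)$ I would copy the argument of Proposition \ref{Rss}: if $N\cong R$ then by Proposition \ref{waue} any two embeddings of $R$ into $R^\mc U$ are weakly approximately unitarily equivalent, hence unitarily conjugate by Theorem 3.1 of \cite{autoultra}. For $(1)\Rightarrow(3)$, note $N\cong R$ gives $N\otimes N\cong R$, so again any two embeddings of $N\otimes N$ into $R^\mc U$ --- in particular $\pi$ and $\pi\circ\alpha$ --- are unitarily conjugate. Both $(3)\Rightarrow(4)$ and $(2)\Rightarrow(4)$ are immediate: in the first, restrict a unitary conjugating $\pi$ to $\pi\circ\alpha$ down to $N\otimes\mathbb C$; in the second, observe that $\pi|_{N\otimes\mathbb C}$ and $\pi\circ\alpha|_{N\otimes\mathbb C}=\pi(1\otimes\,\cdot\,)$ are simply two embeddings of $N$ with commuting ranges.

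For the converse $(2)\Rightarrow(1)$ I would avoid any finite-generation hypothesis by routing through this paper's own machinery: since unitaries of $R^\mc U$ lift to sequences of unitaries in $R$, statement $(2)$ implies that any two embeddings of $N$ into $R^\mc U$ are ucp-conjugate in the sense of Definition \ref{wms}, whence $N$ is injective by Theorem \ref{jungv2}, and a II$_1$-factor that is injective is $\cong R$ by \cite{connes}. To close the cycle it then remains to prove $(4)\Rightarrow(1)$. Here I would exploit the self-absorbing structure $R\cong R_1\otimes R_2$ with $R_i\cong R$: given arbitrary embeddings $\pi_1,\pi_2\colon N\to R^\mc U$, amplify them into the two commuting copies $R_1^\mc U, R_2^\mc U$ to obtain commuting embeddings $\tau_1,\tau_2$, apply $(4)$ to get a unitary $u$ with $\mathrm{Ad}\,u\circ\tau_1=\tau_2$, and then compose with the trace-preserving conditional expectation off the second leg. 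A short computation shows this produces a single ucp map $\Psi\colon R^\mc U\to R^\mc U$ with $\Psi\circ\pi_1=\pi_2$, so $\pi_1$ and $\pi_2$ are conjugate by a ucp \emph{endomorphism of the ultraproduct}.

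The hard part --- and the exact place the argument resists being soft --- is that this $\Psi$ need not be of the decomposable form $(\varphi_k)_\mc U$ required by Definition \ref{wms} and hence by Theorem \ref{jungv2}: it is built from a conditional expectation of $R^\mc U$ onto a nontrivial subfactor, and the discussion following Question \ref{waucpq} records precisely that such maps fail to decompose as ultraproducts of ucp maps $R\to R$. Thus one cannot simply feed $\Psi$ into Theorem \ref{jungv2}, and the naive transfer of conjugacy across a tensor leg can manufacture spurious conjugacies that do not exist in $R^\mc U$ itself. I expect the honest route for $(4)\Rightarrow(1)$ to be the contrapositive: assuming $N$ non-amenable, use the failure of tubularity (as in Lemma \ref{tube2}) to produce, for a fixed $\varepsilon>0$, microstate pairs $\xi^{(k)},\eta^{(k)}$ with $\varepsilon$-separated unitary orbits, place them in the two tensor legs, and --- using that the flip on $R_1\otimes R_2$ is approximately inner --- argue that a conjugacy of the resulting commuting embeddings would descend to an $\varepsilon$-conjugacy of $\xi^{(k)}$ and $\eta^{(k)}$ at $\mc U$-many levels. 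Making that descent rigorous with constants independent of the matrix size, i.e. proving a genuinely \emph{commuting} version of tubularity rather than importing the non-commuting one, is the crux on which the whole theorem turns.
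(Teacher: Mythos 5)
Your soft directions and your $(2)\Rightarrow(1)$ are fine: the route unitary conjugacy $\Rightarrow$ ucp-conjugacy $\Rightarrow$ injectivity via Theorem \ref{jungv2} $\Rightarrow$ $N\cong R$ by \cite{connes} is exactly the upgrade of Jung's theorem this paper provides, and $(1)\Rightarrow(2)$ via Proposition \ref{waue} and Theorem 3.1 of \cite{autoultra} is the argument of Proposition \ref{Rss}. (For the record, this theorem is imported from \cite{FGL}, whose own proof runs through McDuffness of $N$ and Connes's Proposition \ref{connests} rather than through tubularity.)

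The genuine gap is in $(4)\Rightarrow(1)$, and you have misidentified where the difficulty lies. No ``commuting version of tubularity'' and no descent of conjugacies across a tensor leg is needed, because the amplified embeddings $\tilde\pi_1(x)=\sigma^{\mc U}(\pi_1(x)\otimes 1)$ and $\tilde\pi_2(x)=\sigma^{\mc U}(1\otimes\pi_2(x))$ are each \emph{unitarily equivalent} to the originals $\pi_1,\pi_2$ --- this is Lemma \ref{alwayscommute}, resting on Remark 3.2.4 of \cite{topdyn} and the self-absorption $R\cong R\otimes R$. Granting that, $(4)\Rightarrow(2)$ is immediate: given arbitrary $\pi_1,\pi_2$, pass to the unitarily equivalent commuting pair $\tilde\pi_1,\tilde\pi_2$, apply $(4)$, and compose unitary equivalences; then your own $(2)\Rightarrow(1)$ closes the cycle. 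This is precisely how the paper proves the hard direction of its generalization, Theorem \ref{mr} (in contrapositive form: non-amenability yields a non-ucp-conjugate pair, which Lemma \ref{alwayscommute} converts into a non-ucp-conjugate \emph{commuting} pair). Your worry about the map $\Psi$ failing to decompose as $(\varphi_k)_{\mc U}$ is also misplaced in this instance: the expectation off the second tensor leg is the ultraproduct of the levelwise slice maps $\mathrm{id}_{R_1}\otimes\tau_{R_2}$, not an expectation onto a separable subfactor of the kind flagged after Question \ref{waucpq}; but the cleaner point is that no expectation is needed at all. As written, your proof of $(4)\Rightarrow(1)$ is incomplete, and the missing ingredient is the soft Lemma \ref{alwayscommute}, not a new finitary estimate.
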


\noin Note that Jung's theorem strengthens the equivalence of (1) and (2) in Theorem \ref{FGL} by weakening the hypotheses on $N$ so that $N$ need only satisfy CEP. The proof presented in \cite{FGL} uses the fact that $N^\mc{U} \cong R^\mc{U}$ implies that $N$ is McDuff together with the following result of Connes from \cite{connes}:

\begin{prop}[\cite{connes}]\label{connests}
Let $N$ be a separably acting II$_1$-factor.  The following are equivalent.
\begin{enumerate}
\item $N \cong R$;
\item $N \cong N\otimes R$ and given $x_1,\dots, x_n \in N$, $\varepsilon > 0$ there are $z_1,\dots, z_n \in R$ and a unitary $u \in N\otimes R$ with \[||(x_j \otimes I_R) - u(I_N \otimes z_j)u^*||_2 < \varepsilon\] for every $1 \leq j \leq n$.
\end{enumerate}
\end{prop}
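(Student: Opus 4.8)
The plan is to prove the two implications separately, with essentially all of the content lying in $(2) \Rightarrow (1)$. The guiding idea is that condition (2) is engineered to feed directly into Jung's finitary characterization of semidiscreteness (Proposition \ref{jungslem}): the approximation clause says that after passing to $N \otimes R$, the elements of $N$ can be absorbed up to $||\cdot||_2$ into a finite-dimensional subalgebra, which is exactly the input needed to certify semidiscreteness. Once $N$ is semidiscrete it is amenable, and a II$_1$-factor that is amenable is isomorphic to $R$ by the equivalence of the four conditions in Definition \ref{amendef} together with the Murray--von Neumann uniqueness of the separable hyperfinite II$_1$-factor.

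For $(1) \Rightarrow (2)$, first note that $N \cong R$ gives $N \otimes R \cong R \otimes R \cong R \cong N$, so the McDuff half of (2) is automatic. For the approximation clause I would consider the two trace-preserving embeddings $\iota_1, \iota_2 : R \to R \otimes R \cong N \otimes R$ given by $\iota_1(x) = x \otimes I_R$ and $\iota_2(z) = I_N \otimes z$. Since $R \otimes R$ is a II$_1$-factor, Proposition \ref{waue} shows $\iota_1 \sim_{\text{waue}} \iota_2$, so for the given $x_1, \dots, x_n$ and $\varepsilon > 0$ there is, by Definition \ref{wauedef}, a unitary $u \in N \otimes R$ with $||\iota_1(x_j) - u\,\iota_2(x_j)\,u^*||_2 < \varepsilon$ for all $j$. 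Taking $z_j = x_j$ yields exactly $||(x_j \otimes I_R) - u(I_N \otimes z_j)u^*||_2 < \varepsilon$.

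For $(2) \Rightarrow (1)$, fix a finite set $F = \{x_1, \dots, x_n\} \subset N$ and $\varepsilon > 0$. By the approximation clause there are $z_1, \dots, z_n \in R$ and a unitary $u \in N \otimes R$ with $||(x_j \otimes I_R) - u(I_N \otimes z_j)u^*||_2 < \varepsilon/2$. Because $R$ is hyperfinite I can choose a finite-dimensional subalgebra $A \subset R$ and elements $a_j \in A$ with $||z_j - a_j||_2 < \varepsilon/2$; since conjugation by a unitary preserves $||\cdot||_2$ and $||I_N \otimes (z_j - a_j)||_2 = ||z_j - a_j||_2$, this gives $||(x_j \otimes I_R) - u(I_N \otimes a_j)u^*||_2 < \varepsilon$, with all approximants lying in the finite-dimensional subalgebra $A_\varepsilon := u(I_N \otimes A)u^* \subset N \otimes R$. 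I would then invoke Proposition \ref{jungslem} with $N_\varepsilon = N \otimes R$, the trace-preserving embedding $\pi_\varepsilon : N \hookrightarrow N \otimes R$, $x \mapsto x \otimes I_R$, and $A_\varepsilon$ as above, to obtain $\pi_\varepsilon(F) \subset_{||\cdot||_2, \varepsilon} A_\varepsilon$. As $F$ and $\varepsilon$ were arbitrary, Proposition \ref{jungslem} shows $N$ is semidiscrete, hence amenable, hence $N \cong R$.

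The main obstacle is conceptual rather than computational: one must recognize that the finitary semidiscreteness criterion should be applied to $N$ itself \emph{through the auxiliary embedding} $x \mapsto x \otimes I_R$ into $N \otimes R$, rather than by trying to verify semidiscreteness of $N \otimes R$ directly, which would force one to control the $I_N \otimes R$ direction simultaneously with a single finite-dimensional subalgebra. With that observation in place, the reduction from $z_j \in R$ to a finite-dimensional $a_j$ via hyperfiniteness of $R$, and the invariance of $||\cdot||_2$ under conjugation, are routine. I note that the McDuff clause $N \cong N \otimes R$ is not strictly used in $(2) \Rightarrow (1)$ once semidiscreteness is established; it is included so that (2) is a faithful reformulation of (1), being precisely what the forward direction supplies.
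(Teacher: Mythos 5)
Your argument is correct, but note that the paper does not prove Proposition \ref{connests} at all: it is quoted verbatim from \cite{connes}, where it appears as part of Connes's chain of equivalences for injectivity, proved by entirely different and much deeper means. So there is no internal proof to compare against; what you have produced is a self-contained derivation from the other black boxes the paper sets up, and it works. For $(2)\Rightarrow(1)$, feeding the approximation clause into Proposition \ref{jungslem} via the embedding $x\mapsto x\otimes I_R$ into $N\otimes R$, after replacing the $z_j$ by elements $a_j$ of a finite-dimensional subalgebra of $R$ and conjugating that subalgebra by $u$, is exactly the right move, and your observation that the McDuff clause $N\cong N\otimes R$ is never used in this direction is accurate. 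For $(1)\Rightarrow(2)$, the appeal to Proposition \ref{waue} applied to the two embeddings $x\mapsto x\otimes I_R$ and $x\mapsto I_N\otimes\theta(x)$ (for a fixed isomorphism $\theta:N\to R$) of $R$ into the separable II$_1$-factor $N\otimes R\cong R$ is correct; just be explicit that $z_j=\theta(x_j)$ rather than literally $x_j$, since the statement requires $z_j\in R$. The only caveat worth recording is logical bookkeeping rather than a gap: Proposition \ref{jungslem} and the step ``semidiscrete $\Rightarrow$ hyperfinite $\Rightarrow$ $\cong R$'' themselves rest on Connes's theorem, so your proof does not replace Connes's original argument but rather shows the proposition is a formal consequence of the standard package (Jung's finitary criterion, weak approximate unitary equivalence of embeddings of $R$, and the amenability equivalences), which is entirely appropriate in the context of this paper.
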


We take this opportunity to show how Kishimoto's Proposition \ref{kishimoto} provides the following ucp analog of the above proposition:

\begin{prop}
Let $N$ be a separably acting II$_1$-factor.  The following are equivalent.
\begin{enumerate}
\item $N \cong R$;
\item $N \cong N\otimes R$ and given $x_1,\dots, x_n \in (N)_{\leq 1}$,$\varepsilon > 0$ there are $z_1,\dots, z_n \in (R)_{\leq 1}$ and a subtracial ucp map $\varphi: N\otimes R \rightarrow N\otimes R$ with \[||(x_j \otimes I_R) - \varphi(I_N \otimes z_j)||_2 < \varepsilon\] for every $1 \leq j \leq n$.
\end{enumerate}
\end{prop}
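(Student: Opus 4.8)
The plan is to deduce this ucp analog from Connes's Proposition \ref{connests} together with Kishimoto's characterization of injectivity in Proposition \ref{kishimoto}. The implication $(1)\Rightarrow(2)$ should be immediate: if $N\cong R$ then $N\cong N\otimes R$, and Proposition \ref{connests} furnishes, for any contractions $x_1,\dots,x_n$ and any $\varepsilon>0$, elements $z_1,\dots,z_n\in R$ and a unitary $u\in N\otimes R$ with $\|(x_j\otimes I_R)-u(I_N\otimes z_j)u^*\|_2<\varepsilon$. Taking $\varphi=\text{Ad}(u)$, which is a trace-preserving (hence subtracial) $*$-automorphism and in particular ucp, gives the required map. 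To arrange $z_j\in(R)_{\leq 1}$, I would apply $\text{Ad}(u^*)$ and then the conditional expectation $\mb{E}_{\mathbb{C}\otimes R}=\tau_N\otimes\mathrm{id}_R$: since $u^*(x_j\otimes I_R)u$ is a contraction and $\mb{E}_{\mathbb{C}\otimes R}(I_N\otimes z_j)=z_j$, the images $z_j':=\mb{E}_{\mathbb{C}\otimes R}(u^*(x_j\otimes I_R)u)$ are contractions with $\|z_j-z_j'\|_2<\varepsilon$; replacing $z_j$ by $z_j'$ worsens the estimate by at most $\varepsilon$, so a harmless rescaling of $\varepsilon$ finishes this direction.

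The substance is in $(2)\Rightarrow(1)$, where the goal is to verify the hypothesis of Proposition \ref{kishimoto} and conclude that $N$ is injective, hence (being a separably acting II$_1$-factor) hyperfinite by \cite{connes}, hence isomorphic to $R$. So fix a finite set $F=\{x_1,\dots,x_n\}\subset(N)_{\leq 1}$ and $\varepsilon>0$, and let $z_1,\dots,z_n\in(R)_{\leq 1}$ and the subtracial ucp map $\varphi\colon N\otimes R\to N\otimes R$ be as supplied by $(2)$ for the tolerance $\varepsilon/2$. First I would use the hyperfiniteness of $R$ to choose a unital matrix subalgebra $\mb{M}_J\subset R$ and contractions $w_j:=\mb{E}_{\mb{M}_J}(z_j)\in(\mb{M}_J)_{\leq 1}$ with $\|z_j-w_j\|_2<\varepsilon/2$. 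Then I would define the candidate map of Proposition \ref{kishimoto} by
\[
\rho\colon\mb{M}_J\to N,\qquad \rho(w)=\mb{E}_{N\otimes\mathbb{C}}\bigl(\varphi(I_N\otimes w)\bigr),
\]
where $\mb{E}_{N\otimes\mathbb{C}}=\mathrm{id}_N\otimes\tau_R$ is the trace-preserving conditional expectation of $N\otimes R$ onto $N\otimes\mathbb{C}\cong N$. As a composition of the unital embedding $w\mapsto I_N\otimes w$, the ucp map $\varphi$, and a conditional expectation, $\rho$ is ucp, and it is unital since $\mb{M}_J$ is unital in $R$ and $\varphi$ is unital.

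The key estimate is then routine. Because $\mb{E}_{N\otimes\mathbb{C}}(x_j\otimes I_R)=x_j$ and conditional expectations are $\|\cdot\|_2$-contractive, the defining inequality of $(2)$ yields $\|x_j-\mb{E}_{N\otimes\mathbb{C}}(\varphi(I_N\otimes z_j))\|_2<\varepsilon/2$. The remaining error is controlled once one knows that $\varphi$ itself is a $\|\cdot\|_2$-contraction: this is where I would invoke the Kadison--Schwarz inequality $\varphi(a)^*\varphi(a)\leq\varphi(a^*a)$ together with subtraciality $\tau\circ\varphi\leq\tau$ to get $\|\varphi(a)\|_2\leq\|a\|_2$, whence $\|\mb{E}_{N\otimes\mathbb{C}}(\varphi(I_N\otimes(z_j-w_j)))\|_2\leq\|z_j-w_j\|_2<\varepsilon/2$. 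Combining the two bounds gives $\|x_j-\rho(w_j)\|_2<\varepsilon$, so $F\subset_{\|\cdot\|_2,\varepsilon}\rho((\mb{M}_J)_{\leq 1})$ and Proposition \ref{kishimoto} applies. I expect the main obstacle to be organizational rather than deep: correctly exploiting the two conditional expectations---$\mb{E}_{N\otimes\mathbb{C}}$ to turn the ``relative'' approximation of $(2)$ into an honest approximation of the $x_j$ inside $N$, and $\mb{E}_{\mb{M}_J}$ to replace the $z_j\in R$ by genuine matrices---while checking that $\varphi$ does not expand the trace-norm, so that every approximation survives the passage to matrices.
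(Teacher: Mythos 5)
Your argument is correct and follows essentially the same route as the paper: both directions hinge on Kishimoto's Proposition \ref{kishimoto}, with the required ucp map into a copy of $N$ built by restricting $\varphi$ to a unital matrix subalgebra $I_N\otimes \mb{M}_J \subset I_N\otimes R$ after approximating the $z_j$ by matrices. The only real difference is organizational: the paper applies Kishimoto to $N\otimes R\cong N$ after a Kaplansky-density reduction placing the $x_j$ in $N\otimes\mb{C}$, whereas you compose with $\mathrm{id}_N\otimes\tau_R$ to land in $N$ directly and thereby dispense with that reduction; your explicit check that subtracial ucp maps are $\|\cdot\|_2$-contractive, and your normalization of the $z_j$ to contractions in $(1)\Rightarrow(2)$, make precise points the paper leaves implicit.
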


\begin{proof}
We only need to prove (2) $\Rightarrow$ (1).  We wish to use Kishimoto's proposition.  Fix $\varepsilon> 0$ and $x_1,\dots, x_n \in (N)_{\leq 1}$.

\noin \textbf{Claim.} It suffices to assume that $x_1,\dots, x_n \in (N\otimes \mb{C})_{\leq 1}$. 

\noin \textbf{Proof of claim.} Since $N \cong N \otimes R$ there is an increasing sequence $(N_k)_{k \in \mb{N}}$ of subfactors of $N$, all isomorphic to $N$, with relative commutants isomorphic to $R$.  So we have the decompositions $N = N_k \otimes N_k'$ for every $k \in \mb{N}$.  Given $x_1,\dots, x_n \in (N)_{\leq 1}$ and $\varepsilon > 0$, by the Kaplansky density theorem, there are $k \in \mb{N}$ and  $x_1',\dots, x_n' \in (N_k)_{\leq 1}$ such that $\ds ||x_j - x_j'||_2 < \frac{\varepsilon}{2}$ for $1 \leq j \leq n$.  Thus, we may assume that $x_1,\dots, x_n \in (N \otimes \mb{C})_{\leq 1}.$  

Let $z_1,\dots, z_n \in (R)_{\leq 1}$ and $\varphi: N\otimes R \rightarrow N\otimes R$ be a subtracial ucp map such that \[||x_j - \varphi(I_N\otimes z_j)||_2 \\< \frac{\varepsilon}{2}\] for every $1 \leq j \leq n$.  Since $R$ is hyperfinite, there are a $J\in \mb{N}$ and $z_1',\dots, z_n' \in (\mb{M}_J)_{\leq 1}$ such that $\ds ||z_j - z_j'||_2 < \frac{\varepsilon}{2}$ for every $1 \leq j \leq n$.  Then we have a ucp map $\varphi|_{\mb{C}\otimes \mb{M}_J}: \mb{C}\otimes \mb{M}_J \rightarrow N\otimes R$ with 
\begin{align*}
||x_j - \varphi|_{\mb{C} \otimes \mb{M}_J}(I_N \otimes z_j')||_2 &\leq ||x_j - \varphi(1\otimes z_j)||_2 + ||\varphi(1\otimes z_j) - \varphi(1\otimes z_j')||_2\\
&\leq ||x_j - \varphi(1\otimes z_j)||_2 + ||1\otimes z_j - 1\otimes z_j'||_2\\
&< \varepsilon
\end{align*}  for every $1\leq j \leq n$. Then by Kishimoto's proposition, $N$ is injective.
\end{proof}

Using the results from \S \ref{ucp}, we obtain an upgrade of Theorem \ref{FGL}.  We first prepare with the following elementary lemma:

\begin{lem}\label{alwayscommute}
Let $(N,\tau)$ be a separably acting tracial von Neumann algebra satisfying CEP. Given any pair of embeddings $\pi_1,\pi_2: N\rightarrow R^\mc{U}$, there exist embeddings $\tilde{\pi}_1, \tilde{\pi}_2: N \rightarrow R^\mc{U}$ such that
\begin{enumerate}
    \item $\tilde{\pi}_i$ is unitarily equivalent to $\pi_i$ for $i =1,2$;
    \item $\tilde{\pi}_1(N)$ and $\tilde{\pi}_2(N)$ commute.
\end{enumerate}
\end{lem}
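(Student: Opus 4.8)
The plan is to exploit the self-absorbing isomorphism $R \cong R \otimes R$ to split $R^\mc{U}$ into two commuting copies of $R^\mc{U}$, place $\pi_1$ and $\pi_2$ into the separate legs, and then use weak approximate unitary equivalence to argue that pushing an embedding into a single leg does not change its unitary equivalence class.

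First I would fix an isomorphism $\theta: R \otimes R \to R$ and set $\iota(a) = \theta(a \otimes 1_R)$ and $\iota'(a) = \theta(1_R \otimes a)$; both are unital trace-preserving embeddings of $R$ into $R$ whose ranges commute elementwise. Passing to ultrapowers gives embeddings $\iota^\mc{U}, (\iota')^\mc{U}: R^\mc{U} \to R^\mc{U}$, and I would define $\tilde{\pi}_1 := \iota^\mc{U} \circ \pi_1$ and $\tilde{\pi}_2 := (\iota')^\mc{U} \circ \pi_2$. Property (2) is then immediate: since $\tilde{\pi}_1(N) \subseteq \iota(R)^\mc{U}$ and $\tilde{\pi}_2(N) \subseteq \iota'(R)^\mc{U}$, and $\iota(R)$ commutes with $\iota'(R)$ inside $R$, representatives drawn from the two subalgebras commute entrywise, so $\tilde{\pi}_1(N)$ and $\tilde{\pi}_2(N)$ commute.

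It remains to establish property (1), which reduces to the claim that for any embedding $\pi: N \to R^\mc{U}$ and the fixed embedding $\iota: R \to R$, the embedding $\iota^\mc{U} \circ \pi$ is unitarily equivalent to $\pi$. Here I would apply Proposition \ref{waue} with the ambient II$_1$-factor taken to be $R$ itself: the two unital embeddings $\mathrm{id}_R$ and $\iota$ of $R$ into $R$ are weakly approximately unitarily equivalent. Concretely, for every finite $F \subset R$ and $\delta > 0$ there is a unitary $u \in \mc{U}(R)$ with $\|u a u^* - \iota(a)\|_2 < \delta$ for all $a \in F$.

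The remaining—and technically central—step is a diagonal reindexing that upgrades this finitary data to a single ultrapower unitary. Fixing a countable $\|\cdot\|_2$-dense set $\{x^{(j)}\}_{j \in \mb{N}} \subset (N)_{\leq 1}$ and writing $\pi(x) = (\pi(x)_k)_\mc{U}$ with representatives $\pi(x)_k \in R$, I would for each $k$ apply the previous paragraph to the finite set $F_k = \{\pi(x^{(j)})_k : 1 \leq j \leq k\} \subset R$ with tolerance $k^{-1}$, obtaining unitaries $u_k \in \mc{U}(R)$. Setting $w := (u_k)_\mc{U} \in \mc{U}(R^\mc{U})$, one checks for each fixed $j$ that $w \pi(x^{(j)}) w^* = (u_k \pi(x^{(j)})_k u_k^*)_\mc{U} = (\iota(\pi(x^{(j)})_k))_\mc{U} = \iota^\mc{U}(\pi(x^{(j)}))$, since the $\mc{U}$-limit of $\|u_k \pi(x^{(j)})_k u_k^* - \iota(\pi(x^{(j)})_k)\|_2$ vanishes once $k \geq j$. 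Density then yields $\mathrm{Ad}(w) \circ \pi = \iota^\mc{U} \circ \pi$, and applying this to $\pi_1$ and $\pi_2$ finishes property (1). The main obstacle is precisely this reindexing: one must verify that the nested finite sets $F_k$ and shrinking tolerances can be arranged so that, for each fixed $j$, the approximation holds for $\mc{U}$-almost every $k$, which is exactly what makes the single unitary $w$ simultaneously intertwine all of $\pi$.
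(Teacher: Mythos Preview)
Your proof is correct and follows essentially the same strategy as the paper: both use an isomorphism $R\otimes R\cong R$ to place $\pi_1$ and $\pi_2$ into commuting tensor legs, and then argue that pushing into a leg preserves the unitary equivalence class. The only difference is packaging: the paper simply cites Remark~3.2.4 of \cite{topdyn} for the unitary equivalence step, whereas you spell it out explicitly by invoking Proposition~\ref{waue} for $\mathrm{id}_R$ and $\iota$ in $R$ and then running a diagonal/reindexing argument to assemble the intertwining unitary in $R^\mc{U}$.
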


\begin{proof}
Let $\sigma: R\otimes R \rightarrow R$ be a $*$-isomorphism.  Let $\tilde{\pi}_1$ be given by $\tilde{\pi}_1(x) = \sigma^\mc{U}(\pi_1(x) \otimes 1)$, and let $\tilde{\pi}_2$ be given by $\tilde{\pi}_2(x) = \sigma^\mc{U}(1\otimes \pi_2(x))$ where $\sigma^\mc{U}: (R\otimes R)^\mc{U} \rightarrow R^\mc{U}$ is the isomorphism induced by $\sigma$. Following Remark 3.2.4 in \cite{topdyn}, we have that $\tilde{\pi}_i$ is unitarily equivalent to $\pi_i$ for $i = 1,2$. Also, $\tilde{\pi}_1$ and $\tilde{\pi}_2$ clearly have commuting ranges.
\end{proof}

\begin{thm}\label{mr}
Let $(N,\tau)$ be a separably acting tracial von Neumann algebra satisfying CEP, and let $\left\{M_k\right\}$ be a sequence of II$_1$-factors. Let $\alpha \in \text{Aut}(N\otimes N)$ be the flip automorphism given by $\alpha(x\otimes y) = y \otimes x$.The following are equivalent. 

\begin{enumerate}
\item $N$ is amenable;

\item Any two embeddings $\ds \pi,\rho: N \rightarrow \prod_{k\rightarrow \mc{U}}M_k$ are ucp-conjugate

\item Given any embedding $\ds \pi: N \otimes N \rightarrow \prod_{k\rightarrow \mc{U}}M_k, \pi$ and $\pi \circ \alpha$ are ucp-conjugate.

\item Given any embedding $\ds \pi: N \otimes N \rightarrow \prod_{k\rightarrow \mc{U}}M_k, \pi|_{N\otimes \mb{C}}$ and $\pi \circ \alpha|_{N\otimes \mb{C}}$ are ucp-conjugate.  That is, any pair of commuting embeddings of $N$ into $\ds \prod_{k\rightarrow \mc{U}}M_k$ are ucp-conjugate.
\end{enumerate}

\end{thm}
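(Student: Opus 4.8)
The plan is to prove the chain $(1)\Rightarrow(2)$, together with $(1)\Rightarrow(3)\Rightarrow(4)\Rightarrow(1)$, which closes all the equivalences. The equivalence of $(1)$ and $(2)$ is precisely the content of Corollary \ref{jungv2cor}, so no new work is needed there; the point of the theorem is to splice the two ``flip'' conditions $(3)$ and $(4)$ into that equivalence, and the mechanism that makes this possible is the self-absorption $R\otimes R\cong R$ packaged in Lemma \ref{alwayscommute}.

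For $(1)\Rightarrow(3)$ I would observe that if $N$ is amenable then $N\otimes N$ is amenable as well, and an amenable finite von Neumann algebra automatically satisfies CEP. Applying Corollary \ref{jungv2cor} to $N\otimes N$ then says that \emph{any} two embeddings of $N\otimes N$ into $\prod_{k\rightarrow\mc{U}}M_k$ are ucp-conjugate; in particular $\pi$ and $\pi\circ\alpha$ are, giving $(3)$. The implication $(3)\Rightarrow(4)$ is immediate: a sequence of ucp maps $(\varphi_k)_\mc{U}$ conjugating $\pi$ to $\pi\circ\alpha$ restricts verbatim to conjugate $\pi|_{N\otimes\mb{C}}$ to $\pi\circ\alpha|_{N\otimes\mb{C}}$, and the reformulation as ``any pair of commuting embeddings of $N$ is ucp-conjugate'' is just the observation that a commuting pair $\pi_1,\pi_2\colon N\to\prod_{k\rightarrow\mc{U}}M_k$ assembles into a single embedding of $N\otimes N$ whose two slots are recovered as $\pi|_{N\otimes\mb{C}}$ and $\pi\circ\alpha|_{N\otimes\mb{C}}$.

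The crux is $(4)\Rightarrow(1)$, which I would prove by showing directly that any two embeddings $\sigma_1,\sigma_2\colon N\to R^\mc{U}$ are ucp-conjugate and then invoking Theorem \ref{jungv2}. First I would apply Lemma \ref{alwayscommute} to replace $\sigma_1,\sigma_2$ by unitarily equivalent embeddings $\tilde\sigma_1,\tilde\sigma_2$ with commuting ranges. Fixing copies $\iota_k\colon R\hookrightarrow M_k$ and the induced inclusion $\iota\colon R^\mc{U}\hookrightarrow\prod_{k\rightarrow\mc{U}}M_k$, the compositions $\iota\circ\tilde\sigma_1$ and $\iota\circ\tilde\sigma_2$ are commuting embeddings of $N$ into $\prod_{k\rightarrow\mc{U}}M_k$, so by $(4)$ there are ucp maps $\varphi_k\colon M_k\to M_k$ with $(\varphi_k)_\mc{U}\circ(\iota\circ\tilde\sigma_1)=\iota\circ\tilde\sigma_2$. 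Precisely as in the proof of Corollary \ref{jungv2cor}, I would compress back to $R$ by setting $\psi_k:=\iota_k^{-1}\circ\mb{E}_{\iota_k(R)}\circ\varphi_k\circ\iota_k$, where $\mb{E}_{\iota_k(R)}$ is the conditional expectation of $M_k$ onto $\iota_k(R)$; since $\mb{E}_{\iota_k(R)}$ fixes $\iota_k(R)$ pointwise, this yields $(\psi_k)_\mc{U}\circ\tilde\sigma_1=\tilde\sigma_2$. Finally, unitary conjugation is a special case of ucp-conjugation (write the implementing unitaries of $R^\mc{U}$ as $(v_k)_\mc{U}$ and use the maps $\mathrm{Ad}(v_k)$), so composing with the conjugations relating $\sigma_i$ to $\tilde\sigma_i$ produces ucp maps $\chi_k\colon R\to R$ with $(\chi_k)_\mc{U}\circ\sigma_1=\sigma_2$. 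As $\sigma_1,\sigma_2$ were arbitrary, Theorem \ref{jungv2} forces $N$ to be injective, hence amenable.

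The main obstacle to watch is the descent in $(4)\Rightarrow(1)$: one must verify that composing the conditional expectations, the isomorphisms $\iota_k$, and the unitary-conjugation maps indeed yields genuine \emph{ucp} maps on $R$ that assemble to the required ultraproduct conjugation, and—more fundamentally—that the commuting-ranges hypothesis of $(4)$ is actually available. This is exactly where Lemma \ref{alwayscommute}, and with it the self-absorbing nature of $R$, is indispensable, since no analogous commuting replacement is available in the codomain $\prod_{k\rightarrow\mc{U}}M_k$ itself. One should also confirm that the direction of ``ucp-conjugate'' supplied by the argument matches the direction demanded by Theorem \ref{jungv2}; because the entire construction is symmetric in $\sigma_1$ and $\sigma_2$, both directions are obtained.
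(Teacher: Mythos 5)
Your proof is correct and follows essentially the same route as the paper: the only nontrivial implication is $(4)\Rightarrow(1)$, and both arguments run it through Lemma \ref{alwayscommute} to manufacture commuting (unitarily equivalent) replacements in $R^\mc{U}$, push them into $\prod_{k\rightarrow\mc{U}}M_k$, and descend via the conditional expectations onto $\iota_k(R)$ exactly as in Corollary \ref{jungv2cor} before invoking Theorem \ref{jungv2}. The paper phrases this contrapositively (starting from two non-ucp-conjugate embeddings when $N$ is non-amenable) whereas you argue directly, and you spell out the descent and the compatibility with the unitary conjugations that the paper leaves implicit, but the mathematics is identical.
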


\begin{proof}
It suffices to show that (4) implies (1).  We will use the equivalent formulation of condition (4): any pair of embeddings $\ds \pi_1,\pi_2: N \rightarrow \prod_{k\rightarrow \mc{U}} M_k$ with commuting ranges are ucp-conjugate. Suppose that $N$ is not amenable.  Then by Theorem \ref{jungv2} there are two embeddings $\pi_1,\pi_2: N\rightarrow R^\mc{U}$ that are not ucp-conjugate.  By Lemma \ref{alwayscommute}, we can find $\tilde{\pi}_1,\tilde{\pi}_2: N \rightarrow R^\mc{U}$ with commuting ranges such that $\tilde{\pi}_i$ is unitarily equivalent to $\tilde{\pi}_i$ for $i = 1,2$.  As done before, obtain an embedding $\ds R^\mc{U} \hookrightarrow \prod_{k\rightarrow \mc{U}} M_k$ via embeddings $R \hookrightarrow M_k$.  Then the embeddings $\ds \tilde{\pi}_1,\tilde{\pi}_2: N\rightarrow R^\mc{U} \hookrightarrow \prod_{k\rightarrow \mc{U}} M_k$ have commuting ranges and are not ucp-conjugate.
\end{proof}

\begin{dfn}
Let $N$ be a von Neumann algebra, and let $M$ be a II$_1$-factor.  Let $\HOM(N,M)$ denote the space of all unital $*$-homomorphisms $\pi: N \rightarrow M$ modulo unitary equivalence. Let $[\pi]$ denote the unitary equivalence class of $\pi: N\rightarrow M$.
\end{dfn}

In \cite{topdyn} Brown studied the action of $\text{Out}(N)$ on $\HOM(N,R^\mc{U})$ given by $[\pi] \mapsto [\pi \circ \alpha^{-1}]$.  In particular, it is of interest when $\text{Out}(N)$ acts nontrivially on $\HOM(N,R^\mc{U})$.  Theorem \ref{mr} yields the following result in this context.

\begin{cor}
Let $N$ be a nonamenable separable tracial von Neumann algebra satisfying CEP.  Let $\alpha \in \text{Out}(N\otimes N)$ denote the flip automorphism.  Then $[\pi] \mapsto [\pi \circ \alpha^{-1}]$ is a nontrivial involutive action on $\HOM(N\otimes N, R^\mc{U})$.
\end{cor}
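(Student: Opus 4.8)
The plan is to verify in turn that the assignment $[\pi]\mapsto[\pi\circ\alpha^{-1}]$ is well-defined on $\HOM(N\otimes N, R^\mc{U})$, that it furnishes a genuine involutive action, and finally---this being the crux---that it is nontrivial.

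First I would record well-definedness. If $\pi,\rho\colon N\otimes N\to R^\mc{U}$ are unitarily equivalent, say $\rho=u\pi u^*$ for some $u\in\mc{U}(R^\mc{U})$, then precomposing with $\alpha^{-1}$ gives $\rho\circ\alpha^{-1}=u(\pi\circ\alpha^{-1})u^*$, so $[\pi\circ\alpha^{-1}]=[\rho\circ\alpha^{-1}]$ and the map descends to unitary equivalence classes. The same computation, applied to an inner automorphism $\alpha=\mathrm{Ad}(v)$ with $v\in\mc{U}(N\otimes N)$, shows that $\pi\circ\alpha^{-1}$ is the conjugate of $\pi$ by the unitary $\pi(v)$; hence inner automorphisms act trivially and the assignment descends to an action of $\mathrm{Out}(N\otimes N)$. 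Involutivity is then immediate: the flip satisfies $\alpha^2=\mathrm{id}$, so $\alpha^{-1}=\alpha$ and $[\pi]\mapsto[\pi\circ\alpha^{-1}]$ composed with itself sends $[\pi]$ to $[\pi\circ\alpha^{-2}]=[\pi]$.

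The heart of the argument is nontriviality, for which I would invoke Theorem \ref{mr} with $M_k=R$ for every $k$, so that $\prod_{k\to\mc{U}}M_k=R^\mc{U}$. Since $N$ is non-amenable, condition (1) fails, and therefore so does the equivalent condition (3): there exists an embedding $\pi\colon N\otimes N\to R^\mc{U}$ for which $\pi$ and $\pi\circ\alpha$ are \emph{not} ucp-conjugate. It remains to upgrade this from ucp-nonconjugacy to non--unitary-equivalence, which goes in the easy direction: as recorded in \S\ref{ucp}, any unitary $u\in\mc{U}(R^\mc{U})$ lifts to a sequence of unitaries $u_k\in\mc{U}(R)$, so conjugation by $u$ is implemented by the $*$-automorphisms (in particular subtracial ucp maps) $\mathrm{Ad}(u_k)$, whence unitary equivalence implies ucp-conjugacy. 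Consequently $\pi$ and $\pi\circ\alpha$ cannot be unitarily equivalent, giving $[\pi]\neq[\pi\circ\alpha]=[\pi\circ\alpha^{-1}]$; the involution has a non-fixed point and is thus nontrivial.

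The only place where real content enters is the nontriviality step, and even there the work has already been done upstream: Theorem \ref{mr} supplies the obstruction to ucp-conjugacy, and the passage to unitary equivalence is a one-line consequence of the lifting of unitaries in $R^\mc{U}$. I anticipate no genuine obstacle---the main thing to keep straight is that we only need the easy implication ``unitary equivalence $\Rightarrow$ ucp-conjugacy,'' since it is the hard direction of Theorem \ref{mr} that guarantees the existence of the separating embedding $\pi$ in the first place.
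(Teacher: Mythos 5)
Your argument is correct and follows exactly the route the paper intends: the corollary is stated as an immediate consequence of Theorem \ref{mr}, and your nontriviality step---specializing to $M_k = R$, negating condition (3), and then using that unitaries in $R^\mc{U}$ lift to sequences of unitaries so that unitary equivalence implies ucp-conjugacy---is precisely the derivation the paper leaves implicit. The routine verifications of well-definedness and involutivity are also handled correctly.
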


In \S \ref{ucp}, we mentioned Theorem \ref{saasep} as a separable version of Jung's theorem. We can strengthen Theorem \ref{saasep} to the following separable version of Theorem \ref{mr}.  We note that this is a nontrivial consequence that does not readily follow from Theorem \ref{saasep} due to the unavailability of a separable version of Lemma \ref{alwayscommute}.

\begin{thm}\label{wauets}
Let $(N,\tau)$ be a separably acting tracial von Neumann algebra satisfying CEP. Let $\alpha \in \text{Aut}(N\otimes N)$ denote the flip automorphism given by $\alpha(x\otimes y) = y \otimes x$. The following are equivalent. 

\begin{enumerate}
\item $N$ is amenable;

\item For every separably acting II$_1$-factor $M$, any two embeddings $\pi,\rho: N \rightarrow M$, are weakly approximately unitarily equivalent;

\item For every separably acting II$_1$-factor $M$ and any embedding $\pi: N \otimes N \rightarrow M, \pi$ and $\pi \circ \alpha$ are weakly approximately unitarily equivalent;

\item For every separably acting II$_1$ factor $M$ and any embedding $\pi: N \otimes N \rightarrow M, \pi|_{N\otimes \mb{C}}$ and $\pi \circ \alpha|_{N\otimes \mb{C}}$ are weakly approximately unitarily equivalent.
\end{enumerate}
\end{thm}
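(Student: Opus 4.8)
The plan is to prove the equivalences along the cycle $(1)\Rightarrow(2)\Rightarrow(4)\Rightarrow(1)$ and to fold in condition $(3)$ via $(1)\Rightarrow(3)\Rightarrow(4)$; every implication except $(4)\Rightarrow(1)$ is soft, and the real content lies in that final one.

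For $(1)\Rightarrow(2)$ I would argue directly from hyperfiniteness, without invoking CEP. Write $N=\overline{\bigcup_i N_i}$ (closure in $||\cdot||_2$) with $N_i$ finite dimensional and increasing. Given embeddings $\pi,\rho\colon N\to M$, a finite $F\subset N$, and $\varepsilon>0$, approximate $F$ in $||\cdot||_2$ by elements of some $N_i$; since $\pi|_{N_i}$ and $\rho|_{N_i}$ are unital trace-preserving $*$-homomorphisms of a finite-dimensional algebra into the II$_1$-factor $M$, equal traces of the corresponding minimal projections force Murray--von Neumann equivalence, so there is a unitary $u\in\mc{U}(M)$ with $\pi|_{N_i}=\text{Ad}(u)\circ\rho|_{N_i}$ exactly; the triangle inequality then yields $||\pi(x)-u\rho(x)u^*||_2<\varepsilon$ on $F$, giving waue. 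For $(1)\Rightarrow(3)$ I would note that $N\otimes N$ is again amenable and apply the same argument to the two embeddings $\pi,\pi\circ\alpha\colon N\otimes N\to M$. Both $(2)\Rightarrow(4)$ and $(3)\Rightarrow(4)$ are immediate from the identity $\pi\circ\alpha|_{N\otimes\mb{C}}=\pi|_{\mb{C}\otimes N}$: the restrictions $\pi|_{N\otimes\mb{C}}$ and $\pi|_{\mb{C}\otimes N}$ are embeddings of $N$ into $M$, so $(2)$ applies to them, while waue plainly passes to restrictions, giving $(3)\Rightarrow(4)$.

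The heart of the proof is $(4)\Rightarrow(1)$, which I would prove by contraposition. Suppose $N$ is not amenable. By Theorem \ref{jungv2} there is a pair of embeddings $\pi_1,\pi_2\colon N\to R^\mc{U}$ that are not ucp-conjugate, and by Lemma \ref{alwayscommute} I may replace them by unitarily equivalent embeddings $\tilde{\pi}_1,\tilde{\pi}_2$ with commuting, tensor-independent ranges (this is exactly the construction already used in the proof of Theorem \ref{mr}). Unitary equivalence preserves ucp-conjugacy, so $\tilde{\pi}_1,\tilde{\pi}_2$ are still not ucp-conjugate; tensor-independence lets them assemble into a single embedding $\pi\colon N\otimes N\to R^\mc{U}$ with $\pi|_{N\otimes\mb{C}}=\tilde{\pi}_1$ and $\pi\circ\alpha|_{N\otimes\mb{C}}=\tilde{\pi}_2$. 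Since a unitary conjugation in $R^\mc{U}$ lifts to a sequence of unitaries and is therefore a ucp-conjugation, the pair $\tilde{\pi}_1,\tilde{\pi}_2$ cannot be unitarily conjugate in $R^\mc{U}$; by the countable saturation of $R^\mc{U}$ (Theorem 3.1 of \cite{autoultra}), weak approximate unitary equivalence into $R^\mc{U}$ would upgrade to genuine unitary conjugacy, so $\tilde{\pi}_1$ and $\tilde{\pi}_2$ are not weakly approximately unitarily equivalent in $R^\mc{U}$.

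The main obstacle, and the step that substitutes for the missing separable analogue of Lemma \ref{alwayscommute}, is passing to a separable codomain. Here I would choose a separable II$_1$-subfactor $M_0$ with $\pi(N\otimes N)\subset M_0\subset R^\mc{U}$ (every separable subalgebra of a II$_1$-factor is contained in a separable subfactor). Because $\mc{U}(M_0)\subset\mc{U}(R^\mc{U})$, failure of waue in $R^\mc{U}$ forces failure of waue in $M_0$: fewer conjugating unitaries are available, so non-equivalence can only persist downward. Thus $\pi\colon N\otimes N\to M_0$ is an embedding into a separable II$_1$-factor whose two commuting corestrictions $\pi|_{N\otimes\mb{C}}$ and $\pi\circ\alpha|_{N\otimes\mb{C}}$ are not weakly approximately unitarily equivalent, so $(4)$ fails. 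The conceptual point I want to emphasize is that one never needs to force two prescribed embeddings to commute inside a given separable $M$; instead one performs the commuting construction in $R^\mc{U}$, where $R\otimes R\cong R$ is available, and transports the \emph{negative} conclusion downward to a separable subfactor, exploiting the monotonicity of waue in the ambient unitary group.
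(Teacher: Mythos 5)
Your proposal is correct and follows essentially the same route as the paper: the soft implications are handled as expected, and for $(4)\Rightarrow(1)$ you produce commuting non-ucp-conjugate embeddings into $R^\mc{U}$ (via Theorem \ref{jungv2} and Lemma \ref{alwayscommute}, which is exactly the content of Theorem \ref{mr} that the paper invokes), pass to a separable subfactor containing both ranges, and use Theorem 3.1 of \cite{autoultra} together with the lifting of unitaries to rule out weak approximate unitary equivalence. The only cosmetic difference is that you run the final contradiction in the forward direction (non-ucp-conjugate $\Rightarrow$ not unitarily conjugate $\Rightarrow$ not waue in $R^\mc{U}$ $\Rightarrow$ not waue in $M_0$) whereas the paper runs its contrapositive, which is logically identical.
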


\begin{proof}
Again, we only need to prove $(4) \Rightarrow (1)$.  And again, we use the following condition equivalent to (4): for every separably acting II$_1$-factor $M$ and any two embeddings $\pi, \rho: N \rightarrow M$ with $\pi(N) \subset \rho(N)'\cap M$, $\pi$ and $\rho$ are weakly approximately unitarily equivalent.

Suppose that $N$ is not amenable.  Then by Theorem \ref{mr}, there are embeddings $\pi, \rho: N \rightarrow R^\mc{U}$ with commuting ranges that are not ucp-conjugate.  By Lemma 3.15 of \cite{saa}, there is a separably acting II$_1$-factor $M$ with $M \subset R^\mc{U}$ such that $\pi(N),\rho(N) \subset M$.  We claim that when considered as embeddings into $M$, $\pi$ and $\rho$ are not weakly approximately unitarily equivalent.  If they were, then by Theorem 3.1 of \cite{autoultra}, $\pi$ and $\rho$ would be unitarily equivalent in $R^\mc{U}$ which in turn implies that $\pi$ and $\rho$ are ucp-conjugate--a contradiction.
\end{proof}

Theorem \ref{mr} sheds some light on Question \ref{waucpq}.  Consider the following property in connection to this question:

\begin{dfn}
A separably acting tracial von Neumann algebra $(N,\tau)$ satisfying CEP has the \emph{ultra ucp lifting property} if for any embedding $N \subset R^\mc{U}$ and ucp map $\varphi: N \rightarrow N$, there is a sequence of ucp maps $\varphi_k: R\rightarrow R$ such that $\pi\circ \varphi(x) = (\varphi_k)_\mc{U}(x) \circ \pi$.  That is, $\pi$ and $\pi \circ \varphi$ are ucp-conjugate.
 \end{dfn}
 
 \noin It follows that if every II$_1$ factor has the ultra ucp lifting property, then Question \ref{waucpq} would be resolved in the affirmative.  It turns out that we can use Theorem \ref{mr} to show that the ultra ucp lifting property is in fact a rare property, at least for tensor-square II$_1$ factors.  The following result was obtained in a conversation with Pieter Spaas; we would like to thank him for allowing us to include it here.
 
  \begin{thm}\label{tsuucplp}
 Let $(N,\tau)$ be a tracial von Neumann algebra satisfying the CEP.  The following are equivalent.
 
 \begin{enumerate}
 	\item $N$ is amenable;
	\item $N\otimes N$ has the ultra ucp lifting property.
 \end{enumerate}
 \end{thm}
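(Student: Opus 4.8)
The plan is to prove the two implications separately. The direction that looks harder, $(2)\Rightarrow(1)$, turns out to be an immediate application of Theorem \ref{mr} to the flip automorphism, so the real work is in $(1)\Rightarrow(2)$, which I would establish by hand: replace the given embedding by an honest sequence of $*$-homomorphisms into $R$ and then transport the ucp map $\varphi$ level by level using conditional expectations.

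For $(2)\Rightarrow(1)$, observe that the flip $\alpha\in\text{Aut}(N\otimes N)$ given by $\alpha(x\otimes y)=y\otimes x$ is a trace-preserving $*$-automorphism, hence in particular a subtracial ucp map $N\otimes N\to N\otimes N$, so it is an admissible $\varphi$ in the definition of the ultra ucp lifting property. Thus if $N\otimes N$ has that property, then for \emph{every} embedding $\pi:N\otimes N\to R^\mc{U}$ the maps $\pi$ and $\pi\circ\alpha$ are ucp-conjugate. Specializing Theorem \ref{mr} to the constant sequence $M_k=R$, so that $\prod_{k\to\mc{U}}M_k=R^\mc{U}$, this is exactly condition $(3)$ of that theorem; hence by the implication $(3)\Rightarrow(1)$ of Theorem \ref{mr}, $N$ is amenable. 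The only points to check here are that the two notions of ucp-conjugacy (the one in the ultra ucp lifting property and the one in Definition \ref{wms}) coincide, and that $\alpha$ is a legitimate choice of $\varphi$, both of which are routine.

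For $(1)\Rightarrow(2)$, suppose $N$ is amenable, so that $M:=N\otimes N$ is amenable, hence injective and satisfying CEP. Fix a trace-preserving embedding $\iota_0:M\hookrightarrow R$ (which exists by amenability) and let $\iota_0^\mc{U}:M\to R^\mc{U}$ be the induced constant embedding. Given an arbitrary embedding $\pi:M\to R^\mc{U}$ and a subtracial ucp map $\varphi:M\to M$, Corollary \ref{jungv2cor} shows that $\pi$ is unitarily conjugate to $\iota_0^\mc{U}$; writing the implementing unitary as $(u_k)_\mc{U}$ with $u_k\in\mc{U}(R)$, I obtain honest trace-preserving $*$-homomorphisms $\pi_k:=\text{Ad}(u_k)\circ\iota_0:M\to R$ with $\pi(x)=(\pi_k(x))_\mc{U}$. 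Each $\pi_k$ is injective, so I may let $\mb{E}_k:R\to\pi_k(M)$ be the trace-preserving conditional expectation onto $\pi_k(M)$ and define the subtracial ucp maps $\varphi_k:=\pi_k\circ\varphi\circ\pi_k^{-1}\circ\mb{E}_k:R\to R$. Since $\mb{E}_k$ fixes $\pi_k(M)$ pointwise, one computes $\varphi_k(\pi_k(x))=\pi_k(\varphi(x))$; using that subtracial ucp maps are $||\cdot||_2$-contractive (so that the value $(\varphi_k(\pi(x)_k))_\mc{U}$ is independent of the chosen representative of $\pi(x)$, and using the representative $\pi_k(x)$), I conclude that $(\varphi_k(\pi(x)_k))_\mc{U}=(\pi_k(\varphi(x)))_\mc{U}=\pi(\varphi(x))$ for every $x\in M$. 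Thus $\pi\circ\varphi=(\varphi_k)_\mc{U}\circ\pi$, so $\pi$ and $\pi\circ\varphi$ are ucp-conjugate and $M=N\otimes N$ has the ultra ucp lifting property.

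The apparent obstacle is exactly the one flagged in \S\ref{ucp}: a ucp map on $R^\mc{U}$—for instance a conditional expectation onto a separable subalgebra—need not decompose as $(\varphi_k)_\mc{U}$, so one cannot naively realize $\pi\circ\varphi$ by working with ucp maps on $R^\mc{U}$ directly. The plan sidesteps this entirely by never producing a ucp map on $R^\mc{U}$: amenability is used precisely to lift $\pi$ to an honest sequence of $*$-homomorphisms $\pi_k$ into $R$ (this is where injectivity, via Corollary \ref{jungv2cor}, is essential and where a non-amenable $N$ would fail), after which the conditional expectations $\mb{E}_k$ and the transported copies $\pi_k\circ\varphi\circ\pi_k^{-1}$ live entirely at the finite level, where they are genuine ucp maps on $R$. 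The conceptual heart of the argument is thus the passage from the non-decomposable ultraproduct picture to a finite-stage picture, made possible by the honest lifting of embeddings of amenable algebras.
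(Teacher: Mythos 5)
Your proposal is correct and follows essentially the same route as the paper: the $(2)\Rightarrow(1)$ direction is the same appeal to Theorem \ref{mr} with the flip automorphism (the paper phrases it contrapositively), and your $(1)\Rightarrow(2)$ argument is the paper's "reduce to the constant-sequence embedding by unitary conjugacy and transport $\varphi$ levelwise" argument, just with the WLOG unpacked explicitly via the lifted unitaries $u_k$ and the conditional expectations $\mb{E}_k$. No gaps.
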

 
 \begin{proof}
 If $N$ is amenable, then it has the ucp-lifting property (and thus, so does $N\otimes N$).   Indeed, consider $N \subset R$. Since all embeddings of $N$ into $R^\mc{U}$ are unitarily conjugate, we may assume without loss of generality that $N \subset R \subset R^\mc{U}$ is embedded via the constant-sequence embedding.  Let $\varphi: N \rightarrow N$ be a ucp map.  Then letting $\varphi_k = \varphi$ for every $k$ yields the desired sequence of ucp maps.
 
 If $N$ is not amenable, then by Theorem \ref{mr}, there exists an embedding $\pi: N\otimes N \rightarrow R^\mc{U}$ such that $\pi$ and $\pi \circ \alpha$ are not ucp-conjugate where $\alpha$ denotes the flip automorphism of $N \otimes N$.  Evidently, $N\otimes N$ does not have the ultra ucp lifting property.
 \end{proof}
 
 \noin This result naturally leads us to the following question:
 
 \begin{?}
 Let $(N,\tau)$ be a separable tracial von Neumann algebra satisfying CEP.  If $N$ has the ultra ucp lifting property, then does it follow that $N$ is amenable?
 \end{?}

\subsection{Commuting sofic representations}\label{comsof}

A parallel to Jung's theorem exists in sofic group theory: if any two sofic representations of a sofic group $\Gamma$ are conjugate in the universal sofic group, then $\Gamma$ is amenable. This is due to Elek-Szabo in \cite{elekszabo}. In view of the result on commuting embeddings in Theorem \ref{mr}, it is natural to consider its group analog. This case turns out to be more subtle.  In the tracial von Neumann algebra setting, the self-absorbing nature of $R$ allows us to amplify embeddings while remaining unitarily conjugate. No such self-absorbing behavior is available in the group setting.
In fact, it is a very difficult question to ask whether two sofic embeddings are conjugate if they are conjugate in some amplification.
Fortunately, the work of Elek-Szabo is robust enough to accommodate for this additional commuting condition as we observe in this section. 

Recall the following setup from \cite{elekszabo}. Let $\Gamma$ be a finitely generated group with symmetric generating set $S$. Let $G$ be a finite graph such that each directed edge of $G$ is labeled by an element of $S$. 

\begin{dfn}
We say that $G$ is an \emph{$r$-approximation} of $Cay(\Gamma,S)$ if there exists a subset $W\subseteq V(G)$ such that $|W|>(1-1/r)|V(G)|$ and if $p\in W$ then the $r$-neighborhood of $p$ is rooted isomorphic to the $r$-neighborhood of a vertex of the Cayley graph of $\Gamma$, as edge labeled graphs.
\end{dfn}

\begin{dfn}
We say that $\Gamma$ is \emph{sofic} if for any $r\geq 1$, there exists $r$-approximations of $Cay(\Gamma,S)$ by finite graphs. Call a sequence of graphs $\{G_k\}$ a \emph{sofic approximation} if for all $r\geq 1$, there exists $N$ such that for all $k>N$, $G_k$ is an $r$-approximation of $Cay(\Gamma,S)$.
\end{dfn}

Let $\mc{U}$ be a free ultrafilter on $\mb{N}$. For any sequence $\{n_k\}_{k=1}^{\infty}$, construct a universal sofic group $\mc{S}= \prod_{k\to \mc{U}} \mb{S}_{n_k}$, where $\mb{S}_{n_k}$ is the symmetric group on $n_k$ letters, and the metric used is the normalized Hamming distance $d_n$ on $\mb{S}_n$: \[d_n(\sigma_1,\sigma_2) = n^{-1} \cdot \left|\left\{j \in \left\{1,\dots,n\right\} : \sigma_1(j) \neq \sigma_2(j)\right\}\right|.\] An injective homomorphism $\pi: \Gamma \to \mc{S}$ is called a sofic representation if \[\lim_{k\to \mc{U}}d_{n_k}(g_k, 1_{\mb{S}_{n_k}})= 1\] for every $g \in \Gamma$ where $\pi(g) = (g_k)_\mc{U}$. The following result is standard.

\begin{lem}
From any sofic representation $\pi: \Gamma\to \mc{S}$, we get a sofic approximation $\{G_{n_k}\}$ of $Cay(\Gamma,S)$, and vice versa.
\end{lem}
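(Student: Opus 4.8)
The plan is to set up the standard dictionary between $S$-edge-labeled finite graphs and $S$-indexed tuples of permutations, and to observe that the defining conditions on the two sides translate into one another verbatim. I would fix representatives: for each generator $s \in S$ write $\pi(s) = (\sigma_s^{(k)})_\mc{U}$ with $\sigma_s^{(k)} \in \mb{S}_{n_k}$, so that every element $g = s_{i_1}\cdots s_{i_\ell}$ acquires a representative $g_k = \sigma_{s_{i_1}}^{(k)}\cdots \sigma_{s_{i_\ell}}^{(k)}$. To the index $k$ I associate the graph $G_{n_k}$ on vertex set $\{1,\dots,n_k\}$ with a directed $s$-labeled edge from $p$ to $\sigma_s^{(k)}(p)$ for every $p$ and every $s \in S$. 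The content of the lemma is that, under this dictionary, local isomorphism of rooted $r$-neighborhoods to the Cayley graph is equivalent to approximate satisfaction of all relations of length $\le 2r$ together with approximate freeness of all non-relations of length $\le 2r$, and the latter is exactly what the sofic representation condition encodes.

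For the forward direction, fix $r \ge 1$. A vertex $p$ of $G_{n_k}$ has its rooted $r$-neighborhood isomorphic, as an $S$-edge-labeled graph, to the $r$-ball of $Cay(\Gamma,S)$ precisely when the orbit map $w \mapsto w(\sigma^{(k)})\cdot p$ on the word-ball of radius $r$ is label-preserving and injective up to the relations of $\Gamma$. Label-preservation amounts to $w(\sigma^{(k)})\,p = p$ for every word $w$ with $|w| \le 2r$ and $w =_\Gamma e$; injectivity amounts to $w(\sigma^{(k)})\,p \ne p$ for every word $w$ with $|w| \le 2r$ and $w \ne_\Gamma e$. There are only finitely many words of length $\le 2r$. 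For the first family the homomorphism property gives $\pi(w) = 1$ in $\mc{S}$, hence $\lim_{k\to\mc{U}} d_{n_k}(w(\sigma^{(k)}),1) = 0$; for the second family the sofic condition gives $\lim_{k\to\mc{U}} d_{n_k}(w(\sigma^{(k)}),1) = 1$. Intersecting these finitely many $\mc{U}$-conditions, for $\mc{U}$-almost every $k$ the proportion of vertices $p$ meeting both requirements exceeds $1 - 1/r$; such a vertex set serves as the witness $W$, so $G_{n_k}$ is an $r$-approximation for $\mc{U}$-almost every $k$. Since the sets $\{k : G_{n_k}\text{ is an }r\text{-approximation}\}$ are nested in $r$ and each lies in $\mc{U}$, re-indexing along $\mc{U}$ produces a sofic approximation $\{G_{n_k}\}$.

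Conversely, given a sofic approximation $\{G_{n_k}\}$, I would manufacture the permutations from the edge labels. On the good set $W_k$ (of proportion $> 1 - 1/r$, with $r$-neighborhoods modeled on the Cayley graph) each label $s$ determines an injective partial map $p \mapsto$ (head of the $s$-edge out of $p$); as the complement has vanishing proportion, this partial map extends to a genuine permutation $\sigma_s^{(k)} \in \mb{S}_{n_k}$, and I set $\pi(s) = (\sigma_s^{(k)})_\mc{U}$. For any relation $w =_\Gamma e$, choosing $r \ge |w|$ forces $w(\sigma^{(k)})\,p = p$ on every good $p$, whence $\lim_{k\to\mc{U}} d_{n_k}(w(\sigma^{(k)}),1) = 0$ and $\pi(w) = 1$; thus $\pi$ is a well-defined homomorphism. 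For $g \ne_\Gamma e$, choosing $r > |g|$ forces $w(\sigma^{(k)})\,p \ne p$ on every good $p$ (the root and its $g$-translate are distinct vertices of the $r$-ball), so $d_{n_k}(g_k,1) \ge |W_k|/n_k \to 1$; this simultaneously yields the sofic condition and the injectivity of $\pi$.

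The routine parts are the finite-intersection bookkeeping and the observation that the local picture is governed by words of length $\le 2r$. The one genuinely delicate point is the passage between the two notions of largeness: the representation is phrased with the ultrafilter limit ($\lim_{k\to\mc{U}} = 1$), whereas a sofic approximation asks for an honest tail condition for each $r$. In the forward direction this is resolved by restricting to the nested $\mc{U}$-large index sets and re-indexing; in the converse it is the extension of the partial edge-maps to honest permutations that must be arranged so the discrepancy stays confined to the controlled bad set. Neither step is hard, but these are the places where care is required.
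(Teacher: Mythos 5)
Your argument is correct and is precisely the standard dictionary between $S$-labeled Schreier-type graphs and tuples of permutations that the paper has in mind; the paper itself offers no proof, labeling the lemma a standard result. You also correctly isolate the only genuinely delicate points (the passage from $\mc{U}$-largeness to the tail condition via nested index sets and a subsequence, and the extension of the partial edge-maps on the good set to honest permutations), so nothing is missing.
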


For any function $f:\mb{N}\to \mb{N}$ and sofic representation $\pi: \Gamma\to \mc{S}$ given by $\pi(g)= (g_k)_{\mc{U}}$ for $g_k\in \mb{S}_{n_k}$, construct the amplification $\ds \pi^f: \Gamma\to \prod_{k\to \mc{U}} \mb{S}_{n_kf(k)}$ given by $\pi^f(g)= (g_k\otimes 1_{f(k)})_{\mc{U}}$ where the tensor product notation is understood by viewing the permutations as permutation matrices and considering the matricial tensor product. Let $\mc{S}^f$ denote $\ds\prod_{k\to \mc{U}} \mb{S}_{n_kf(k)}$. Observe that this maps the space of sofic representations of $\Gamma$ into $\mc{S}$, into the space of sofic representations of $\Gamma$ into $\mc{S}^f$.

\begin{dfn}
Let $E(A)$ denote the edge set in a colored graph $A$. Say that two colored graphs $A$ and $B$ are \emph{$r$-isomorphic} for some $r>0$ if there are subgraphs $A'\subseteq A$ and $B'\subseteq B$ such that $$|E(A')|\geq \left(1-\frac{1}{r}\right)|E(A)|\ \ and \ \ |E(B')|\geq \left(1-\frac{1}{r}\right)|E(B)|$$ and $A'$ is isomorphic to $B'$ as colored graphs. 
\end{dfn}

\begin{dfn}
A sofic approximation $\{G_k\}_{k\in \mb{N}}$ of $Cay(\Gamma, S)$ is called \emph{hyperfinite} if for all $0<\varepsilon<1$, there exists $K_\varepsilon \in \mb{N}$ such that for any $k \in \mb{N}$, there exists a way to erase $\varepsilon|E(G_k)|$ edges of $G_k$ to obtain a graph $G_k'$ having components of vertex size not greater than $K_\varepsilon$. 
\end{dfn}

\begin{thm}[\cite{elekszabo}]\label{nonhypsofrep}
Suppose $\Gamma$ is sofic and non-amenable, then a sofic approximation for $Cay(\Gamma,S)$ is a non-hyperfinite sofic approximation. 
\end{thm}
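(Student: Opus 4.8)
The plan is to establish the contrapositive-style connection between amenability of the group and hyperfiniteness of its sofic approximations. The statement asserts that for a non-amenable sofic group $\Gamma$, every sofic approximation of $Cay(\Gamma,S)$ fails to be hyperfinite. I would prove this by contraposition: I will show that if some sofic approximation $\{G_k\}$ \emph{is} hyperfinite, then $\Gamma$ must be amenable. This reduces the problem to a combinatorial characterization of amenability via Følner-type conditions on the approximating graphs.

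\textbf{Key steps.} First I would recall that amenability of $\Gamma$ is equivalent to the existence of Følner sets, i.e.\ finite subsets $F\subset \Gamma$ that are nearly invariant under the generating set $S$: $|sF\,\triangle\,F|/|F|$ small for all $s\in S$. Next, given a hyperfinite sofic approximation $\{G_k\}$, I would use the hyperfiniteness to decompose (for each fixed $\varepsilon>0$ and all large $k$) the graph $G_k$ into components of bounded vertex size $K_\varepsilon$ after erasing only $\varepsilon|E(G_k)|$ edges. The third step is the heart of the argument: since the $G_k$ are $r$-approximations of $Cay(\Gamma,S)$, most vertices have $r$-neighborhoods isomorphic to balls in the Cayley graph. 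By an averaging argument over the bounded-size components of $G_k'$, at least one component—pulled back through the local isomorphism with the Cayley graph—yields a finite subset of $\Gamma$ whose boundary is small relative to its size, precisely because erasing only an $\varepsilon$-fraction of edges forces the components to have small edge-boundary on average. Letting $\varepsilon\to 0$ and $r\to\infty$ produces an approximate Følner sequence, whence $\Gamma$ is amenable.

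\textbf{The main obstacle} I anticipate is the transfer step: converting the bounded-size graph components in $G_k'$ (which live in the finite symmetric-group data) into genuine Følner sets inside $\Gamma$ itself. The components are only \emph{locally} isomorphic to the Cayley graph on the good set $W$, and one must control the discrepancy between the edge-boundary of a component in $G_k'$ and the group-theoretic boundary of its image in $\Gamma$. This requires carefully choosing $r$ large relative to the diameter bound $K_\varepsilon$ so that each small component sits inside a region where the local isomorphism is faithful, and then arguing that the few edges erased (an $\varepsilon$-fraction globally) cannot be concentrated so as to spoil the boundary estimate for a positive-measure collection of components. A clean way to handle this is to phrase the whole argument through the hyperfinite-versus-Følner equivalence for graph sequences (as developed in the Elek--Szabó framework), so that the combinatorial hyperfiniteness of $\{G_k\}$ translates directly into the Følner condition for $\Gamma$; the delicate bookkeeping is then absorbed into that established equivalence.
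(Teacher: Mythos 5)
The paper does not prove this statement; it is quoted as a black-box result from \cite{elekszabo}, so there is no in-paper argument to compare against. Your sketch is the standard Elek--Szab\'o argument (contraposition: hyperfiniteness yields, by a Markov-type averaging over the bounded components and the fact that a component's edge-boundary in $G_k$ consists exactly of erased edges, a component of small relative boundary containing a good vertex, which pulls back through the local isomorphism with a Cayley ball of radius $r\geq K_\varepsilon$ to a F\o lner set), and the outline, including your identification of the transfer step as the delicate point, is correct.
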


The following is a routine observation:

\begin{lem}\label{conjlem} Suppose $\{G_k\}$ and $\{H_k\}$ are sofic approximations that induce sofic representations $\pi_1$ and $\pi_2$ respectively, so that $\pi_1$ is conjugate to $\pi_2$. Then, for any $r\geq 1$, there exists a subsequence $\left\{n_k\right\}$ such that for each $k\in \mb{N}$, $G_{n_k}$ is $r$-isomorphic to $H_{n_k}$.
\end{lem}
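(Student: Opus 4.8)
The plan is to translate conjugacy of the two sofic representations into agreement of the associated labeled graphs on all but a vanishing fraction of edges, and then to read off $r$-isomorphism along a set in $\mc{U}$. First I would unpack the hypothesis. Since $\pi_1$ and $\pi_2$ are conjugate in $\mc{S}=\prod_{k\to\mc{U}}\mb{S}_{n_k}$, there is an element $\sigma=(\sigma_k)_\mc{U}\in\mc{S}$ with $\pi_2=\sigma\pi_1\sigma^{-1}$. Evaluating this equality on the (finitely many) generators $s\in S$, which suffices since they carry the entire edge structure, gives
\[
\lim_{k\to\mc{U}} d_{n_k}\!\left(\pi_2(s)_k,\; \sigma_k\,\pi_1(s)_k\,\sigma_k^{-1}\right)=0 \qquad\text{for every } s\in S.
\]

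Next I would interpret conjugation by $\sigma_k$ graph-theoretically. The permutation $\sigma_k\pi_1(s)_k\sigma_k^{-1}$ is exactly the $s$-labeled edge map of the relabeled graph $\sigma_k(G_k)$, and $\sigma_k$ is a color-preserving isomorphism $G_k\xrightarrow{\sim}\sigma_k(G_k)$. Hence an $s$-edge $(j,\pi_2(s)_k(j))$ of $H_k$ coincides with the corresponding $s$-edge of $\sigma_k(G_k)$ precisely when $\pi_2(s)_k(j)=\sigma_k\pi_1(s)_k\sigma_k^{-1}(j)$. Since each graph has exactly $|S|\,n_k$ edges, the number of edges on which $H_k$ and $\sigma_k(G_k)$ disagree is $\sum_{s\in S} n_k\, d_{n_k}(\pi_2(s)_k,\sigma_k\pi_1(s)_k\sigma_k^{-1})$, so the fraction of disagreeing edges is
\[
\varepsilon_k:=\frac{1}{|S|}\sum_{s\in S} d_{n_k}\!\left(\pi_2(s)_k,\;\sigma_k\pi_1(s)_k\sigma_k^{-1}\right),
\]
which tends to $0$ along $\mc{U}$ by the previous step.

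Finally I would extract the subsequence. Fix $r\geq 1$. Because $\varepsilon_k\to 0$ along $\mc{U}$, the set $I_r:=\{k\in\mb{N}:\varepsilon_k<1/r\}$ lies in $\mc{U}$ and is therefore infinite; its increasing enumeration is the claimed subsequence $\{n_k\}$. For each $k\in I_r$, let $A'\subseteq H_k$ be the subgraph of edges on which $H_k$ and $\sigma_k(G_k)$ agree, and set $B':=\sigma_k^{-1}(A')\subseteq G_k$. Then $\sigma_k$ restricts to a color-preserving isomorphism $B'\cong A'$, while $|E(A')|=|E(B')|=|S|\,n_k(1-\varepsilon_k)>(1-1/r)|E(H_k)|$, with the identical bound for $G_k$. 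Taking $B'\subseteq G_k$ and $A'\subseteq H_k$ as the witnessing subgraphs exhibits $G_k$ and $H_k$ as $r$-isomorphic.

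The only point requiring any care—and the reason this is genuinely a routine observation—is the edge bookkeeping: one must ensure that the single common agreeing subgraph simultaneously retains at least a $(1-1/r)$-fraction of the edges of \emph{both} $H_k$ and $G_k$. This works automatically because the two graphs have equal edge counts $|S|\,n_k$ and the witnessing isomorphism $\sigma_k$ puts the agreeing subgraphs in bijective correspondence, so the two deficiency fractions are equal and both controlled by $\varepsilon_k$.
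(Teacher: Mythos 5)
Your argument is correct. The paper offers no proof of this lemma, dismissing it as ``a routine observation,'' so there is nothing to compare against; your write-up supplies exactly the routine argument intended: read the conjugator $\sigma=(\sigma_k)_\mc{U}$ off as a sequence of vertex bijections, note that the normalized Hamming distance $d_{n_k}\bigl(\pi_2(s)_k,\sigma_k\pi_1(s)_k\sigma_k^{-1}\bigr)$ counts precisely the fraction of $s$-labeled directed edges on which $H_k$ and the relabeled graph $\sigma_k(G_k)$ disagree, average over the finitely many generators, and use that a member of a free ultrafilter is infinite to extract the subsequence. The edge bookkeeping you flag at the end is handled correctly: both graphs have $|S|\,n_k$ directed edges and $\sigma_k$ carries the agreeing subgraph of $G_k$ bijectively onto that of $H_k$, so a single bound on $\varepsilon_k$ controls both deficiency fractions. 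The only point you silently elide is that the graphs $G_k$, $H_k$ are taken to be exactly the Schreier-type graphs of the permutation tuples $(\pi_i(s)_k)_{s\in S}$; in general the passage from a sofic approximation to a sofic representation may alter a vanishing fraction of edges, but this discrepancy is $o(1)$ along $\mc{U}$ and can be absorbed into $\varepsilon_k$, and in the paper's actual application of the lemma the graphs are built directly from the permutations, so nothing is lost.
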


We are now ready to state and prove the main result of this section.

\begin{thm}
Let $\Gamma$ be a countable sofic group. Let $\alpha \in \text{Aut}(\Gamma \times \Gamma)$ be the flip automorphism given by $\alpha((g,h)) = (h,g)$. The following are equivalent:
    \begin{enumerate}
        \item $\Gamma$ is amenable;
        \item For any universal sofic group $\mc{S}$, any two sofic representations $\pi_1,\pi_2: \Gamma \to \mc{S}$ are conjugate;
        \item For any universal sofic group $\mc{S}$, and any sofic representation $\pi:\Gamma\times\Gamma \to \mc{S}$, $\pi$ and $\pi\circ \alpha$ are conjugate;
        \item For universal sofic group $\mc{S}$ and any sofic representation $\pi: \Gamma\times \Gamma \to \mc{S}$, $\pi|_{\Gamma \times 1}$ and $\pi\circ\alpha|_{\Gamma \times 1}$ are conjugate.
    \end{enumerate}
\end{thm}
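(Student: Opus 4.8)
The plan is to isolate $(4)\Rightarrow(1)$ as the only implication requiring new ideas and to dispatch the rest using Elek--Szabo's theorem. The equivalence $(1)\Leftrightarrow(2)$ is precisely the sofic analog of Jung's theorem from \cite{elekszabo}: an amenable group has a unique sofic representation up to conjugacy, and conversely uniqueness forces amenability. For $(1)\Rightarrow(3)$ I would observe that if $\Gamma$ is amenable then so is $\Gamma\times\Gamma$; applying $(1)\Rightarrow(2)$ to $\Gamma\times\Gamma$ shows that \emph{any} two sofic representations of $\Gamma\times\Gamma$ into $\mc{S}$ are conjugate, in particular $\pi$ and $\pi\circ\alpha$. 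The implication $(3)\Rightarrow(4)$ is immediate: if $\sigma\in\mc{S}$ conjugates $\pi$ to $\pi\circ\alpha$, then the same $\sigma$ conjugates $\pi|_{\Gamma\times 1}$ to $\pi\circ\alpha|_{\Gamma\times 1}$. Thus everything reduces to $(4)\Rightarrow(1)$.

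To prove $(4)\Rightarrow(1)$ I would argue by contraposition, assuming $\Gamma$ is non-amenable and manufacturing a sofic representation $\pi\colon\Gamma\times\Gamma\to\mc{S}$ whose restrictions $\pi|_{\Gamma\times 1}$ and $\pi|_{1\times\Gamma}$ are not conjugate. The device for producing commuting representations is the product action: given sofic representations $\rho_1\colon\Gamma\to\prod_{k\to\mc{U}}\mb{S}_{n_k}$ and $\rho_2\colon\Gamma\to\prod_{k\to\mc{U}}\mb{S}_{m_k}$, set $\pi(g,h)=(\rho_1(g)_k\otimes\rho_2(h)_k)_\mc{U}\in\prod_{k\to\mc{U}}\mb{S}_{n_km_k}$, viewing permutations as permutation matrices. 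A short fixed-point count shows $\pi$ is again a sofic representation, and by construction $\pi|_{\Gamma\times 1}$ is the amplification of $\rho_1$ by $m_k$ while $\pi|_{1\times\Gamma}$ is the amplification of $\rho_2$ by $n_k$. At the level of the sofic approximations of Lemma \ref{conjlem}, the graph attached to $\pi|_{\Gamma\times 1}$ is the disjoint union of $m_k$ copies of $G_k$ and that attached to $\pi|_{1\times\Gamma}$ is the disjoint union of $n_k$ copies of $H_k$, where $\{G_k\}$ and $\{H_k\}$ induce $\rho_1$ and $\rho_2$; since the Cayley approximations are essentially $|S|$-regular these two amplified graphs carry the same number of edges.

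The input distinguishing $\rho_1$ from $\rho_2$ comes from Theorem \ref{nonhypsofrep}: non-amenability forces every sofic approximation of $Cay(\Gamma,S)$ to be non-hyperfinite, and the heart of the Elek--Szabo argument upgrades this to a pair of approximations $\{G_k\},\{H_k\}$ of $Cay(\Gamma,S)$ that fail to be $r_0$-isomorphic along every subsequence for some fixed $r_0$; by the contrapositive of Lemma \ref{conjlem} the induced representations $\rho_1,\rho_2$ are not conjugate. Now suppose toward a contradiction that $(4)$ holds. Then $\pi|_{\Gamma\times 1}$ and $\pi|_{1\times\Gamma}$ are conjugate, so by Lemma \ref{conjlem} the amplified graphs $m_k\cdot G_k$ and $n_k\cdot H_k$ are $r$-isomorphic along a subsequence for \emph{every} $r$. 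The crucial step is a robustness-under-amplification lemma: because both graphs are disjoint unions of identical blocks and an $r$-isomorphism preserves a $(1-1/r)$-fraction of edges, an averaging argument over the blocks produces a single copy of $G_k$ that is $r'$-isomorphic to a single copy of $H_k$, with $r'\to\infty$ as $r\to\infty$. Taking $r$ large enough to force $r'\ge r_0$ contradicts the choice of $\{G_k\},\{H_k\}$. Hence $(4)$ fails and $\Gamma$ is amenable.

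The main obstacle is precisely this last robustness step, and it is exactly the point flagged above as the failure of a self-absorbing trick: one cannot simply import the two non-conjugate representations from the non-commuting theorem, because forcing them to commute requires passing to an amplification, and amplification could a priori collapse conjugacy classes. The work is therefore to verify that Elek--Szabo's non-$r_0$-isomorphism is a sufficiently local, edge-counting phenomenon that disjoint-copy amplification cannot repair it; making the averaging over blocks quantitative---tracking how $r'$ degrades in terms of $r$ and the block multiplicities $m_k,n_k$---is the technical core of the argument.
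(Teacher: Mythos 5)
Your reduction to $(4)\Rightarrow(1)$ and your device for producing commuting representations (the tensor/product construction, whose restrictions are amplifications of two sofic representations $\rho_1,\rho_2$) match the paper exactly, as does the plan to distinguish the restrictions via Lemma \ref{conjlem} and graph-theoretic properties of the associated sofic approximations. The gap is precisely the step you flag as the ``technical core'' and do not carry out: the robustness-under-amplification lemma. As stated it does not work. If $\bigsqcup_{i} G_k$ is $r$-isomorphic to $\bigsqcup_i H_k$, an averaging argument does give you one block of $G_k$ retaining a $(1-1/r)$-fraction of its edges, but the graph isomorphism on the surviving subgraph has no reason to respect the block structure on the other side: the components of that single $G_k$-block can be scattered across many distinct $H_k$-blocks, so you cannot extract a single-block-to-single-block $r'$-isomorphism. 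Starting from an arbitrary non-$r_0$-isomorphic pair $\{G_k\},\{H_k\}$ and hoping disjoint-union amplification preserves non-$r$-isomorphism is therefore not a viable route without substantially more structure.

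The paper supplies that structure by choosing the two approximations so that a \emph{monotone} invariant separates them and manifestly survives amplification: the maximum component size after deleting an $\varepsilon$-fraction of edges. Concretely, starting from a non-hyperfinite approximation $\{G_k\}$ one takes $\rho_1$ from the ``big'' graphs $G_{f(k)}$ (non-hyperfiniteness, via Theorem \ref{nonhypsofrep}, guarantees a component with more than $k$ vertices survives the deletion of $\varepsilon f(k)$ edges) and $\rho_2$ from $H_k$, a disjoint union of copies of $G_k$, all of whose components have at most $k$ vertices. After amplification, a pigeonhole argument shows that erasing a $(1/r)$-fraction of the edges of $\bigsqcup_{i=1}^{f(k)}G_{f(k)}$ leaves some copy of $G_{f(k)}$ with at most $\varepsilon f(k)$ edges erased, hence a component with more than $k$ vertices; since a graph isomorphism sends components to components and every component of any subgraph of $\bigsqcup_i H_k$ has at most $k$ vertices, no $r$-isomorphism with $r=1/\varepsilon$ can exist. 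You should replace your generic robustness lemma with this specific choice of $\{G_{f(k)}\}$ versus $\{H_k\}$ and the component-size argument; the rest of your outline then goes through.
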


\begin{proof}
It suffices to show (4) implies (1). Suppose that $\Gamma$ is non-amenable.  We will produce non-conjugate sofic representations of $\Gamma$. Let $\{G_k\}$ be a non-hyperfinite sofic approximation for $Cay(\Gamma,S)$, with $|V(G_k)|= k$.  By Theorem \ref{nonhypsofrep} there exists $0<\varepsilon<1$ with the following property:
\begin{itemize}
\item[\eqnum\label{eprop}] for every $k\in \mb{N}$, there exists $f(k)> k$ such that if one erases $\varepsilon |E(G_{f(k)})| = \varepsilon f(k)$ edges from $G_{f(k)}$, there is always a component whose vertex set has cardinality strictly greater than $k$.
\end{itemize}
Consider the sofic approximation $\{H_k\}_{n=1}^{\infty}$ where $|V(H_k)|= f(k)$ and $H_k$ is the disjoint union of $\floor{\frac{f(k)}{k}}$ copies of $G_k$ with the remainder being isolated points. Put $\mc{S}: =\prod_{k\to \mc{U}} \mb{S}_{f(k)}$.  Let $\pi_1,\pi_2: \Gamma \rightarrow \mc{S}$ be the sofic representations associated to $\{G_{f(k)}\}$ and $\{H_k\}$ respectively. Now construct sofic representations $\pi_1\otimes 1, 1\otimes \pi_2:\Gamma \to \mc{S}^f$ as follows: for $g\in \Gamma$ define $(\pi_1\otimes 1)(g)= (g_k\otimes 1_{\mb{S}_{f(k)}})$ where $\pi_1(g)= (g_k)_{\mc{U}}, g_k\in \mb{S}_{f(k)}$, and similarly for $h\in \Gamma$, define $(1\otimes \pi_2)(h)= (1_{\mb{S}_{f(k)}}\otimes h_k)$ where $\pi_2(h) = h_k, h_k \in \mb{S}_{f(k)}$. By construction, $\pi_1\otimes 1$ commutes with $1\otimes \pi_2$ and so there exists a sofic representation $\pi: \Gamma\times \Gamma \to \mc{S}^f$ such that $\pi_1\otimes 1= \pi|_{\Gamma\times 1}$ and $1\otimes \pi_2= \pi\circ \alpha|_{\Gamma\times 1}$. 

We now show that $\pi_1\otimes 1$ and $1\otimes \pi_2$ are not conjugate. Consider the sofic approximations associated to $\pi_1\otimes 1$ and $1\otimes \pi_2$ given by $\{\bigsqcup_{i=1}^{f(k)} G_{f(k)}\}$ and  $\{\bigsqcup_{i=1}^{f(k)} H_k\}$ respectively. By Lemma \ref{conjlem}, it suffices to show that there exists $r \geq 1$ such that for all $k$, $\bigsqcup_{i=1}^{f(k)} G_{f(k)}$ is not $r$-isomorphic to $\bigsqcup_{i=1}^{f(k)} H_k$. Fix $r= 1/\varepsilon$ and $k \in \mb{N}$. Let $G'$ be a subgraph of $\bigsqcup_{i=1}^{f(k)} G_{f(k)}$ with $|E(G')| \geq \left(1-\frac{1}{r}\right) \left|E\left(\bigsqcup_{i=1}^{f(k)} G_{f(k)}\right)\right|.$  We will show that $G'$ cannot be isomorphic to any subgraph $H'$ of $\bigsqcup_{i=1}^{f(k)} H_k$ obtained by erasing any number of edges. Indeed, $G'$ is obtained from $\bigsqcup_{i=1}^{f(k)} G_{f(k)}$ by erasing at most $\varepsilon |E(\bigsqcup_{i=1}^{f(k)} G_{f(k)})|= \varepsilon f(k)^2$ edges so it follows that there exists at least one copy of $G_{f(k)}$ in $\bigsqcup_{i=1}^{f(k)} G_{f(k)}$ with at most $\varepsilon f(k)$ edges removed. Since $\varepsilon$ satisfies \eqref{eprop}, there exists a component of $G'$ with more than $k$ vertices. On the other hand all components of $\bigsqcup_{i=1}^{f(k)} H_k$  have no more than $k$ vertices because $\bigsqcup_{i=1}^{f(k)} H_k$ is a disjoint union of $H_k$'s which are each in turn a disjoint union of $G_k$'s with $|G_k|=k$ (and possibly some isolated vertices). So if $H'$ is a subgraph of $\bigsqcup_{i=1}^{f(k)} H_k$ obtained by erasing edges, there is no component of $H'$ with more than $k$ vertices, and thus $H'$ cannot be isomorphic to $G'$. Hence $\bigsqcup_{i=1}^{f(k)} G_{f(k)}$ is not $r$-isomorphic to  $\bigsqcup_{i=1}^{f(k)} H_k$.  Thus $\pi_1\otimes 1$ and $1\otimes \pi_2$ are not conjugate.
\end{proof}


\section{Concluding remarks for the group setting}\label{groups}

The property of stability is based on the following general philosophy: anything that \tql almost satisfies\tqr\, a property, is \tql close\tqr\, to something that precisely satisfies that property. There has been a lot of interest in recent years about the notion of group stability, see \cite{beluth,beclub,arzpau,deglluth,glebriv}. 

\begin{dfn}[\cite{beluth} Definition 1.1] Let $\mc{G}$ be a class of groups equipped with bi-invariant metrics,  and let $\Gamma$ be a finitely presented group with generators \linebreak $\{s_1,\hdots, s_m\}$ and relations $\{w_1,\hdots, w_r\}$. We say $\Gamma$ is \emph{$\mc{G}$-stable} if given any group $G \in \mc{G}$ with bi-invariant metric $d_G$, for every $\varepsilon>0$ there exists $\delta>0$ such that if $(g_1,\hdots, g_m)\in G^m$ satisfies $d_G(w_j(g_1,\dots, g_m),id_{G})<\delta$ for every $1 \leq j \leq r$  then there is a homomorphsim $\pi: \Gamma \rightarrow G$ with $d_G(\pi(s_j),g_j)<\varepsilon$ for $1 \leq j \leq m$.
\end{dfn}


\noin Typical examples of the class $\mc{G}$ are the symmetric groups $S_n$ with Hamming metric (this class denoted by $\mc{P}$) and the unitary groups $\mc{U}(\mb{M}_n)$ with the normalized Hilbert-Schmidt distance (this class denoted by $HS$). Taking a cue from Remark \ref{finitary}, we can expand the above finitary definition of stability to apply to groups with infinitely many generators and relations as follows.

\begin{dfn}\label{gpstab}
 Let $\mc{G}$ be a class of groups equipped with bi-invariant metrics, and let $\Gamma$ be a group with generators $\left\{s_1,s_2,\dots\right\}$ and relations $\left\{w_1,w_2,\dots\right\}$.  
 We have that $\Gamma$ is \emph{$\mc{G}$-stable} if given any group $G \in \mc{G}$ with bi-invariant metric $d_G$, for any $\varepsilon > 0$ and $n \in \mb{N}$, there are $m>n$ and $k \in \mb{N}$ such that if $(g_1,\dots,g_m) \in G^m$ is an $m$-tuple such that $d_G(w_j(g_1,\dots,g_m),\text{id}_{G}) < k^{-1}$ for every $1\leq j \leq m$ then there is a homomorphism $\pi: \Gamma \rightarrow G$ such that $d_G(\pi(s_j),g_j) < \varepsilon$ for every $1 \leq j \leq m$.  
\end{dfn}

Looking at our Definition \ref{ssdef}, we see that one can give a definition of group stability more in keeping with the ultraproduct language of this article.  

\begin{dfn}
Let $\mc{G}$ be a class of groups equipped with bi-invariant metrics, and let $\mc{U}$ be a free ultrafilter on $\mb{N}$.  A countable discrete group $\Gamma$ is \emph{$\mc{G}$-stable} if for any sequence of groups $G_k \in \mc{G}$ with respective bi-invariant metrics $d_k$ and any homomorphism $\ds\pi: \Gamma \rightarrow  \prod_{k\rightarrow \mc{U}}(G_k,d_k)$ (metric ultraproduct), there are homomorphisms $\pi_k: \Gamma \rightarrow G_k$ such that $\pi(g) = (\pi_k(g))_\mc{U}$ for every $g \in \Gamma$.
\end{dfn}

We define self-stability for groups as follows.

\begin{dfn}
Let $\Gamma$ be a countable discrete group. For a bi-invariant metric $d$, consider the metric ultrapower group  $(G,d)^\mc{U}$. The group $(G,d)$ is said to be \emph{self-stable} if any homomorphsim $\pi:G\to (G,d)^\mc{U}$ lifts into homomorphisms $\pi_k:G\to G$ such that $\pi(g)=(\pi_k(g))_{\mc{U}}$ for all $g\in G$. 
\end{dfn}

Many approximation properties for groups (e.g., amenability, Haagerup property, property (T)) have von Neumann algebraic counterparts such that a group satisfying that property is equivalent to its corresponding von Neumann algebra satisfying that property.  That is, $\Gamma$ is amenable if and only if $L(\Gamma)$ is amenable. The following example shows that this is not the case for self-stability.  Note that there are \textit{a priori} many choices for a bi-invariant metric on a given group $\Gamma$; but the appropriate choice for this comparison to make sense is the metric induced on the group $\Gamma$ when $\Gamma$ is considered as a subset of $\mc{U}(L(\Gamma))$ under the trace-norm.

\begin{exmpl}
Fix $n \in \mb{N}$. Let $\Gamma = \mb{F}_n$, and let $d$ be the metric induced on $\mb{F}_n$ when considered a subset of $\mc{U}(L(\mb{F}_n))$ under the trace-norm.  Then it is a direct observation that $(\mb{F}_n,d)$ is self-stable.
\end{exmpl}

\noin Hence $\mb{F}_n,$ is self-stable, but $L(\mb{F}_n)$ is not self-tracially stable by Theorem \ref{tschar}.  The universality of $\mb{F}_n$ is the property that yields self-stability. Note that when passing to its von Neumann algebra, the universality of $\mb{F}_n$ is lost due to the fact that $L(\mb{F}_n)$ is a II$_1$-factor and therefore is simple.  Thus it is of interest to find additional hypotheses on a group $\Gamma$ which, in conjunction with self-stability, imply amenability.


We also take this opportunity to consider a group-theoretic analog of Corollary \ref{jungv2cor}. As we have seen in the previous section, Elek-Szabo proved in \cite{elekszabo} that up to conjugacy there is exactly one sofic approximation of a finitely generated group $\Gamma$ if and only if $\Gamma$ is amenable. It is natural to ask if one can achieve a generalization similar to the ucp conjugation result of this paper.  We can ask for a hyperlinear analog: 


\begin{?}
Let $\Gamma$ be a countable discrete hyperlinear group such that for any two hyperlinear approximations $\pi,\rho:\Gamma \to \mc{U}(R^\mc{U})$ there exists a sequence of positive definite functions $\varphi_k: \mc{U}(R)\to \mc{U}(R) \subset R$ such that $\pi(g)= (\varphi_k)_{\mc{U}}\circ\rho(g)$ for every $g \in \Gamma$; then does it follow that $\Gamma$ is amenable?
\end{?}

\noin And we can ask for a sofic analog:

\begin{?}
Let $\Gamma$ be a countable discrete sofic group such that for any two sofic representations $\pi,\rho:\Gamma \to \prod_{k\to\mc{U}} \mb{S}_{n_k}$ there exists a sequence of positive definite functions $\varphi_k: \mb{S}_{n_k} \to \mb{S}_{n_k} \subset \mb{M}_{n_k}$ such that $\pi(g)= (\varphi_k)_{\mc{U}}\circ\rho(g)$ for every $g \in \Gamma$; then does it follow that $\Gamma$ is amenable?
\end{?}



\bibliographystyle{plain}
\bibliography{ssbib}{}

\end{document}